\documentclass[11pt]{article}
\usepackage[utf8]{inputenc}
\usepackage[normalem]{ulem}
\usepackage{todonotes,setspace}
\usepackage{float}
\usepackage{comment}
\usepackage{amsmath,amsthm,amsfonts,color}
\usepackage{subcaption}
\usepackage{amssymb}
\usepackage{graphicx}
\usepackage{setspace}
\usepackage{mathrsfs}
\usepackage{graphicx}
\usepackage[table]{xcolor}
\usepackage{colortbl}
\usepackage[shortlabels]{enumitem}
\setlist[itemize]{topsep=3pt, partopsep=6pt, itemsep=1pt, parsep=1pt}

\usepackage{calc,cite}
\usepackage{tikz}
\usepackage{geometry}
\usepackage{pgffor}
\usepackage{ifthen}
\usetikzlibrary{arrows.meta}
\usetikzlibrary{decorations.markings}
\tikzstyle{vertex}=[circle, draw, inner sep=1.2pt, minimum size=3pt]
\tikzstyle{filledvertex}=[circle, draw, fill, inner sep=1.5pt, minimum size=3pt]

\newcommand{\modthree}[1]{\mathrm{(}#1 \mathrm{\ mod\  3)}}

\newtheorem{theorem}{Theorem}[section]
\newtheorem{conjecture}[theorem]{Conjecture}
\newtheorem{lemma}[theorem]{Lemma}

\newtheorem{fact}[theorem]{Fact}
\newtheorem{case}{Case}

\newtheorem{subcase}{Subcase}[case]

\newtheorem{claim}{Claim}
\newtheorem{subclaim}{Subclaim}[claim]

\counterwithin*{case}{section}

\begin{document}
\begin{spacing}{1.15}

\title{On 2-connected graphs without cycles of length 1 modulo 3}

\author{Yandong Bai, Hojin Chu, Binlong Li, Boram Park, Homoon Ryu}

\author{Yandong Bai
\thanks{School of Mathematics and Statistics, 
Xi’an--Budapest Joint Research Center for Combinatorics, Northwestern Polytechnical University, Xi'an 710129, China;
Research $\&$ Development Institute of Northwestern Polytechnical University in Shenzhen, Shenzhen 518057, China. Email: {\tt bai@nwpu.edu.cn}.}
\and Hojin Chu
\thanks{School of Computational Sciences, Korea Institute for Advanced Study, Seoul 02455, Republic of Korea. Email: {\tt hojinchu@kias.re.kr}.}
\and Binlong Li
\thanks{School of Mathematics and Statistics, Xi’an--Budapest Joint Research Center for Combinatorics, Northwestern Polytechnical University,
Xi'an 710129, China. Email: {\tt binlongli@nwpu.edu.cn}.}
\and Boram Park
\thanks{Department of Mathematics Education, Seoul National University, Seoul 08826, Republic of Korea. Email: {\tt borampark@snu.ac.kr}.}
\and Homoon Ryu
\thanks{Department of Mathematics Education, Seoul National University, Seoul 08826, Republic of Korea. Email: {\tt rhm95@snu.ac.kr}.}
}

\date{}

\maketitle

\begin{abstract}
Burr and Erd\H{o}s conjectured in 1976 that for all integers $k>\ell\geq 0$ such that $k\mathbb{Z}+\ell$ contains an even integer, every $n$-vertex graph without cycles of length $\ell$ modulo $k$ has at most a linear number of edges in $n$. Bollob\'{a}s confirmed the conjecture in 1977, and Erd\H{o}s further asked for the exact extremal number. To the best of our knowledge, this problem has been solved only for all residues when $k\leq 4$, and for $\ell\in \{0,2\}$ when $k\geq 5$ is odd. In particular, Bai {\it et al.} [arXiv:2503.03504] proved that if $G$ is an $n$-vertex graph with no cycles of length $1$ modulo $3$, then $e(G)\le \frac{5}{3}(n-1)$, and when $9\mid (n-1)$ the equality holds if and only if each block of $G$ is isomorphic to the Petersen graph. Note that for $n> 18$ every extremal graph contains a cut-vertex. In this paper, we investigate the 2-connected setting and determine the maximum number of edges in a 2-connected graph with no cycles of length $1$ modulo $3$. Our results provide a sharp extremal bound and a complete characterization of the extremal graphs, revealing structural differences from the general case. Combining this with the result of Bai {\it et al.}, we also obtain a complete characterization of all extremal graphs in the general setting, including the cases where $9\nmid (n-1)$. Finally, we determine the maximum number of edges in a $2$-connected graph with no cycles of length $2$ modulo $4$, whose extremal graphs differ substantially from those in the general setting. Consequently, the extremal numbers for $2$-connected graphs with no cycle of a fixed length modulo $k$ are now determined for all $k\leq 4$.
\end{abstract}

\section{Introduction}

For a graph $G$, we denote the sizes of $V(G)$ and $E(G)$ by $n(G)$ and $e(G)$, respectively. All graphs are simple in this paper. By an ($\ell$ mod $k$)-cycle (resp. ($\ell$ mod $k$)-path), we mean a cycle (resp. path) of length $\ell$ modulo $k$.  Let $\mathcal{C}_{\ell\bmod k}$ be the set of all $(\ell\bmod k)$-cycles.
For a family $\mathcal{F}$ of graphs, we denote by $ex(n,\mathcal{F})$ the maximum number of edges in an $n$-vertex graph containing no member of $\mathcal{F}$ as a subgraph. 
Determining $ex(n,\mathcal{C}_{\ell\bmod k})$ has attracted extensive attention in extremal graph theory.

Burr and Erd\H{o}s \cite{BurrErdos} conjectured in 1976 that such an $n$-vertex graph can contain at most a linear number of edges in $n$. Bollob\'{a}s \cite{B77} gave a positive answer to this conjecture. Erd\H{o}s then asked what the exact value of the maximum number of edges in such a graph is.

Let $G$ be an $n$-vertex graph with no $(\ell \bmod k)$-cycles for some integers $k>\ell\geq 0$ such that $k\mathbb{Z}+\ell$ contains an even integer. Then $k$ is odd or $\ell$ is even. The cases $k\leq 2$ are somewhat trivial. 
If $(k, \ell)=(1,0)$, or equivalently, $G$ has no cycles, then one can see that $e(G)\leq n-1$ and the equality holds if and only if $G$ is a tree. A famous result states that if $(k, \ell)=(2,0)$, then $e(G)\leq \frac{3}{2} (n-1)$ and, for $2\mid (n-1)$, the equality holds if and only if each block of $G$ is a triangle. 

The first non-trivial case is $k=3$.
Chen and Saito~\cite{CS94} showed that if $(k, \ell)=(3,0)$ then $e(G)\leq 2(n-2)$ for $n\geq 3$ 
and the equality holds if and only if $G$ is isomorphic to $K_{2,n-2}$. Dean {\it et al.} \cite{D+91}
and, independently, Saito \cite{Sai92} showed that for $n\geq 5$ if $(k, \ell)=(3,2)$, then $e(G)\leq 3(n-3)$ and the equality holds if and only if $G$ is isomorphic to $K_{3,n-3}$. 
Bai {\it et al.} \cite{B+202503} showed that if $(k, \ell)=(3,1)$, then $e(G)\leq \frac{5}{3} (n-1)$ and, for $9\mid (n-1)$, the equality holds if and only if each block of $G$ is a Petersen graph.

\begin{theorem}[Bai {\it et al.} \cite{B+202503}]\label{thm:1mod3}
For $n\geq 1$,
\[
ex(n,\mathcal{C}_{1\bmod 3})
=15q+\left\lfloor \frac{3}{2}r \right\rfloor,~
\text{where~}n-1=9q+r,~0\leq r\leq 8.
\]
Moreover, if $9 \mid (n-1)$, then the extremal graphs are those in which each block is a Petersen graph.
\end{theorem}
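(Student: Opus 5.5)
The argument splits into a construction for the lower bound and an induction on blocks for the upper bound. Throughout, write $g(m)=15q+\lfloor \frac32 r\rfloor$ where $m=9q+r$ and $0\le r\le 8$; the claim is $ex(n,\mathcal{C}_{1\bmod 3})=g(n-1)$.

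\textbf{Lower bound.} I build a connected graph whose blocks are arranged in a tree-like fashion, glued at cut-vertices. It uses $q$ copies of the Petersen graph, each contributing $9$ new vertices and $15$ edges. It also uses $\lfloor r/2\rfloor$ triangles, each contributing $2$ new vertices and $3$ edges. If $r$ is odd, I add one pendant edge. Every cycle lies inside a single block. The Petersen graph has cycles of lengths $5,6,8,9$ only, none congruent to $1 \bmod 3$, and a triangle has length $0 \bmod 3$. The total edge count is exactly $g(n-1)$.

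\textbf{Upper bound.} Since every cycle lies in a block, it suffices to prove two things.

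\emph{(a) Block bound.} Every block $B$ on $b$ vertices with no $(1\bmod 3)$-cycle satisfies $e(B)\le g(b-1)$. Moreover, $e(B)\le \frac53(b-1)$, with equality only when $B$ is the Petersen graph.

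\emph{(b) Superadditivity.} $g(x)+g(y)\le g(x+y)$ for all $x,y\ge 0$. Writing $g(m)=\frac53 m-c(r)$ with $c(r)=\frac53 r-\lfloor \frac32 r\rfloor\ge 0$, this reduces to the finite check $c(r_1)+c(r_2)\ge c(r_1+r_2 \bmod 9)-15\cdot[\,r_1+r_2\ge 9\,]+\frac53\cdot 9\cdot[\,r_1+r_2\ge 9\,]$, which I would verify by a table over $0\le r_1,r_2\le 8$.

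Summing (a) over the blocks, using $\sum_B (b-1)=n-1$, and applying (b) gives $e(G)\le g(n-1)$. When $9\mid(n-1)$, equality forces every block to attain $\frac53(b-1)$, since $c(0)=0$ and any block with slack $c>0$ cannot be compensated. Hence every block is the Petersen graph.

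\textbf{Proof of (a).} This is the heart of the argument and the step I expect to be the main obstacle. I would take a minimal counterexample $B$ and reduce step by step.

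First, I would show $B$ is $3$-edge-dense enough to have minimum degree $3$. If $B$ has a maximal path of degree-$2$ vertices (an ear), I remove the ear's interior vertices. Either the rest stays $2$-connected, so induction gives a bound and the ear adds at most $1$ edge per new vertex, which is below $\frac53$. Or the rest splits, and I apply the block argument to it instead.

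Next, I would handle a $2$-cut $\{u,v\}$. I split $B$ along it into two sides, and on each side replace the other side by a $u$–$v$ path. I choose the path's length according to the residues of $u$–$v$ path lengths realizable through that other side. The key sublemma is that in a $2$-connected graph with no $(1\bmod 3)$-cycle, these residues are very restricted: if two $u$–$v$ paths have lengths $\ell_1,\ell_2$, then combining paths across the cut forbids certain sums $\ell_1+\ell_2\equiv 1$. This makes each side a smaller $(1\bmod 3)$-cycle-free $2$-connected graph, and I add up the bounds from induction.

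Finally, for $3$-connected $B$ with minimum degree $\ge3$, I would run a structural analysis around a shortest cycle and its neighbourhood. The target is to show that either $B$ has few edges or $B$ is the Petersen graph. I would first rule out short cycles of lengths $3$, $4$ and $7$ (note $4\equiv 7\equiv 1$), then use $5$-cycles and the forbidden residues to pin down the local structure and force the Petersen graph. This last case analysis, together with getting the small-$b$ values $g(b-1)$ exactly right, is where I expect most of the difficulty.
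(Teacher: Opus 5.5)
Your lower-bound construction is fine. The superadditivity of $g$ is also fine: the check collapses to $c(r_1)+c(r_2)\ge c((r_1+r_2)\bmod 9)$, since $\frac53\cdot 9=15$. Reducing the theorem to the block statement (a) is correct. (The paper itself only cites this theorem, so the comparison below is with the tools it imports from the cited proof.)

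The gap is in (a), at the step meant to give minimum degree $3$. Deleting the interior of a thread with $k$ internal vertices removes $k$ vertices and $k+1$ edges, not ``at most one edge per vertex''.
\begin{itemize}
\item For $k\ge 2$ this is affordable, since $k+1\le\frac53 k$ and $g(m)-g(m-k)\ge g(k)\ge k+1$.
\item For $k=1$ you remove one vertex and two edges, and $2>\frac53$. Also $g(m)-g(m-1)=1$ whenever $m\bmod 9\in\{1,3,5,7\}$.
\end{itemize}
So a degree-$2$ vertex whose two neighbours both have degree at least $3$ cannot be removed by induction, and you cannot assume $\delta(B)\ge 3$. Your $2$-cut step does not rescue this. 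The two neighbours of such a vertex form a $2$-cut with a trivial side, and only essential $2$-cuts can be split usefully. The edge accounting in that step is also left unchecked. For the exact bound $g$ it is delicate when the replacing path has length $2$ or $3$, or when $uv\in E(B)$.

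This is not a technicality; it is where the content of the theorem lies. Once all $3$-threads and essential $2$-cuts are gone, you are left with essentially $3$-connected blocks whose degree-$2$ vertices form an independent set, i.e.\ simple subdivisions of $3$-connected graphs. Many of these are $(1\bmod 3)$-cycle-free without being the Petersen graph: $H_{10}^-$, $H_7$, $H_8$, $H_9$, $H_{11}$, $K_4^*$, $\hat W_4$, $K'_{3,3}$, and the infinite family $\hat K^{(i)}_{3,p}$. Your shortest-cycle analysis for $3$-connected graphs of minimum degree $3$ never meets them. So both the bound and the uniqueness of the Petersen graph in the equality case remain unproved for exactly these blocks.

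The tools collected in Section~\ref{sec:lemmas and theorems} are built for this situation:
\begin{itemize}
\item Lemma~\ref{lemma: 2 paths mod 3} only requires $N_2(G)\setminus\{x,y\}$ to be independent. So across an essential $2$-cut each side yields two residues of path lengths, hence a $(1\bmod 3)$-cycle.
\item Consequently a minimal counterexample with no $3$-thread is essentially $3$-connected.
\item One then splits on whether it has two disjoint cycles, which forces $H_{10}$ or $H_{10}^-$ by Lemma~\ref{lem:H10:H10-}.
\item If it has no two disjoint cycles, Lov\'asz's theorem and the classification of simple subdivisions apply, as in Lemma~\ref{lemma: no 2 cycles}, with edge counts checked in each case.
\end{itemize}
Your residue-combination idea across a $2$-cut is the germ of Lemma~\ref{lemma: 2 paths mod 3}. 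You still need it in a form that tolerates isolated degree-$2$ vertices, together with a genuine argument for the subdivided $3$-connected case.
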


For the case $k=4$, since $k\mathbb{Z}+\ell$ contains an even integer, it suffices to consider $\ell=0$ or $\ell=2$. Gy\H{o}ri {\it et al.} \cite{G+23} showed that if $(k, \ell)=(4,0)$, then $e(G)\leq \frac{19}{12} (n-1)$ and they also constructed a family of graphs attaining this bound. Gao {\it et al.} \cite{G+24} showed that if $(k, \ell)=(4,2)$, then $e(G)\leq \frac{5}{2} (n-1)$ and, for $4\mid (n-1)$, the equality holds if and only if each block of $G$ is $K_5$.


For general $k\geq 3$, Bai {\it et al.} \cite{B+202511} showed that $e(G)\leq (k-1)(n-k+1)$ if $\ell=0$, and the bound is sharp when $k$ is odd and $n\geq 2k-3$. Their method also implies that $e(G)\leq k(n-k)$ if $\ell=2$, and the bound is sharp when $k$ is odd and $n\geq 2k$. It is mentioned that the latter result can also be obtained from the results of Gao {\it et al.} \cite{G+22}.

Let $ex_2(n,\mathcal{F})$ be the maximum number of edges in a $2$-connected $n$-vertex graph with no graphs in $\mathcal{F}$. It is not difficult to see that if a 2-connected graph contains no even cycles, then it is an odd cycle. So, $ex_2(n, \mathcal{C}_{0 \bmod 2})=n$ when $n\ge 3$ is odd and it is meaningless when $n$ is even. By examining the characterization of extremal graphs for $ex(n,\mathcal{C}_{\ell\bmod k})$ when $k\in \{3,4\}$, one can observe that $ex_2(n,\mathcal{C}_{\ell\bmod k}) = ex(n,\mathcal{C}_{\ell\bmod k})$ for $(\ell,k)\in \{(0,3),(2,3)\}$ for large $n$, whereas the two parameters differ for $(\ell,k)\in \{(1,3),(0,4),(2,4)\}$.
Very recently, Chu, Park, and Ryu \cite{CPR2025} determined $ex_2(n,\mathcal{C}_{0\bmod 4})$.

In this paper, we determine both $ex_2(n,\mathcal{C}_{1\bmod 3})$ and $ex_2(n,\mathcal{C}_{2\bmod 4})$, thereby completing the determination of the extremal number for $2$-connected graphs with no cycles of length congruent to a fixed residue modulo $k$ for all $k\le 4$.

It is not difficult to deduce that $ex_2(n,\mathcal{C}_{2\bmod 4}) = 2n-2$, and that the extremal graphs are obtained from $K_{2,n-2}$ by adding one edge in each part for $n\ge 6$ (see Section~\ref{sec:concluding}).
We therefore focus primarily on determining $ex_2(n,\mathcal{C}_{1\bmod 3})$. Note that the value $ex_2(4,\mathcal{C}_{1\bmod 3})$ is not defined as any $2$-connected $4$-vertex graph contains a $4$-cycle.
The following is our main theorem. 

\begin{theorem}\label{thm:main-1mod3}
For $n\geq 3$ and $n\neq 4$,   
\[  
ex_2(n,\mathcal{C}_{1\bmod 3})
= f(n)
:=\begin{cases}
\lceil\frac{3}{2}n \rceil-2, &  \text{if }3\leq n \leq 14 \text{ and }n\not\in\{4,5,10,13\};\\
\lceil\frac{3}{2}n \rceil-3, &  \text{if } n=5;\\
\lceil\frac{3}{2}n \rceil, &  \text{if }n=10;\\
\lceil\frac{3}{2}n \rceil-1, &  \text{if } n\geq 13 \text{ and } n\neq 14.
\end{cases}
\]
The family $\mathcal{H}_n$ of extremal graphs for $ex_2(n,\mathcal{C}_{1\bmod 3})$ is given in Subsection~\ref{subsec:H_def}.
\end{theorem}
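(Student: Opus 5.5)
The proof has two halves: constructions giving $ex_2(n,\mathcal{C}_{1\bmod 3})\ge f(n)$, and a matching upper bound by induction on $n$ along an ear decomposition.

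\textbf{Constructions.} For the lower bound I will build, for each $n$, a $2$-connected graph with $f(n)$ edges and no $(1\bmod 3)$-cycle; these graphs will form $\mathcal{H}_n$. For $n=10$ the Petersen graph has $15=\lceil 3n/2\rceil$ edges, and its cycles have lengths $5,6,8,9$, so none is $1 \bmod 3$. For large $n$ I will glue Petersen-like pieces in a $2$-connected way, for example by identifying an edge or an adjacent pair of vertices, or by joining pieces through short paths whose lengths are chosen mod $3$. Such a gluing costs a bounded number of edges relative to $3n/2$, and this loss accounts for the $-1$ for $n\ge 13$. For $3\le n\le 14$ I will give ad hoc graphs, such as odd-cycle-based theta graphs and $K_{3,3}$-like gadgets with subdivided edges. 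To verify that no $(1\bmod 3)$-cycle exists, I will observe that every cycle through a $2$-cut $\{u,v\}$ is formed by two $u$--$v$ paths from different sides. So it suffices to record, for each side, the set of residues mod $3$ of its $u$--$v$ path lengths, and check that no sum of two residues from different sides is $\equiv 1$.

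\textbf{Upper bound.} I will take a minimum counterexample $G$: $2$-connected, with no $(1\bmod 3)$-cycle and $e(G)>f(n)$. First I will use Theorem~\ref{thm:1mod3} to dispose of small and degenerate cases. Next I will take an ear decomposition, or better, a $2$-cut $\{u,v\}$ splitting $G$ into $G_1$ and $G_2$. For each side $G_i$ I will record the residue set $R_i\subseteq\mathbb{Z}_3$ of $u$--$v$ path lengths in $G_i$. The key lemma is that in a $2$-connected graph, if $u$ and $v$ lie on a common cycle that is not itself the whole side, then $u$--$v$ paths realise at least two distinct residues, unless the side is very restricted. Forbidding $R_1+R_2\ni 1$ then forces one side to realise essentially a single residue. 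I will bound the edges of such a single-residue side by roughly $\tfrac32 n_i-c$, using induction on the graph obtained by adding the edge $uv$ or a short $u$--$v$ path of the appropriate residue. The added path lengths must be chosen so that no $(1\bmod 3)$-cycle is created. Summing the two bounds, with careful bookkeeping of the constants, will give $e(G)\le f(n)$.

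If $G$ is $3$-connected, the Petersen graph is the unique extremal graph in Theorem~\ref{thm:1mod3}. I will show that a $3$-connected graph without $(1\bmod3)$-cycles has at most $\tfrac32 n$ edges, with equality only for Petersen, and that $n=10$ is the only case where such a graph can exceed $f(n)$. To do this I will show that a vertex of degree at least $4$, or a dense local structure, creates cycles of all three residues, which yields average degree at most $3$.

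\textbf{Main obstacle.} The hardest step is the $2$-cut analysis with exact constants. The gluing must lose exactly the right amount, namely $1$, $2$ or $3$ edges depending on $n$. The small exceptions $n=5,10,13,14$ must come out precisely, so a careful case analysis of the residue sets $R_i$ and of small sides, say with $n_i\le 14$, will be unavoidable. I would first try a table of $(\text{size},\text{residue set})\mapsto$ maximum edges, proved jointly by induction, and then combine the entries. The $3$-connected step also needs a separate argument that only Petersen attains $3n/2$.
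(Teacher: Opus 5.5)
\textbf{There are genuine gaps in both the construction and the upper bound.}

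\textbf{Lower bound.} Gluing several Petersen-like pieces cannot work. Suppose a $2$-connected graph contains two pieces $P_1,P_2$ attached at $\{a_1,b_1\}$ and $\{a_2,b_2\}$, either by identification or by two disjoint linking paths. Suppose also that each $P_i$ has $(a_i,b_i)$-paths of two different residues; the Petersen graph has this for every pair of vertices, by Lemma~\ref{lemma: 2 paths mod 3}. Then the sumset fact you use yourself, $|R_1|,|R_2|\ge 2\Rightarrow R_1+R_2=\mathbb{Z}_3$, gives a $(1\bmod 3)$-cycle. For example, two Petersen graphs sharing an edge contain a $10$-cycle. So at most one such rich piece can occur, and all further vertices must come from gadgets that realise a single residue. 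This is what $\mathcal{H}_n$ does:
\begin{itemize}
\item a $2$-extension adds a $3$-path between a $2$-extendable pair, typically the ends of an existing $3$-thread, giving $2$ vertices and exactly $3$ edges;
\item a $3$-extension adds a $4$-path across an edge.
\end{itemize}
For odd $n\ge 13$ the extremal graph is one Petersen graph with edge $xy$, an extra vertex $z$ joined to $x$, and $\frac12(n-11)$ parallel $(y,z)$-$3$-threads (Fact~\ref{FaUniqueHngeq13odd}). Theorem~\ref{thm:1mod3} also cannot dispose of small cases in general, since $ex(n,\mathcal{C}_{1\bmod 3})>f(n)$ for $n=11,12,14$.

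\textbf{Upper bound: the missing case.} Your sumset observation at an essential $2$-cut is exactly the paper's argument. However, the dichotomy ``$3$-connected versus a $2$-cut'' misses the case carrying most of the difficulty: graphs with no essential $2$-cut that are not $3$-connected. These are simple subdivisions of $3$-connected graphs whose degree-$2$ vertices are pairwise non-adjacent. The extremal graphs $H_7,H_8,H_9,H_{11}$ live there. So does $\hat{K}^{(3)}_{3,p}$, with $6p+3$ edges on $4p+3$ vertices, one below $f(4p+3)=6p+4$ for every $p\ge 3$; no degree-counting heuristic can dismiss this case.

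Your $2$-cut induction does not reach it either. The only $2$-cuts are neighbourhoods $\{u,v\}$ of degree-$2$ vertices $w$, and Lemma~\ref{lemma: 2 paths mod 3} forces $R(G-w)=\{0,1\}$. The only admissible replacement paths for $uwv$ then have length $\equiv 2$: length $2$ returns $G$, and length $5$ gives only $e(G)\le f(n+3)-3>f(n)$.

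The paper settles this case by classification. If there are two disjoint cycles, it uses Lemma~\ref{lem:H10:H10-}, the long Petersen argument from earlier work. Otherwise it uses Lov\'asz's theorem plus a case analysis of simple subdivisions of $K_5^-,K_5,W_p,K^{(i)}_{3,p}$ (Lemma~\ref{lemma: no 2 cycles}). Your claim that a vertex of degree at least $4$ forces all three residues is unsupported. That Petersen is the only $3$-connected example rests on this machinery.

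\textbf{Upper bound: the key reduction.} The single-residue bound you leave open is not needed. By Lemma~\ref{lemma: 2 paths mod 3}, a nontrivial side realising one residue must contain two adjacent degree-$2$ vertices, so $G$ has a $3$-thread $xx'y'y$. Deleting $x',y'$ removes exactly $2$ vertices and $3$ edges, matching $f(n)\ge f(n-2)+3$. What remains is:
\begin{itemize}
\item the case where $G-\{x',y'\}$ is not $2$-connected, handled by identifying $x$ and $y$ and using $f(n)-4\ge f(n-3)$;
\item the exceptional order $n=12$, handled via $\mathcal{H}'_{10}$.
\end{itemize}
This $3$-thread reduction is the key idea your proposal lacks. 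It is also what yields the characterization of $\mathcal{H}_n$ as iterated $2$- and $3$-extensions.
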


As a consequence of Theorem~\ref{thm:main-1mod3}, we characterize the extremal graphs for $ex(n,\mathcal{C}_{1\bmod 3})$ for all values of $n$, see Subsection~\ref{subsec:allextremal}.


\subsection*{Organization}

The rest of this paper is organized as follows. 
In Section~\ref{sec:extremal-graphs}, we define the extremal graph families $\mathcal{H}_n$ for $ex_2(n,\mathcal{C}_{1\bmod 3})$, establish their basic properties, and also characterize all extremal graphs for $ex(n,\mathcal{C}_{1\bmod 3})$. Section~\ref{sec:lemmas and theorems} collects several results needed in our proofs. The proof of Theorem~\ref{thm:main-1mod3}, concerning $2$-connected graphs without $(1 \bmod 3)$-cycles, is given in Section~\ref{sec:proofs}. Section~\ref{sec:concluding} concludes with determining $ex_2(n,\mathcal{C}_{2\bmod 4})$ and an open problem on $ex_2(n,\mathcal{C}_{2\bmod 2k})$ for $k\ge 3$.

\section{Extremal graph families}
\label{sec:extremal-graphs}

\subsection{Extensions and extendable pairs}

Let $i\in [1,3]$ and $H$ be a graph. For two distinct vertices $x,y\in V(H)$, we call $\{x,y\}$ an $i$-\textit{extendable pair} of $H$ if $H$ has no $(-i\bmod 3)$-paths between $x$ and $y$. If $\{x,y\}$ is an $i$-extendable pair of $H$ and $L$ is the graph obtained from $H$ by adding an $(x,y)$-path of length $i+1$ which is internally vertex-disjoint with $H$, then we call $L$ an $i$-\textit{extension} of $H$. If $H$ has exactly one $i$-extension up to isomorphism, then we will use $L_i(H)$ to denote an $i$-extension of $H$. 
We denote by $\mathcal{L}_i(H)$ the set of $i$-extensions of $H$. For a family $\mathcal{H}$ of graphs, we set $\mathcal{L}_i(\mathcal{H})=\bigcup_{H\in\mathcal{H}}\mathcal{L}_i(H)$. 

A \textit{thread} of a graph is a path whose internal vertices all have degree exactly $2$. A thread of length $k$ is called a \textit{$k$-thread}. We call a thread (resp.\ $k$-thread) from $x$ to $y$ an $(x,y)$-\textit{thread} (resp.\ $(x,y)$-$k$-\textit{thread}). Note that every $2$-extension or $3$-extension of a graph always contains a $3$-thread. 
The length of a walk $W$, denoted by $\ell(W)$, is the number of edges in $W$.

\begin{fact}\label{FaExtensionedge}
    Let $H$ be a graph without $(1\bmod 3)$-cycles, and let $L$ be an $i$-extension of $H$ for some $i\in[1,3]$. Then $L$ has no $(1\bmod 3)$-cycles, $n(L)=n(H)+i$, and $e(L)=e(H)+i+1$.
\end{fact}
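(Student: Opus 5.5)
The vertex and edge counts come straight from the construction. The real content is that no $(1\bmod 3)$-cycle is created. Write $P$ for the added $(x,y)$-path of length $i+1$. Its $i$ internal vertices are new, so $n(L)=n(H)+i$. Its $i+1$ edges are all new (for $i=1$ the single internal vertex still gives two new edges), so $e(L)=e(H)+i+1$.

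For the cycle condition, let $C$ be any cycle of $L$. If $C$ uses none of the internal vertices of $P$, then $C$ is a cycle of $H$. In that case $\ell(C)\not\equiv 1 \pmod 3$ by assumption.

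Otherwise $C$ contains an internal vertex of $P$. Each internal vertex of $P$ has degree $2$ in $L$, with both of its neighbours on $P$. Hence $C$ must contain all of $P$, and it enters and leaves $P$ only at the endpoints $x$ and $y$. Removing the interior of $P$ from $C$ therefore leaves an $(x,y)$-path $Q$ in $H$, and
\[
\ell(C)=\ell(Q)+i+1 .
\]
Because $\{x,y\}$ is an $i$-extendable pair of $H$, no $(x,y)$-path in $H$ has length congruent to $-i$ modulo $3$, so $\ell(Q)\not\equiv -i \pmod 3$. Consequently $\ell(C)\not\equiv -i+i+1\equiv 1 \pmod 3$.

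Thus every cycle of $L$ has length not congruent to $1$ modulo $3$. The only step needing any care is the observation that a cycle meeting the interior of $P$ must traverse all of $P$, and that follows from the degree-$2$ property of the internal vertices.
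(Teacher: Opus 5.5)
Your proposal is correct and follows essentially the same argument as the paper. A cycle meeting the added path must contain all of it because the path is a thread, and what remains is an $(x,y)$-path in $H$ whose length is forbidden by the $i$-extendability of $\{x,y\}$; the vertex and edge counts come directly from the construction.
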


\begin{proof}
    Suppose that $\{x,y\}$ is an $i$-extendable pair of $H$, and $P$ is the added $(x,y)$-path in the definition of the $i$-extension $L$ of $H$. So $\ell(P)=i+1$. If $L$ has a $(1\bmod 3)$-cycle $C$, then $C$ contains some edges of $P$. Since $P$ is a thread in $L$, we see that $C$ contains all edges of $P$. Now $C-E(P)$ is an $(x,y)$-path in $H$ of length $(1-\ell(P))\equiv(-i)\pmod 3$, a contradiction. The other assertion can be immediately deduced from the definition of the $i$-extensions.
\end{proof}

\begin{fact}\label{Fa3Extension}
    Let $H$ be a graph without $(1\bmod 3)$-cycles and $x,y\in V(H)$. If $xy\in E(H)$, then $\{x,y\}$ is a $3$-extendable pair of $H$. Furthermore, if $H$ is an extremal graph for $ex_2(n,\mathcal{C}_{1\bmod 3})$, then $\{x,y\}$ is a $3$-extendable pair if and only if $xy\in E(H)$.
\end{fact}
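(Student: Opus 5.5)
The first assertion follows by pairing paths with the edge $xy$; the second is an extremality argument based on Fact~\ref{FaExtensionedge}.

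\textbf{First assertion.} A $3$-extendable pair is one with no $(x,y)$-path of length $-3\equiv 0\pmod 3$. Suppose $xy\in E(H)$ and $P$ is an $(x,y)$-path in $H$ of length $\ell(P)\equiv 0\pmod 3$. Then $\ell(P)\ge 3$, so $P$ does not consist of the edge $xy$ alone. Hence $P+xy$ is a cycle of length $\ell(P)+1\equiv 1\pmod 3$, contradicting that $H$ has no $(1\bmod 3)$-cycles. So every edge of $H$ is a $3$-extendable pair.

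\textbf{Second assertion.} By the first part it suffices to show the converse: if $H$ is extremal and $\{x,y\}$ is $3$-extendable, then $xy\in E(H)$. Suppose instead that $\{x,y\}$ is $3$-extendable and $xy\notin E(H)$. Let $H'=H+xy$.

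\begin{itemize}
\item $H'$ is simple, $2$-connected and has $n$ vertices.
\item $H'$ has no $(1\bmod 3)$-cycle. Any such cycle must use $xy$, and removing $xy$ leaves an $(x,y)$-path in $H$ of length $\equiv 0\pmod 3$, which cannot exist since $\{x,y\}$ is $3$-extendable.
\item $e(H')=e(H)+1=ex_2(n,\mathcal{C}_{1\bmod 3})+1$, contradicting the definition of $ex_2$.
\end{itemize}

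Hence $xy\in E(H)$.

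The only point needing care is that $x\neq y$ and $x,y$ are nonadjacent, so the added edge creates no multi-edge; both hold by assumption. I do not need the value of $f(n)$, only the maximality of $e(H)$ among $2$-connected graphs with no $(1\bmod 3)$-cycles.
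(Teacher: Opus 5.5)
Your proof is correct and follows the paper's argument essentially verbatim. The first part closes an $(x,y)$-path of length $0 \bmod 3$ with the edge $xy$, and the converse adds $xy$ to an extremal graph to obtain a larger $2$-connected graph with no $(1\bmod 3)$-cycles; your explicit check that $H+xy$ stays $2$-connected is a detail the paper leaves implicit.
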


\begin{proof}
    If $H$ has an $(x,y)$-path $P$ of length $0\bmod 3$, then $P$ does not contain the edge $xy$, and $P+xy$ is a $(1\bmod 3)$-cycle of $H$, a contradiction. Thus $H$ has no $(x,y)$-paths of length $0\bmod 3$, and $\{x,y\}$ is a 3-extendable pair of $H$. Suppose $\{x,y\}$ is a 3-extendable pair of $H$ and $xy\notin E(H)$. Let $H'=H+xy$. If $H'$ contains a $(1\bmod 3)$-cycle $C$, then $C$ must contain the edge $xy$, and $C-xy$ is an $(x,y)$-path in $H$ of length $0\bmod 3$, a contradiction. It follows that $H'$ contains no $(1\bmod 3)$-cycles and thus $H$ is not an extremal graph for $ex_2(n,\mathcal{C}_{1\bmod 3})$.
\end{proof}

\begin{fact}\label{Fa2Extension}
    Let $H$ be a graph without $(1\bmod 3)$-cycles and $x,y\in V(H)$. If $H$ has an {$(x,y)$-$3$-thread}, then $\{x,y\}$ is a $2$-extendable pair of $H$. 
\end{fact}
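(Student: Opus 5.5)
Write the $(x,y)$-$3$-thread as $T=xuvy$, where $u,v$ have degree $2$ in $H$. By definition, $\{x,y\}$ is a $2$-extendable pair exactly when $H$ has no $(x,y)$-path of length $-2\equiv 1\pmod 3$. So I will assume $H$ contains an $(x,y)$-path $P$ with $\ell(P)\equiv 1\pmod 3$ and derive a contradiction, in the same way as Facts~\ref{FaExtensionedge} and~\ref{Fa3Extension}.

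The main step is to close $P$ into a cycle using $T$. First I check that $P$ avoids the interior of $T$. Suppose $P$ contains $u$ or $v$. Since $u$ and $v$ have degree $2$ and the thread is a path from $x$ to $y$, $P$ must then traverse the whole thread: entering the thread at $u$ forces $P$ through $v$ and on to $y$ (or symmetrically). Because $P$ is an $(x,y)$-path, this means $P=T$ itself, and then $\ell(P)=3\equiv 0\not\equiv 1\pmod 3$. So $P\neq T$ implies $P$ is internally disjoint from $\{u,v\}$.

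Also $x\neq y$, since the thread is a path, so $P$ has length at least $1$. Therefore $C=P\cup T$ is a cycle of length $\ell(P)+3\equiv 1\pmod 3$. This contradicts the assumption that $H$ has no $(1\bmod 3)$-cycles, so no such $P$ exists and $\{x,y\}$ is $2$-extendable.

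The only delicate point is the claim that a path meeting an internal vertex of a thread with both ends at $x$ and $y$ must equal the thread. This follows from the degree-$2$ condition: once $P$ uses $u$, both of $u$'s edges go to vertices of the thread, which forces $P$ along $T$.
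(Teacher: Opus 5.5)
Your proof is correct and is the same argument as the paper's. You assume an $(x,y)$-path $P$ of length $1 \bmod 3$, note that $P$ cannot use the thread, and close $P$ with the $3$-thread into a $(1\bmod 3)$-cycle. You also spell out why $P$ avoids the thread, which the paper leaves implicit: the only $(x,y)$-path through an internal vertex of the thread is the thread itself, and its length $3$ is not $1 \bmod 3$.
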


\begin{proof}
    Let $Q$ be an {$(x,y)$-$3$-thread} of $H$. If $H$ has an $(x,y)$-path $P$ of length $1\bmod 3$, then $P$ does not contain any edge of $Q$, and $P\cup Q$ is a $(1\bmod 3)$-cycle of $H$, a contradiction. Thus $H$ has no $(x,y)$-paths of length $1\bmod 3$, and $\{x,y\}$ is a 2-extendable pair of $H$. 
\end{proof}

One can find in Table \ref{table:smalln} the value of $f(n)$ for small $n$.
We finish this subsection with a simple observation on the values of $f(n)$.
\begin{table}[htbp]
\centering 
\begin{tabular}{c||c|c|c|c|c|c|c|c|c|c|c|c}
\hline 
$n$ & 3 & 5 & 6 & 7 & 8 & 9 & 10 & 11 & 12 & 13 & 14 & $n\geq 15$ \\\hline 
$f(n)$ & 3 & 5 & 7 & 9 & 10 & 12 & 15 & 15 & 16 & 19 & 19 &  $\lceil\frac{3}{2}n \rceil-1$ \\\hline
\end{tabular}
\caption{The value of $f(n)$.}
\label{table:smalln}
\end{table} 

\begin{fact}\label{Fafn}
    For every $n\geq 7$, $f(n)\geq f(n-2)+3$ unless $n=12$, and the equality holds if and only if $n\not\in\{7,10,12,13,16\}$. For every $n\geq 8$, $f(n)\geq f(n-3)+4$, and the equality holds if and only if $n=13$ or $n\geq 12$ is even.
\end{fact}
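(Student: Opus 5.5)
The plan is a direct verification from the definition of $f(n)$ in Theorem~\ref{thm:main-1mod3}. The small values come from Table~\ref{table:smalln} and the large values from the closed formula. I would split into a finite range, where both $n$ and the shifted argument ($n-2$ or $n-3$) may lie in $[3,14]$, and a tail, where both are at least $15$. In the tail, $f(m)=\lceil\frac32 m\rceil-1$ for both arguments, and only parity matters.

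\textbf{First claim, the tail.} For $n\ge 17$ both $n$ and $n-2$ are at least $15$. Since $\lceil\frac32 n\rceil=\lceil\frac32(n-2)\rceil+3$ always, we get $f(n)-f(n-2)=3$ exactly.

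\textbf{First claim, the finite range $7\le n\le 16$.} Reading differences off the table gives:
\begin{center}
\begin{tabular}{c||c|c|c|c|c|c|c|c|c|c}
$n$ & 7 & 8 & 9 & 10 & 11 & 12 & 13 & 14 & 15 & 16\\\hline
$f(n)-f(n-2)$ & 4 & 3 & 3 & 5 & 3 & 1 & 4 & 3 & 3 & 4
\end{tabular}
\end{center}
Here $f(15)=22$ and $f(16)=23$ come from the formula. So $f(n)\ge f(n-2)+3$ fails only at $n=12$. Among the remaining $n$, strict inequality occurs exactly at $n\in\{7,10,13,16\}$, which is the claimed characterization.

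\textbf{Second claim, the tail.} For $n\ge 18$ both $n$ and $n-3$ are at least $15$. If $n$ is even, then $\lceil\frac32 n\rceil-\lceil\frac32(n-3)\rceil=\frac{3n}{2}-(\frac{3n}{2}-4)=4$. If $n$ is odd, the same difference is $\frac{3n+1}{2}-\frac{3n-9}{2}=5$. Hence the difference is at least $4$, with equality iff $n$ is even.

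\textbf{Second claim, the finite range $8\le n\le 17$.} The table and formula give differences
\begin{center}
\begin{tabular}{c||c|c|c|c|c|c|c|c|c|c}
$n$ & 8 & 9 & 10 & 11 & 12 & 13 & 14 & 15 & 16 & 17\\\hline
$f(n)-f(n-3)$ & 5 & 5 & 6 & 5 & 4 & 4 & 4 & 6 & 4 & 6
\end{tabular}
\end{center}
All differences are at least $4$. Equality holds exactly at $n\in\{12,13,14,16\}$, which together with the even $n\ge18$ is precisely "$n=13$ or $n\ge 12$ even".

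There is no real obstacle here; the only care needed is at the boundary, where one argument uses the table and the other uses the formula. These are the cases $n\in\{15,16\}$ for the first claim and $n\in\{15,16,17\}$ for the second. I would double-check those entries, along with the exceptional table values $f(5)$, $f(10)$ and $f(13)$.
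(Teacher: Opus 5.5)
Your proposal is correct and takes the same approach as the paper, which simply says both assertions follow by direct computation from the formula for $f(n)$ and its table of values. Your difference tables, together with the parity argument for the tails $n\ge 17$ and $n\ge 18$, carry out exactly that computation explicitly, and all the entries check out.
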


\begin{proof}
    Both assertions can be computed by the formula of $f(n)$ (see Table \ref{table:smalln}).
\end{proof}

\subsection{Definition of $\mathcal{H}_n$}\label{subsec:H_def}
We define a graph class $\mathcal{H}_n$ whose members are $2$-connected graphs on $n$ vertices with $f(n)$ edges and no $\modthree{1}$-cycles. Later, we will show that $\mathcal{H}_n$ consists precisely of the extremal graphs in our main theorem when $n\geq 3$ and $n\neq 4$. 

For a graph $G$, we denote by $G^-$ a graph obtained from $G$ by removing exactly one edge. 
First, we define the graphs $H_n$ for each $n\in\{3\}\cup[5,11]$,
as follows (see Figure \ref{FiGraphHi}):
\begin{itemize}
\item $H_3$ is isomorphic to $C_3$;
\item $H_5$ is isomorphic to $C_5$;
\item $H_6$ is the union of a $C_3$ and a $C_5$ which have an edge in common;
\item $H_7$ is obtained from a $K_4$ by subdividing all edges of a $K_{1,3}$ exactly once;
\item $H_8$ is obtained from a $K_4$ by subdividing all edges of a $C_4$ exactly once;
\item $H_9$ is obtained from a $K_{3,3}$ by subdividing all edges of a $3K_2$ exactly once;
\item $H_{10}$ is the Petersen graph;
\item $H_{11}$ is obtained from a $K_5^-$ by subdividing all edges incident to $x$ or $y$ exactly once, where $x$ and $y$ are nonadjacent vertices in $K_5^-$. 
\end{itemize}

\begin{figure}[htbp]
\centering
\begin{tikzpicture}[scale=0.4]

\begin{scope}[xshift=-12cm]
\draw[fill=black] (0,2.8) {coordinate (x1)} circle (0.1);
\draw[fill=black] (-2.8,-2.4) {coordinate (x2)} circle (0.1);
\draw[fill=black] (2.8,-2.4) {coordinate (x3)} circle (0.1);
\draw[thick] (x1)--(x2)--(x3)--(x1);
\node at (0,-3.5) {$H_3$};
\end{scope}

\begin{scope}[xshift=-4cm]
\foreach \x in {1,2,...,5} \draw[fill=black] (\x*72+18:3) {coordinate (x\x)} circle (0.1);
\draw[thick] (x1)--(x2)--(x3)--(x4)--(x5)--(x1);
\node at (0,-3.5) {$H_5$};
\end{scope}

\begin{scope}[xshift=4cm]
\draw[fill=black] (0,3) {coordinate (x1)} circle (0.1);
\draw[fill=black] (-1.6,0.6) {coordinate (y1)} circle (0.1);
\draw[fill=black] (1.6,0.6) {coordinate (y3)} circle (0.1);
\draw[fill=black] (-3,-1.5) {coordinate (z1)} circle (0.1);
\draw[fill=black] (0,-1.5) {coordinate (z2)} circle (0.1);
\draw[fill=black] (3,-1.5) {coordinate (z3)} circle (0.1);
\draw[thick] (x1)--(z1) (x1)--(z3) (z1)--(z3);
\draw[thick] (z1) edge [bend right=40] (z3);
\node at (0,-3.5) {$H_6$};
\end{scope}

\begin{scope}[xshift=12cm]
\draw[fill=black] (0,3) {coordinate (x1)} circle (0.1);
\draw[fill=black] (-1.6,0.6) {coordinate (y1)} circle (0.1);
\draw[fill=black] (0,0.6) {coordinate (y2)} circle (0.1);
\draw[fill=black] (1.6,0.6) {coordinate (y3)} circle (0.1);
\draw[fill=black] (-3,-1.5) {coordinate (z1)} circle (0.1);
\draw[fill=black] (0,-1.5) {coordinate (z2)} circle (0.1);
\draw[fill=black] (3,-1.5) {coordinate (z3)} circle (0.1);
\draw[thick] (x1)--(z1) (x1)--(z2) (x1)--(z3) (z1)--(z2)--(z3);
\draw[thick] (z1) edge [bend right=40] (z3);
\node at (0,-3.5) {$H_7$};
\end{scope}

\begin{scope}[xshift=-12cm, yshift=-8cm]
\draw[fill=black] (-2.6,-2.6) {coordinate (x1)} circle (0.1);
\draw[fill=black] (-2.6,2.6) {coordinate (x2)} circle (0.1);
\draw[fill=black] (2.6,-2.6) {coordinate (x3)} circle (0.1);
\draw[fill=black] (2.6,2.6) {coordinate (x4)} circle (0.1);
\draw[fill=black] (-2.6,0) {coordinate (y1)} circle (0.1);
\draw[fill=black] (0,-2.6) {coordinate (y2)} circle (0.1);
\draw[fill=black] (2.6,0) {coordinate (y3)} circle (0.1);
\draw[fill=black] (0,2.6) {coordinate (y4)} circle (0.1);
\draw[thick] (x1)--(x2)--(x3)--(x4)--(x1);
\draw[thick] (x1)--(x3) (x2)--(x4);
\node at (0,-3.5) {$H_8$};
\end{scope}

\begin{scope}[xshift=-4cm, yshift=-8cm]
\draw[fill=black] (-2.6,-2.6) {coordinate (x1)} circle (0.1);
\draw[fill=black] (0,-2.6) {coordinate (x2)} circle (0.1);
\draw[fill=black] (2.6,-2.6) {coordinate (x3)} circle (0.1);
\draw[fill=black] (-2.6,0) {coordinate (y1)} circle (0.1);
\draw[fill=black] (0,0) {coordinate (y2)} circle (0.1);
\draw[fill=black] (2.6,0) {coordinate (y3)} circle (0.1);
\draw[fill=black] (-2.6,2.6) {coordinate (z1)} circle (0.1);
\draw[fill=black] (0,2.6) {coordinate (z2)} circle (0.1);
\draw[fill=black] (2.6,2.6) {coordinate (z3)} circle (0.1);
\draw[thick] (x1)--(z1) (x2)--(z2) (x3)--(z3);
\draw[thick] (x1) edge [bend right=15] (z3);
\draw[thick] (x3) edge [bend left=15] (z1);
\draw[thick] (x1) edge [bend right=10] (z2);
\draw[thick] (x2) edge [bend left=10] (z1);
\draw[thick] (x2) edge [bend right=10] (z3);
\draw[thick] (x3) edge [bend left=10] (z2);
\node at (0,-3.5) {$H_9$};
\end{scope}

\begin{scope}[xshift=4cm, yshift=-8cm]
\foreach \x in {1,2,...,5} {\draw[fill=black] (\x*72+18:3) {coordinate (x\x)} circle (0.1);
\draw[fill=black] (\x*72+18:1.7) {coordinate (y\x)} circle (0.1);
\draw[thick] (x\x)--(y\x);}
\draw[thick] (x1)--(x2)--(x3)--(x4)--(x5)--(x1);
\draw[thick] (y1)--(y3)--(y5)--(y2)--(y4)--(y1);
\node at (0,-3.5) {$H_{10}$};
\end{scope}

\begin{scope}[xshift=12cm, yshift=-8cm]
\draw[fill=black] (-2,3) {coordinate (x1)} circle (0.1);
\draw[fill=black] (2,3) {coordinate (x2)} circle (0.1);
\foreach \x in {1,2,...,6} \draw[fill=black] (\x-3.5,0.75) {coordinate (y\x)} circle (0.1);
\draw[fill=black] (-3,-1.5) {coordinate (z1)} circle (0.1);
\draw[fill=black] (0,-1.5) {coordinate (z2)} circle (0.1);
\draw[fill=black] (3,-1.5) {coordinate (z3)} circle (0.1);
\draw[thick] (x1)--(z1) (x2)--(z3);
\draw[thick] (x1) edge [bend right=19] (z2);
\draw[thick] (x2) edge [bend left=19] (z2);
\draw[thick] (x1) edge [bend right=21] (z3);
\draw[thick] (x2) edge [bend left=21] (z1);
\draw[thick] (z1)--(z3);
\draw[thick] (z1) edge [bend right=40] (z3);
\node at (0,-3.5) {$H_{11}$};
\end{scope}

\end{tikzpicture}
\caption{Graphs $H_n$, $n\in\{3\}\cup[5,11]$.}
\label{FiGraphHi}
\end{figure}

Now we define the class $\mathcal{H}_n$ for all $n\geq 3$, $n\neq 4$: 
\begin{itemize}
\item For $n\in\{3,5,6,7,9,10,11\}$, $\mathcal{H}_n=\{H_n\}$;
\item $\mathcal{H}_8=\{H_8,L_2(H_6)\}$ (see Figure~\ref{FiGraphL2H6});
\item $\mathcal{H}_{12}=\mathcal{L}_3(H_9)\cup\mathcal{L}_2(\mathcal{H}'_{10})$, where $\mathcal{H}'_{10}=\mathcal{L}_3(H_7)\cup\mathcal{L}_2(\mathcal{H}_8)$ (see Figures~\ref{FiClassH'10} and \ref{FiGraphH12});
\item $\mathcal{H}_{13}=\{L_3(H_{10})\}$;
\item $\mathcal{H}_{16}=\mathcal{L}_3(\mathcal{H}_{13})$;
\item for even $n\geq 14$ with $n\neq 16$, $\mathcal{H}_n=\mathcal{L}_3(\mathcal{H}_{n-3})\cup\mathcal{L}_2(\mathcal{H}_{n-2})$;
\item for odd $n\geq 15$, $\mathcal{H}_n=\mathcal{L}_2(\mathcal{H}_{n-2})$.
\end{itemize}

\begin{figure}[htbp]
\centering
\begin{tikzpicture}[scale=0.4]

\draw[fill=black] (0,3) {coordinate (x1)} circle (0.1);
\draw[fill=black] (1.6,1.5) {coordinate (x2)} circle (0.1);
\draw[fill=black] (-1.5,0) {coordinate (y1)} circle (0.1);
\draw[fill=black] (0,1.5) {coordinate (y2)} circle (0.1);
\draw[fill=black] (1.5,0) {coordinate (y3)} circle (0.1);
\draw[fill=black] (-3,-1.5) {coordinate (z1)} circle (0.1);
\draw[fill=black] (0,-1.5) {coordinate (z2)} circle (0.1);
\draw[fill=black] (3,-1.5) {coordinate (z3)} circle (0.1);
\draw[thick] (x1)--(x2) (y2)--(z1) (y2)--(z3) (z1)--(z3);
\draw[thick] (y1) edge [bend left=5] (x1);
\draw[thick] (x2) edge [bend left=5] (z3);
\draw[thick] (z1) edge [bend right=40] (z3);

\end{tikzpicture}
\caption{Graph $L_2(H_6)$.}
\label{FiGraphL2H6}
\end{figure}

\begin{figure}[htbp]
\centering
\begin{tikzpicture}[scale=0.4]

\begin{scope}[xshift=-16cm]
\draw[fill=black] (0,3) {coordinate (x1)} circle (0.1);
\draw[fill=black] (-1.6,0.6) {coordinate (y1)} circle (0.1);
\draw[fill=black] (0,0.6) {coordinate (y2)} circle (0.1);
\draw[fill=black] (1.6,0.6) {coordinate (y3)} circle (0.1);
\draw[fill=black] (-3,-1.5) {coordinate (z1)} circle (0.1);
\draw[fill=black] (0,-1.5) {coordinate (z2)} circle (0.1);
\draw[fill=black] (3,-1.5) {coordinate (z3)} circle (0.1);
\draw[thick] (x1)--(z1) (x1)--(z2) (x1)--(z3) (z1)--(z2)--(z3);
\draw[thick] (z1) edge [bend right=40] (z3);
\draw[fill=black] (-1.8,4.5) {coordinate (w1)} circle (0.1);
\draw[fill=black] (0,4.5) {coordinate (w2)} circle (0.1);
\draw[fill=black] (1.8,4.5) {coordinate (w3)} circle (0.1);
\draw[thick] (z1)--(w1)--(w3)--(z3); 
\end{scope}

\begin{scope}[xshift=-8cm]
\draw[fill=black] (0,3) {coordinate (x1)} circle (0.1);
\draw[fill=black] (-1.6,0.6) {coordinate (y1)} circle (0.1);
\draw[fill=black] (0,0.6) {coordinate (y2)} circle (0.1);
\draw[fill=black] (1.6,0.6) {coordinate (y3)} circle (0.1);
\draw[fill=black] (-3,-1.5) {coordinate (z1)} circle (0.1);
\draw[fill=black] (0,-1.5) {coordinate (z2)} circle (0.1);
\draw[fill=black] (3,-1.5) {coordinate (z3)} circle (0.1);
\draw[thick] (x1)--(z1) (x1)--(z2) (x1)--(z3) (z1)--(z2)--(z3);
\draw[thick] (z1) edge [bend right=40] (z3);
\draw[fill=black] (1,4.5) {coordinate (w1)} circle (0.1);
\draw[fill=black] (2,3.3) {coordinate (w2)} circle (0.1);
\draw[fill=black] (3,2.1) {coordinate (w3)} circle (0.1);
\draw[thick] (y3)--(w1)--(w3)--(z3); 
\end{scope}

\begin{scope}[xshift=0cm]
\draw[fill=black] (0,3) {coordinate (x1)} circle (0.1);
\draw[fill=black] (-1.6,0.6) {coordinate (y1)} circle (0.1);
\draw[fill=black] (0,0.6) {coordinate (y2)} circle (0.1);
\draw[fill=black] (1.6,0.6) {coordinate (y3)} circle (0.1);
\draw[fill=black] (-3,-1.5) {coordinate (z1)} circle (0.1);
\draw[fill=black] (0,-1.5) {coordinate (z2)} circle (0.1);
\draw[fill=black] (3,-1.5) {coordinate (z3)} circle (0.1);
\draw[thick] (x1)--(z1) (x1)--(z2) (x1)--(z3) (z1)--(z2)--(z3);
\draw[thick] (z1) edge [bend right=40] (z3);
\draw[fill=black] (0.5,4.5) {coordinate (w1)} circle (0.1);
\draw[fill=black] (1.5,3.3) {coordinate (w2)} circle (0.1);
\draw[fill=black] (2.5,2.1) {coordinate (w3)} circle (0.1);
\draw[thick] (x1)--(w1)--(w3)--(y3); 
\end{scope}

\begin{scope}[xshift=8cm]
\draw[fill=black] (-2.6,-2.6) {coordinate (x1)} circle (0.1);
\draw[fill=black] (-2.6,2.6) {coordinate (x2)} circle (0.1);
\draw[fill=black] (2.6,-2.6) {coordinate (x3)} circle (0.1);
\draw[fill=black] (2.6,2.6) {coordinate (x4)} circle (0.1);
\draw[fill=black] (-2.6,0) {coordinate (y1)} circle (0.1);
\draw[fill=black] (0,-2.6) {coordinate (y2)} circle (0.1);
\draw[fill=black] (2.6,0) {coordinate (y3)} circle (0.1);
\draw[fill=black] (0,2.6) {coordinate (y4)} circle (0.1);
\draw[thick] (x1)--(x2)--(x3)--(x4)--(x1);
\draw[thick] (x1)--(x3) (x2)--(x4);
\draw[fill=black] (0,4.5) {coordinate (w1)} circle (0.1);
\draw[fill=black] (2.5,3.8) {coordinate (w2)} circle (0.1);
\draw[thick] (w1)--(w2);
\draw[thick] (y4) edge [bend left=10] (w1);
\draw[thick] (w2) edge [bend left=30] (y3); 
\end{scope}

\begin{scope}[xshift=16cm]
\draw[fill=black] (0,3) {coordinate (x1)} circle (0.1);
\draw[fill=black] (1.6,1.5) {coordinate (x2)} circle (0.1);
\draw[fill=black] (-1.5,0) {coordinate (y1)} circle (0.1);
\draw[fill=black] (0,1.5) {coordinate (y2)} circle (0.1);
\draw[fill=black] (1.5,0) {coordinate (y3)} circle (0.1);
\draw[fill=black] (-3,-1.5) {coordinate (z1)} circle (0.1);
\draw[fill=black] (0,-1.5) {coordinate (z2)} circle (0.1);
\draw[fill=black] (3,-1.5) {coordinate (z3)} circle (0.1);
\draw[thick] (x1)--(x2) (y2)--(z1) (y2)--(z3) (z1)--(z3);
\draw[thick] (y1) edge [bend left=5] (x1);
\draw[thick] (x2) edge [bend left=5] (z3);
\draw[thick] (z1) edge [bend right=40] (z3);
\draw[fill=black] (0,4.5) {coordinate (w1)} circle (0.1);
\draw[fill=black] (1.7,3) {coordinate (w2)} circle (0.1);
\draw[thick] (w1)--(w2);
\draw[thick] (y1) edge [bend left=15] (w1);
\draw[thick] (w2) edge [bend left=15] (z3); 
\end{scope}

\end{tikzpicture}
\caption{All non-isomorphic graphs in the class $\mathcal{H}'_{10}$.}
\label{FiClassH'10}
\end{figure}

\begin{figure}[htbp]
\centering
\begin{tikzpicture}[scale=0.4]

\begin{scope}[xshift=-16cm]
\draw[fill=black] (-2.6,-2.6) {coordinate (x1)} circle (0.1);
\draw[fill=black] (0,-2.6) {coordinate (x2)} circle (0.1);
\draw[fill=black] (2.6,-2.6) {coordinate (x3)} circle (0.1);
\draw[fill=black] (-2.6,0) {coordinate (y1)} circle (0.1);
\draw[fill=black] (0,0) {coordinate (y2)} circle (0.1);
\draw[fill=black] (2.6,0) {coordinate (y3)} circle (0.1);
\draw[fill=black] (-2.6,2.6) {coordinate (z1)} circle (0.1);
\draw[fill=black] (0,2.6) {coordinate (z2)} circle (0.1);
\draw[fill=black] (2.6,2.6) {coordinate (z3)} circle (0.1);
\draw[thick] (x1)--(z1) (x2)--(z2) (x3)--(z3);
\draw[thick] (x1) edge [bend right=15] (z3);
\draw[thick] (x3) edge [bend left=15] (z1);
\draw[thick] (x1) edge [bend right=10] (z2);
\draw[thick] (x2) edge [bend left=10] (z1);
\draw[thick] (x2) edge [bend right=10] (z3);
\draw[thick] (x3) edge [bend left=10] (z2);
\draw[fill=black] (-0.8,6) {coordinate (v1)} circle (0.1);
\draw[fill=black] (1,5) {coordinate (v2)} circle (0.1);
\draw[fill=black] (2.8,4) {coordinate (v3)} circle (0.1);
\draw[thick] (v1)--(v3);
\draw[thick] (z1) edge [bend left=20] (v1);
\draw[thick] (v3) edge [bend left=30] (x3); 
\end{scope}

\begin{scope}[xshift=-8cm]
\draw[fill=black] (-2.6,-2.6) {coordinate (x1)} circle (0.1);
\draw[fill=black] (0,-2.6) {coordinate (x2)} circle (0.1);
\draw[fill=black] (2.6,-2.6) {coordinate (x3)} circle (0.1);
\draw[fill=black] (-2.6,0) {coordinate (y1)} circle (0.1);
\draw[fill=black] (0,0) {coordinate (y2)} circle (0.1);
\draw[fill=black] (2.6,0) {coordinate (y3)} circle (0.1);
\draw[fill=black] (-2.6,2.6) {coordinate (z1)} circle (0.1);
\draw[fill=black] (0,2.6) {coordinate (z2)} circle (0.1);
\draw[fill=black] (2.6,2.6) {coordinate (z3)} circle (0.1);
\draw[thick] (x1)--(z1) (x2)--(z2) (x3)--(z3);
\draw[thick] (x1) edge [bend right=15] (z3);
\draw[thick] (x3) edge [bend left=15] (z1);
\draw[thick] (x1) edge [bend right=10] (z2);
\draw[thick] (x2) edge [bend left=10] (z1);
\draw[thick] (x2) edge [bend right=10] (z3);
\draw[thick] (x3) edge [bend left=10] (z2);
\draw[fill=black] (-0.8,6) {coordinate (v1)} circle (0.1);
\draw[fill=black] (1,5) {coordinate (v2)} circle (0.1);
\draw[fill=black] (2.8,4) {coordinate (v3)} circle (0.1);
\draw[thick] (v1)--(v3);
\draw[thick] (z3) edge [bend left=20] (v1);
\draw[thick] (v3) edge [bend left=20] (y3); 
\end{scope}

\begin{scope}[xshift=0cm]
\draw[fill=black] (0,3) {coordinate (x1)} circle (0.1);
\draw[fill=black] (-1.6,0.6) {coordinate (y1)} circle (0.1);
\draw[fill=black] (0,0.6) {coordinate (y2)} circle (0.1);
\draw[fill=black] (1.6,0.6) {coordinate (y3)} circle (0.1);
\draw[fill=black] (-3,-1.5) {coordinate (z1)} circle (0.1);
\draw[fill=black] (0,-1.5) {coordinate (z2)} circle (0.1);
\draw[fill=black] (3,-1.5) {coordinate (z3)} circle (0.1);
\draw[thick] (x1)--(z1) (x1)--(z2) (x1)--(z3) (z1)--(z2)--(z3);
\draw[thick] (z1) edge [bend right=40] (z3);
\draw[fill=black] (-1.8,4.5) {coordinate (w1)} circle (0.1);
\draw[fill=black] (0,4.5) {coordinate (w2)} circle (0.1);
\draw[fill=black] (1.8,4.5) {coordinate (w3)} circle (0.1);
\draw[thick] (z1)--(w1)--(w3)--(z3);
\draw[fill=black] (-1.5,6) {coordinate (v1)} circle (0.1);
\draw[fill=black] (1.5,6) {coordinate (v2)} circle (0.1);
\draw[thick] (w1)--(v1)--(v2);
\draw[thick] (v2) edge [bend left=25] (z3); 
\end{scope}

\begin{scope}[xshift=8cm]
\draw[fill=black] (0,3) {coordinate (x1)} circle (0.1);
\draw[fill=black] (-1.6,0.6) {coordinate (y1)} circle (0.1);
\draw[fill=black] (0,0.6) {coordinate (y2)} circle (0.1);
\draw[fill=black] (1.6,0.6) {coordinate (y3)} circle (0.1);
\draw[fill=black] (-3,-1.5) {coordinate (z1)} circle (0.1);
\draw[fill=black] (0,-1.5) {coordinate (z2)} circle (0.1);
\draw[fill=black] (3,-1.5) {coordinate (z3)} circle (0.1);
\draw[thick] (x1)--(z1) (x1)--(z2) (x1)--(z3) (z1)--(z2)--(z3);
\draw[thick] (z1) edge [bend right=40] (z3);
\draw[fill=black] (1,4.5) {coordinate (w1)} circle (0.1);
\draw[fill=black] (2,3.3) {coordinate (w2)} circle (0.1);
\draw[fill=black] (3,2.1) {coordinate (w3)} circle (0.1);
\draw[thick] (y3)--(w1)--(w3)--(z3);
\draw[fill=black] (0.7,6) {coordinate (v1)} circle (0.1);
\draw[fill=black] (3,3.9) {coordinate (v2)} circle (0.1);
\draw[thick] (w1)--(v1)--(v2);
\draw[thick] (v2) edge [bend left=25] (z3); 
\end{scope}

\begin{scope}[xshift=16cm]
\draw[fill=black] (0,3) {coordinate (x1)} circle (0.1);
\draw[fill=black] (-1.6,0.6) {coordinate (y1)} circle (0.1);
\draw[fill=black] (0,0.6) {coordinate (y2)} circle (0.1);
\draw[fill=black] (1.6,0.6) {coordinate (y3)} circle (0.1);
\draw[fill=black] (-3,-1.5) {coordinate (z1)} circle (0.1);
\draw[fill=black] (0,-1.5) {coordinate (z2)} circle (0.1);
\draw[fill=black] (3,-1.5) {coordinate (z3)} circle (0.1);
\draw[thick] (x1)--(z1) (x1)--(z2) (x1)--(z3) (z1)--(z2)--(z3);
\draw[thick] (z1) edge [bend right=40] (z3);
\draw[fill=black] (1,4.5) {coordinate (w1)} circle (0.1);
\draw[fill=black] (2,3.3) {coordinate (w2)} circle (0.1);
\draw[fill=black] (3,2.1) {coordinate (w3)} circle (0.1);
\draw[thick] (y3)--(w1)--(w3)--(z3);
\draw[fill=black] (0.7,6) {coordinate (v1)} circle (0.1);
\draw[fill=black] (3,3.9) {coordinate (v2)} circle (0.1);
\draw[thick] (v1)--(v2)--(w3);
\draw[thick] (y3) edge [bend left=15] (v1); 
\end{scope}

\begin{scope}[xshift=-12cm, yshift=-11cm]
\draw[fill=black] (0,3) {coordinate (x1)} circle (0.1);
\draw[fill=black] (-1.6,0.6) {coordinate (y1)} circle (0.1);
\draw[fill=black] (0,0.6) {coordinate (y2)} circle (0.1);
\draw[fill=black] (1.6,0.6) {coordinate (y3)} circle (0.1);
\draw[fill=black] (-3,-1.5) {coordinate (z1)} circle (0.1);
\draw[fill=black] (0,-1.5) {coordinate (z2)} circle (0.1);
\draw[fill=black] (3,-1.5) {coordinate (z3)} circle (0.1);
\draw[thick] (x1)--(z1) (x1)--(z2) (x1)--(z3) (z1)--(z2)--(z3);
\draw[thick] (z1) edge [bend right=40] (z3);
\draw[fill=black] (0.5,4.5) {coordinate (w1)} circle (0.1);
\draw[fill=black] (1.5,3.3) {coordinate (w2)} circle (0.1);
\draw[fill=black] (2.5,2.1) {coordinate (w3)} circle (0.1);
\draw[thick] (x1)--(w1)--(w3)--(y3);
\draw[fill=black] (1,6) {coordinate (v1)} circle (0.1);
\draw[fill=black] (3.4,3.6) {coordinate (v2)} circle (0.1);
\draw[thick] (w1)--(v1)--(v2); 
\draw[thick] (v2) edge [bend left=35] (y3); 
\end{scope}

\begin{scope}[xshift=-4cm, yshift=-11cm]
\draw[fill=black] (0,3) {coordinate (x1)} circle (0.1);
\draw[fill=black] (-1.6,0.6) {coordinate (y1)} circle (0.1);
\draw[fill=black] (0,0.6) {coordinate (y2)} circle (0.1);
\draw[fill=black] (1.6,0.6) {coordinate (y3)} circle (0.1);
\draw[fill=black] (-3,-1.5) {coordinate (z1)} circle (0.1);
\draw[fill=black] (0,-1.5) {coordinate (z2)} circle (0.1);
\draw[fill=black] (3,-1.5) {coordinate (z3)} circle (0.1);
\draw[thick] (x1)--(z1) (x1)--(z2) (x1)--(z3) (z1)--(z2)--(z3);
\draw[thick] (z1) edge [bend right=40] (z3);
\draw[fill=black] (0.5,4.5) {coordinate (w1)} circle (0.1);
\draw[fill=black] (1.5,3.3) {coordinate (w2)} circle (0.1);
\draw[fill=black] (2.5,2.1) {coordinate (w3)} circle (0.1);
\draw[thick] (x1)--(w1)--(w3)--(y3);
\draw[fill=black] (1,6) {coordinate (v1)} circle (0.1);
\draw[fill=black] (3.4,3.6) {coordinate (v2)} circle (0.1);
\draw[thick] (v1)--(v2)--(w3); 
\draw[thick] (x1) edge [bend left=45] (v1); 
\end{scope}

\begin{scope}[xshift=4cm, yshift=-11cm]
\draw[fill=black] (-2.6,-2.6) {coordinate (x1)} circle (0.1);
\draw[fill=black] (-2.6,2.6) {coordinate (x2)} circle (0.1);
\draw[fill=black] (2.6,-2.6) {coordinate (x3)} circle (0.1);
\draw[fill=black] (2.6,2.6) {coordinate (x4)} circle (0.1);
\draw[fill=black] (-2.6,0) {coordinate (y1)} circle (0.1);
\draw[fill=black] (0,-2.6) {coordinate (y2)} circle (0.1);
\draw[fill=black] (2.6,0) {coordinate (y3)} circle (0.1);
\draw[fill=black] (0,2.6) {coordinate (y4)} circle (0.1);
\draw[thick] (x1)--(x2)--(x3)--(x4)--(x1);
\draw[thick] (x1)--(x3) (x2)--(x4);
\draw[fill=black] (0,4.5) {coordinate (w1)} circle (0.1);
\draw[fill=black] (2.5,3.8) {coordinate (w2)} circle (0.1);
\draw[thick] (w1)--(w2);
\draw[thick] (y4) edge [bend left=10] (w1);
\draw[thick] (w2) edge [bend left=30] (y3);
\draw[fill=black] (-0.5,6) {coordinate (v1)} circle (0.1);
\draw[fill=black] (3,5) {coordinate (v2)} circle (0.1);
\draw[thick] (v1)--(v2);
\draw[thick] (y4) edge [bend left=35] (v1);
\draw[thick] (v2) edge [bend left=35] (y3); 
\end{scope}

\begin{scope}[xshift=12cm, yshift=-11cm]
\draw[fill=black] (0,3) {coordinate (x1)} circle (0.1);
\draw[fill=black] (1.6,1.5) {coordinate (x2)} circle (0.1);
\draw[fill=black] (-1.5,0) {coordinate (y1)} circle (0.1);
\draw[fill=black] (0,1.5) {coordinate (y2)} circle (0.1);
\draw[fill=black] (1.5,0) {coordinate (y3)} circle (0.1);
\draw[fill=black] (-3,-1.5) {coordinate (z1)} circle (0.1);
\draw[fill=black] (0,-1.5) {coordinate (z2)} circle (0.1);
\draw[fill=black] (3,-1.5) {coordinate (z3)} circle (0.1);
\draw[thick] (x1)--(x2) (y2)--(z1) (y2)--(z3) (z1)--(z3);
\draw[thick] (y1) edge [bend left=5] (x1);
\draw[thick] (x2) edge [bend left=5] (z3);
\draw[thick] (z1) edge [bend right=40] (z3);
\draw[fill=black] (0,4.5) {coordinate (w1)} circle (0.1);
\draw[fill=black] (1.7,3) {coordinate (w2)} circle (0.1);
\draw[thick] (w1)--(w2);
\draw[thick] (y1) edge [bend left=15] (w1);
\draw[thick] (w2) edge [bend left=15] (z3);
\draw[fill=black] (0,6) {coordinate (v1)} circle (0.1);
\draw[fill=black] (1.8,4.5) {coordinate (v2)} circle (0.1);
\draw[thick] (v1)--(v2);
\draw[thick] (y1) edge [bend left=25] (v1);
\draw[thick] (v2) edge [bend left=25] (z3); 
\end{scope}

\end{tikzpicture}
\caption{All non-isomorphic graphs in the class $\mathcal{H}_{12}$.}
\label{FiGraphH12}
\end{figure}

Recall that $H_{10}$ is the Petersen graph. We denote by $H_{10}^-$ the graph obtained from $H_{10}$ by removing an arbitrary edge.

\begin{fact}\label{Fa2Extension2}
    If $H\in\mathcal{H}_3\cup\bigcup_{n=5}^{15}\mathcal{H}_n\cup\mathcal{H}'_{10}\cup\{H_{10}^-\}$, then $\{x,y\}$ is a $2$-extendable pair if and only if $H$ has {an $(x,y)$-$3$-thread}, unless $H\cong H_8$ and $x,y$ have distance $2$ in $H$ and both have degree $2$.
\end{fact}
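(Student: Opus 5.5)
The ``if'' direction is exactly Fact~\ref{Fa2Extension}, since every graph in the list has no $(1\bmod 3)$-cycles. So the content of the statement is the converse. For every graph $H$ in the list and every pair $\{x,y\}$ that is \emph{not} joined by a $3$-thread, I have to exhibit an $(x,y)$-path of length $1\bmod 3$ (the unique exception in $H_8$ aside).

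My plan is a structured case check that exploits the recursive definition of the families instead of brute force over all pairs.

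First I will treat the base graphs $H_3,H_5,\dots,H_{11}$, the graphs of $\mathcal{H}'_{10}$, and $H_{10}^-$ by hand, using their symmetry.

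\begin{itemize}
\item For cycles ($H_3$, $H_5$), the two arcs between $x$ and $y$ have lengths $a$ and $b$ with $a+b\in\{3,5\}$. One checks directly which pairs lack a $(1\bmod 3)$-path; only the antipodal pairs of $C_5$ survive, and those are joined by a $3$-thread.
\item For the Petersen graph, vertex-transitivity and distance-transitivity reduce the check to two pairs: an adjacent pair (trivially a path of length $1$) and a pair at distance $2$. For the latter I exhibit a path of length $4$ or $7$.
\item For $H_{10}^-$, removing an edge $uv$ creates new candidate pairs. Here I use that $u,v$ now have degree $2$, and check the few orbits under the stabilizer of the removed edge.
\item For the subdivided graphs $H_7,H_8,H_9,H_{11}$, I split pairs by type: branch--branch, branch--subdivision, and subdivision--subdivision. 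A subdivision vertex $s$ on edge $ab$ lets me route into $x$ via $a$ or $b$, which adjusts the length by $1$ or by the length of some other $a$--$b$ path. Typically one of these two choices lands in residue $1$.
\item In $H_8$, the two distance-$2$ subdivision vertices fail this way, because all their connecting paths have lengths $\equiv 0,2$. That pair is the stated exception, and I will verify directly that no $(1\bmod3)$-path joins it.
\end{itemize}

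Second, for the recursively built members (sizes $12$ to $15$, plus the members of $\mathcal{H}_8$ and $\mathcal{H}'_{10}$ that are extensions), I will induct on the construction. Let $L$ be an $i$-extension of $H$ along a thread $P$ with ends $u,v$. I distinguish pairs by location.

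\begin{itemize}
\item \emph{Both $x,y\in V(H)$.} Paths in $H$ persist in $L$. If $H$ already has an $(x,y)$-path of length $1\bmod 3$, I am done. Otherwise, by induction, $x,y$ are joined by a $3$-thread in $H$, and I check whether that thread survives in $L$. It fails to survive only if $P$ attaches at an interior vertex of it. In that case the new path $P$ gives detours that shift residues, and I produce the required path explicitly.
\item \emph{At least one of $x,y$ is interior to $P$.} I write the path as the segment of $P$ from $x$ to $u$ or $v$, followed by a path in $H$ to $y$. The pair $\{u,v\}$ is extendable, so $H$ has $u$--$v$ paths in at most two residues. Using an $H$-path from $u$ to $y$ and one from $v$ to $y$, the available lengths cover residue $1$ unless the configuration is itself a $3$-thread.
\end{itemize}

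The exception in $H_8$ must be handled separately here, since it is exactly where the induction hypothesis fails. For $L_2(\mathcal H_8)$ members, I will check whether $P$ is attached in a way that breaks the exception.

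The main obstacle is this last inductive step. The residue sets of $u$--$y$ and $v$--$y$ paths depend on the specific host graph, so a uniform argument may not close. My fallback is to verify the finitely many graphs of order at most $15$ directly (up to isomorphism, using the figures to enumerate the orbits of pairs).
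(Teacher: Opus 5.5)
Your proposal is essentially the paper's argument: the paper also gets the ``if'' direction from Fact~\ref{Fa2Extension} and proves the converse by directly checking the finitely many listed graphs, with the details omitted. Your inductive bookkeeping over extensions is a reasonable way to organize that check, but its second case (a vertex interior to the new thread) is only heuristic as written, so your proof genuinely rests on the direct-verification fallback, just as the paper's does. One minor correction: $H_8$ has four exceptional pairs, one at each branch vertex of the subdivided $4$-cycle, not a single pair.
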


\begin{proof}
    The assertion can be proved by checking the graphs in $\mathcal{H}_3\cup\bigcup_{n=5}^{15}\mathcal{H}_n\cup\mathcal{H}'_{10}\cup\{H_{10}^-\}$ (see Figures \ref{FiGraphHi}, \ref{FiGraphL2H6}, and \ref{FiClassH'10}). We omit the details here.
\end{proof}

\begin{fact}\label{FaHnNoEmptyeH}
    For $n\geq 3$ with $n\neq 4$, 
    we have $\mathcal{H}_n\neq\emptyset$. Moreover, every graph $H\in\mathcal{H}_n$ is $2$-connected, contains no $(1\bmod 3)$-cycles, and satisfies $e(H)=f(n)$. 
\end{fact}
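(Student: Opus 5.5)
The proof will go by induction on $n$, following the recursive definition of $\mathcal{H}_n$. The base graphs are handled directly. These are $H_3,H_5,\dots,H_{11}$, $L_2(H_6)$, the members of $\mathcal{H}'_{10}$, $\mathcal{H}_{12}$ and $\mathcal{H}_{13}=\{L_3(H_{10})\}$. For each of these the plan has four parts.
\begin{itemize}
\item Exhibit the required extendable pair, so that the class is nonempty. For $L_2(H_6)$ and $\mathcal{L}_2(\mathcal{H}_8)$, use Fact~\ref{Fa2Extension} applied to a $3$-thread. For $\mathcal{L}_3(H_7)$, $\mathcal{L}_3(H_9)$ and $L_3(H_{10})$, use Fact~\ref{Fa3Extension} applied to any edge.
\item Count the edges: the $H_n$ have $3,5,7,9,10,12,15,15$ edges for $n=3,5,\dots,11$, which matches Table~\ref{table:smalln}.
\item Check $2$-connectivity.
\item Check the absence of $(1\bmod 3)$-cycles.
\end{itemize}
For the base graphs $H_n$ the last two checks are a finite inspection. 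The cycle check is easiest by listing the cycle lengths: the Petersen graph has cycles only of lengths $5,6,8,9$, and $H_9$, as a subdivided $K_{3,3}$, only of lengths $6,8,9$, and so on. For every graph that is built as an extension, both properties come for free. Fact~\ref{FaExtensionedge} rules out $(1\bmod 3)$-cycles. An extension of a $2$-connected graph adds an ear between two distinct vertices, and adding an ear preserves $2$-connectivity.

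For the recursive step I will use Fact~\ref{FaExtensionedge}, which says that an $i$-extension adds $i$ vertices and $i+1$ edges. So every member of $\mathcal{L}_2(\mathcal{H}_{n-2})$ has $f(n-2)+3$ edges, and every member of $\mathcal{L}_3(\mathcal{H}_{n-3})$ has $f(n-3)+4$ edges. Fact~\ref{Fafn} then gives the edge count.
\begin{itemize}
\item For odd $n\ge15$ we have $n\notin\{7,10,12,13,16\}$, so $f(n)=f(n-2)+3$.
\item For even $n\ge14$ with $n\ne16$ we have $f(n)=f(n-2)+3=f(n-3)+4$.
\item For $n=16$ we have $f(16)=f(13)+4$.
\end{itemize}
The same computation applies inside the small cases: $f(8)=f(6)+3$, $\mathcal{H}'_{10}$ has $13$ edges, $f(12)=f(9)+4=13+3$, and $f(13)=f(10)+4$.

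The only real issue is nonemptiness: each class $\mathcal{H}_m$ used in the recursion must contain a graph that has the needed extendable pair.
\begin{itemize}
\item For $\mathcal{L}_3$ this is automatic, since any edge works by Fact~\ref{Fa3Extension}.
\item For $\mathcal{L}_2$ we need a $3$-thread, which gives a $2$-extendable pair by Fact~\ref{Fa2Extension}. Every $2$- or $3$-extension contains a $3$-thread, namely the newly added path of length $3$ or $4$. So every member of $\mathcal{H}_m$ for $m\ge12$ has a $3$-thread, as does every member of $\mathcal{H}'_{10}$ and $L_2(H_6)$. $H_8$ has one as well, since it is $K_4$ with a subdivided $C_4$.
\end{itemize}
Odd $n\ge15$ then needs only $\mathcal{H}_{n-2}\neq\emptyset$ and a thread in some member, which holds by induction. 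Since $n-2\ge13$, every member of $\mathcal{H}_{n-2}$ is itself an extension and so carries a thread. Even $n$ uses $\mathcal{L}_3$, which is always available. With $n=13,14,15,16$ as anchors, every class from $\mathcal{H}_{12}$ onward is reached.

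I expect the main obstacle to be the finite verification that the base graphs, especially $H_9$, $H_{10}$, $H_{11}$ and the members of $\mathcal{H}_{12}$, have no $(1\bmod 3)$-cycle. To avoid grinding through all cycles, I would first reduce to the extensions, which are covered by Fact~\ref{FaExtensionedge}. For the remaining graphs, which are subdivisions of $K_4$, $K_{3,3}$ or $K_5^-$ plus the Petersen graph, I would enumerate cycles of the underlying small graph and add the subdivision weights.
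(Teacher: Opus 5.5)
Your proposal is correct and follows the paper's argument: nonemptiness via Facts~\ref{Fa3Extension} and~\ref{Fa2Extension} (every edge is $3$-extendable, a $3$-thread gives a $2$-extendable pair, and every extension carries a $3$-thread), direct inspection of the base graphs $H_n$, and Fact~\ref{FaExtensionedge} with Fact~\ref{Fafn} and the ear argument for everything built by extensions. There are two harmless slips: $H_8$ has no $3$-thread, since its four degree-$2$ vertices are pairwise nonadjacent and its $2$-extendable pairs are the exceptional ones in Fact~\ref{Fa2Extension2} (but $\mathcal{L}_2(\mathcal{H}_8)\neq\emptyset$ already follows from $L_2(H_6)$), and $H_9$ also has $5$-cycles, namely $4$-cycles of $K_{3,3}$ through exactly one subdivided matching edge, which are still not $1 \bmod 3$.
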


\begin{proof}
    Recall that every 2-extension or 3-extension of a graph always contains a $3$-thread. Therefore, by Fact \ref{Fa2Extension}, it contains a 2-extendable pair. By Facts~\ref{Fa3Extension} and \ref{Fa2Extension}, we see that $\mathcal{H}_n\neq\emptyset$ for $n\geq 3$ with $n\neq 4$. Now, let $H\in\mathcal{H}_n$. If $H=H_i$ for some $i\in[3,11]\backslash\{4\}$, then one can check that $H$ is $2$-connected, contains no $(1\bmod 3)$-cycles, and satisfies $e(H)=f(n)$. For other cases, $H$ is a 2-extension or 3-extension of some graphs without $(1\bmod 3)$-cycles. Then the assertion can be deduced by Fact~\ref{FaExtensionedge}.
\end{proof}

\begin{fact}\label{FaUniqueHngeq13odd}
    For odd $n\geq 13$, $\mathcal{H}_n$ contains exactly one graph (up to isomorphism). The graph is obtained from a Petersen graph $P$ with an edge $xy$, by adding a new vertex $z$, the edge $xz$, and $k$ internally vertex-disjoint $(y,z)$-$3$-threads whose internal vertices are disjoint from $P$, where $k=\frac{1}{2}(n-11)$ (see Figure \ref{FiGraphHnoddgeq13}).
\end{fact}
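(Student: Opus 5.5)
Since the definition gives $\mathcal{H}_{13}=\{L_3(H_{10})\}$ and $\mathcal{H}_n=\mathcal{L}_2(\mathcal{H}_{n-2})$ for odd $n\ge 15$, the natural proof is induction on odd $n\ge 13$. Define $G_k$ as the graph in the statement: the Petersen graph $P$ with a fixed edge $xy$, a new vertex $z$ with edge $xz$, and $k$ internally disjoint $(y,z)$-$3$-threads. We will prove that $\mathcal{H}_{11+2k}=\{G_k\}$ up to isomorphism, for every $k\ge1$.

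\textbf{Base case $n=13$ ($k=1$).} First identify the $3$-extendable pairs of $H_{10}$. By Fact~\ref{Fa3Extension}, every edge is one. A non-adjacent pair $\{u,v\}$ of the Petersen graph is joined by paths of lengths $3$ and $6$, hence by a $(0\bmod 3)$-path, so it is not one. Thus the $3$-extendable pairs are exactly the edges. Since the Petersen graph is edge-transitive, $L_3(H_{10})$ is well defined: it is $H_{10}$ together with an $(x,y)$-path $x\,z\,w_1\,w_2\,y$ of length $4$ for an edge $xy$. Writing the path as the edge $xz$ followed by the $(z,y)$-$3$-thread $z w_1 w_2 y$ gives exactly $G_1$.

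\textbf{Inductive step.} Assume $\mathcal{H}_{n-2}=\{G_{k-1}\}$ with $n=11+2k\ge15$. By definition, $\mathcal{H}_n$ consists of all $2$-extensions of $G_{k-1}$, so we must show that every $2$-extendable pair of $G_{k-1}$ is equivalent, under an automorphism, to $\{y,z\}$. Adding a $3$-thread between $y$ and $z$ then produces $G_k$.

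\begin{itemize}
\item \textbf{Existence.} $\{y,z\}$ is a $2$-extendable pair by Fact~\ref{Fa2Extension}, because $G_{k-1}$ has a $(y,z)$-$3$-thread.
\item \textbf{$n=15$.} Here $G_1\in\mathcal{H}_{13}$, and Fact~\ref{Fa2Extension2} says the $2$-extendable pairs of $G_1$ are exactly the pairs joined by a $3$-thread. Any $3$-thread in $G_1$ must have degree-$2$ internal vertices, which occur only on the added path. The only $3$-thread is therefore $z w_1 w_2 y$, so the unique pair is $\{y,z\}$.
\item \textbf{$n\ge17$.} Fact~\ref{Fa2Extension2} covers only orders up to $15$, so argue directly. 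Suppose $\{u,v\}$ is $2$-extendable, i.e.\ $G_{k-1}$ has no $(u,v)$-path of length $1\bmod 3$. Set $Q=P\cup\{xz\}$ and let $T_1,\dots,T_{k-1}$ be the threads. Any $(u,v)$-path using two threads can have one thread swapped for another of equal length, and threads can be rerouted. So the set of $(u,v)$-path lengths mod $3$ is already realised inside $Q$ plus at most two threads. This reduces the question to the case $k-1\le 3$, which is a finite check (by computer or by hand, in the same spirit as Fact~\ref{Fa2Extension2}).
\end{itemize}

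To avoid that finite check, the alternative is to show directly that $\{u,v\}\neq\{y,z\}$ is never $2$-extendable. Two cases arise:

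\begin{itemize}
\item \textbf{$u,v\in V(P)$.} The Petersen graph has $(u,v)$-paths of every residue mod $3$ (adjacent pairs: lengths $1,2,3$; distance-$2$ pairs: lengths $2,3,4$). In particular there is one of length $1\bmod 3$, so the pair is not $2$-extendable.
\item \textbf{$u$ or $v$ outside $P$.} Here $u$ or $v$ is $z$ or an internal thread vertex. Combine the extra route from $y$ to $z$ through another thread (length $3$) with the route through $x$ (length $1$ via $xy$, or other lengths via paths inside $P$). These adjust the residue and yield a $(1\bmod 3)$-path.
\end{itemize}

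\textbf{Expected obstacle.} The main difficulty is the case analysis for pairs involving $z$ or thread vertices. The key tool there is that $P$ offers $(x,y)$-paths of all three residues while avoiding any prescribed vertex, which gives the flexibility needed. Uniqueness up to isomorphism then follows from the symmetry among the threads.
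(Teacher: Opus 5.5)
There is a genuine gap in the step $n=15$. Your overall induction matches the paper's: $L_3(H_{10})$ is unique by edge-transitivity, and then you identify the $2$-extendable pairs of $G_{k-1}$. Your base case is fine, but the $n=15$ step rests on a false claim.

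In $G_1$ the new vertex $z$ has degree $2$, since its only neighbours are $x$ and $w_1$. So the added path $x\,z\,w_1\,w_2\,y$ has three internal vertices of degree $2$ and contains \emph{two} $3$-threads: $z\,w_1\,w_2\,y$ and $x\,z\,w_1\,w_2$. Hence $\{x,w_2\}$ is a second $2$-extendable pair. Directly: the only $(x,w_2)$-paths are $x\,z\,w_1\,w_2$, of length $3$, and an $(x,y)$-path of $P$ followed by $y\,w_2$, of length $\ell+1$ with $\ell\not\equiv 0\pmod 3$.

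The conclusion $\mathcal{H}_{15}=\{G_2\}$ survives only because the two $2$-extensions are isomorphic. The Petersen graph is arc-transitive, so some automorphism swaps $x$ and $y$. Extending it by reversing the added path ($z\leftrightarrow w_2$, $w_1$ fixed) gives an automorphism of $G_1$ carrying $\{x,w_2\}$ to $\{y,z\}$. The paper makes exactly this observation, and it is missing from your argument.

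The same example refutes your ``direct'' alternative, which can hold only for $G_{k-1}$ with $k-1\ge 2$. Its ``key tool'' is also false. Since $xy\in E(P)$ and $P$ has no $(1\bmod 3)$-cycles, $P$ has no $(x,y)$-path of length $0\bmod 3$; this is precisely why edges are $3$-extendable, which you used in your base case. Likewise, adjacent Petersen vertices have no paths of length $2$ or $3$, because the girth is $5$. Your bullet for $u,v\in V(P)$ is saved only because an adjacent pair is joined by the edge itself.

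For $n\ge 17$ your reduction is imprecise. A $(u,v)$-path can use three threads: partial ones through $u$ and $v$, plus one full $(y,z)$-traversal. You would then need a separate check of $G_3$, of order $17$, which lies outside the range of Fact~\ref{Fa2Extension2}.

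The paper avoids this with a simple monotonicity. If $G_{k-1}$ has no $(u,v)$-path of length $1\bmod 3$, neither does any subgraph containing $u$ and $v$. So for $k-1\ge 3$, delete a thread whose interior avoids $u$ and $v$, and induct down to $G_2$. There $z$ has degree $3$ and both $3$-threads join $y$ and $z$. By Fact~\ref{Fa2Extension2} (as $G_2\in\mathcal{H}_{15}$), $\{y,z\}$ is therefore the only $2$-extendable pair. The induction must bottom out at $G_2$, not $G_1$, precisely because of the extra pair $\{x,w_2\}$.
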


\begin{figure}[htbp]
\centering
\begin{tikzpicture}[scale=0.4]

\foreach \x in {1,2,...,5} {\draw[fill=black] (\x*72+18:3) {coordinate (u\x)} circle (0.1);
\draw[fill=black] (\x*72+18:1.7) {coordinate (v\x)} circle (0.1);
\draw[thick] (u\x)--(v\x);}
\draw[thick] (u1)--(u2)--(u3)--(u4)--(u5)--(u1);
\draw[thick] (v1)--(v3)--(v5)--(v2)--(v4)--(v1);
\draw[fill=black] (2.85,-2.43) {coordinate (z)} circle (0.1);
\draw[thick] (z)--(u4);
\draw[fill=black] (3.85,-1.59) {coordinate (z1)} circle (0.1);
\draw[fill=black] (3.85,0.09) {coordinate (y1)} circle (0.1);
\draw[thick] (u5)--(y1)--(z1)--(z);
\draw[fill=black] (5.2,-1.59) {coordinate (z2)} circle (0.1);
\draw[fill=black] (5.2,0.09) {coordinate (y2)} circle (0.1);
\draw[thick] (u5)--(y2)--(z2)--(z);
\draw[fill=black] (7,-1.59) {coordinate (zk)} circle (0.1);
\draw[fill=black] (7,0.09) {coordinate (yk)} circle (0.1);
\draw[thick] (u5)--(yk)--(zk)--(z);
\foreach \x in {1,2,...,4} \draw[fill=black] (0.35*\x+5.2,-0.75) circle (0.05);
\node[below] at (u4) {\small $x$}; \node[below] at (z) {\small $z$}; \node[above] at (u5) {\small $y$};
\node at (4.4,0) {\small $y_1$}; \node at (5.75,0) {\small $y_2$}; \node at (7.6,0) {\small $y_k$};
\node at (4.4,-1.5) {\small $z_1$}; \node at (5.75,-1.5) {\small $z_2$}; \node at (7.6,-1.5) {\small $z_k$};

\end{tikzpicture}
\caption{The unique graph in $\mathcal{H}_n$, $n\geq 13$ is odd and $k=\frac{1}{2}(n-11)$. }\label{FiGraphHnoddgeq13}
\end{figure}

\begin{proof}
    We denote by $\varPi_k$ the graph obtained from a Petersen graph $P$ with an edge $xy$ by adding a new vertex $z$, the edge $xz$, and $k$ internally vertex-disjoint $(y,z)$-$3$-threads whose internal vertices are disjoint from $P$.
    Let $H$ be a graph in $\mathcal{H}_n$. We will show that $H\cong\varPi_k$. If $n=13$, then $H$ is a 3-extension of the Petersen graph $H_{10}$. Note that $H_{10}$ is edge-transitive. By Fact \ref{Fa3Extension}, $H_{10}$ has exactly one 3-extension, and $H=L_3(H_{10})\cong\varPi_1$. If $n=15$, then $H$ is a 2-extension of $\varPi_1$. We label the vertices of $\varPi_1$ as in Figure \ref{FiGraphHnoddgeq13}. One can check that $\varPi_1$ has two 2-extendable pairs, namely $\{x,y_1\}$ and $\{y,z\}$. Note that the 2-extensions of $\varPi_1$ at $\{x,y_1\}$ and $\{y,z\}$ are isomorphic. So $H\cong L_2(\varPi_1)=\varPi_2$. 
    
    Now, we claim that for $k\geq 2$, $\varPi_k$ has exactly one 2-extendable pair, namely $\{y,z\}$. The case $k=2$ can be checked directly; see also Fact~\ref{Fa2Extension}. For $k\geq 3$, let $\{u,v\}$ be a $2$-extendable pair of $\varPi_k$. Removing a $(y,z)$-$3$-thread whose internal vertices contain neither $u$ nor $v$, we obtain a copy of $\varPi_{k-1}$ in which $\{u,v\}$ is still $2$-extendable. 
    Hence, by induction, $\{u,v\}=\{y,z\}$. 
    This proves the claim. 
\end{proof}

\subsection{Extremal graphs for $ex(n,\mathcal{C}_{1\bmod 3})$}\label{subsec:allextremal}
 
By Theorem~\ref{thm:1mod3}, when $9 \mid (n-1)$, the extremal graphs for $ex(n,\mathcal{C}_{1\bmod 3})$ are precisely the graphs whose blocks are isomorphic to the Petersen graph $H_{10}$. However, a complete characterization for the remaining cases $9 \nmid (n-1)$ was not previously known. In this subsection, using the graphs in $\mathcal{H}_n$ from the previous section together with Theorem~\ref{thm:main-1mod3}, we establish a complete characterization of the extremal graphs for $ex(n,\mathcal{C}_{1\bmod 3})$ that holds for all $n$, regardless of whether $9$ divides $n-1$. 
We remark that in \cite{LiPaSh}, all extremal graphs for $\operatorname{ex}(n,\mathcal{C}_{2\bmod 4})$ are characterized, even when $4 \nmid (n-1)$.

\begin{theorem}
A graph $G$ is extremal for $ex(n,\mathcal{C}_{1\bmod 3})$ if and only if $G$ is connected and consists of blocks $B_1,B_2,\ldots,B_k$ with $n(B_i)=n_i$ such that:
\begin{itemize}
    \item[$(1)$] $\sum_{i=1}^kn_i=n+k-1$;
    \item[$(2)$] there exists $t\in[0,4]$ such that $n_i=10$ for all $i\in[t+1,k]$, and\\
    \indent $(a)$ if $t=1$, then $n_1\in\{2,3,6,7,8,9,13,15,17\}$;\\
    \indent $(b)$ if $t=2$, then either $n_1\in\{2,3,6,7,13,15\}$, $n_2=3$, or $(n_1,n_2)=(2,7)$;\\
    \indent $(c)$ if $t=3$, then $(n_1,n_2,n_3)=(2,3,3)$ or $(3,3,3)$ or $(13,3,3)$;\\
    \indent $(d)$ if $t=4$, then $(n_1,n_2,n_3,n_4)=(2,3,3,3)$ or $(3,3,3,3)$;
    \item[$(3)$] $B_i\cong K_2$ if $n_i=2$, and $B_i\in\mathcal{H}_{n_i}$ if $n_i\geq 3$.
\end{itemize}
\end{theorem}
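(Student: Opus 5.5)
Since a graph with no $(1\bmod 3)$-cycles has none in any block, and blocks can be glued at cut-vertices without creating new cycles, the extremal problem decomposes over blocks. Concretely, for a connected graph $G$ with blocks $B_1,\dots,B_k$ we have $\sum_i n(B_i)=n+k-1$, and an extremal graph must be connected: adding a bridge between components creates no cycle. So $G$ is extremal if and only if each block is itself extremal among $2$-connected (or $K_2$) blocks of its order, and the multiset of block orders maximizes $\sum_i g(n_i)$ subject to $\sum_i (n_i-1)=n-1$. Here $g(2)=1$ and $g(m)=f(m)$ for $m\ge 3$, $m\neq 4$; the value $m=4$ is forbidden, since no $2$-connected $4$-vertex block avoids $4$-cycles. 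By Theorem~\ref{thm:main-1mod3}, the blocks of order $m\ge3$ attaining $g(m)$ are exactly the graphs in $\mathcal{H}_m$. This gives condition (3), and condition (1) is the block-count identity. It remains to characterize the optimal order multisets, i.e.\ to prove condition (2).

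For that, I would work with the weight $w(m)=g(m)-\tfrac53(m-1)$, the excess over the Petersen density. We have $w(10)=0$. Using the table of $f$, one checks that $w(m)<0$ for all other $m$, and for large $m$ we get $w(m)\approx -\tfrac16 m$. By Theorem~\ref{thm:1mod3}, the optimum of $\sum_i g(n_i)$ is $15q+\lfloor 3r/2\rfloor$ with $n-1=9q+r$. So we must find all multisets $\{n_i\}$ with $\sum_i(n_i-1)=n-1$ whose total deficit $\sum_i w(n_i)$ equals $15q+\lfloor3r/2\rfloor-\tfrac53(n-1)=\lfloor 3r/2\rfloor-\tfrac53 r$, which lies in $[-\tfrac23,0]$. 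The deficit of each block is at least $\tfrac{1}{3}$ in absolute value for every $m\neq 10$ (approximately; I would tabulate this), and the deficits grow linearly with $m$. Hence only boundedly many non-Petersen blocks can occur, each of bounded order; by the $\tfrac13$ granularity this bound is at most $4$, which gives $t\le 4$. Any large block has deficit far beyond $\tfrac23$, so all non-Petersen blocks have order at most about $17$.

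The remaining step is a finite enumeration. For each residue $r\in[0,8]$, I would list the multisets of at most four orders from $\{2,3,5,\dots,17\}$ with $\sum_i(n_i-1)\equiv r\pmod 9$ and total deficit exactly $\lfloor 3r/2\rfloor-\tfrac53 r$. Sums that exceed $r$ by multiples of $9$ are allowed, since a non-Petersen block then replaces Petersen blocks. The result should match lists (a)--(d). For example, for $t=1$ the admissible orders $n_1$ satisfy $n_1-1\equiv r\pmod 9$ with $w(n_1)$ optimal. This includes $13$, $15$ and $17$ but excludes $11$, because $w(11)$ is too negative compared with $H_{10}$ glued to $K_2$.

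The main obstacle is making this enumeration complete and correct. The irregular small values of $f$ at $n=5,10,12,13,14$ must be handled exactly, and blocks with $n_i>17$ must be rigorously excluded. To exclude them, I would show $w(m)<-\tfrac23$ for all $m\ge 18$ (and for the other excluded orders) directly from the formula $f(m)=\lceil 3m/2\rceil-1$. The finitely many cases with small $m$ would then be verified by a table of $w$.
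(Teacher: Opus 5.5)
Your overall strategy is sound and would prove the statement: reduce to blocks, measure each block by its deficit $w(m)=g(m)-\tfrac53(m-1)$ against Petersen density, then enumerate. However, the step that is supposed to make the enumeration finite rests on a miscalculation. The target deficit $D(r)=\lfloor 3r/2\rfloor-\tfrac53 r$ takes the values $0,-\tfrac23,-\tfrac13,-1,-\tfrac23,-\tfrac43,-1,-\tfrac53,-\tfrac43$ for $r=0,\ldots,8$. So it lies in $[-\tfrac53,0]$, with minimum at $r=7$, not in $[-\tfrac23,0]$. Taken literally, your exclusion criterion ``$w(m)<-\tfrac23$'' would discard the orders $6,7,8,9,13,15,17$, and every one of them appears in (a); for instance $H_7$ alone is extremal for $n=7$, yet $w(7)=-1$. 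Likewise, a budget of $\tfrac23$ with granularity $\tfrac13$ allows at most two non-Petersen blocks, not four. So neither ``$t\le 4$'' nor ``orders at most $17$'' follows from what you wrote.

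With the correct budget $\tfrac53$ your plan goes through.
\begin{itemize}
\item For $m\ge 15$, $w(m)=\tfrac23-\tfrac m6$ ($m$ even) or $\tfrac76-\tfrac m6$ ($m$ odd). Hence $w(m)\le -2$ for $m\ge 18$.
\item Also $w(12)=-\tfrac73$, $w(14)=-\tfrac83$ and $w(16)=-2$, so these orders exceed the budget.
\item The orders $5,8,11,17$ have $w=-\tfrac53$, so each can only be the unique non-Petersen block, and this forces $r=7$. That keeps $8$ and $17$ and rules out $5$ and $11$ (residues $4$ and $1$).
\item Since $|w(m)|\ge\tfrac13$ for $m\neq 10$, there are at most five non-Petersen blocks. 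Five would all have to be triangles, giving $r=1$, whose target is $-\tfrac23\neq-\tfrac53$. Hence $t\le 4$.
\end{itemize}
The remaining check, over multisets from $\{2,3,6,7,9,13,15\}$ with $\sum_i|w(n_i)|\le\tfrac53$ plus the singletons $8$ and $17$, reproduces (a)--(d) exactly.

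Your route differs from the paper's. The paper works with $f_1=ex(\cdot,\mathcal{C}_{1\bmod 3})$ directly and proves a heredity fact: every subcollection of an optimal collection of block orders is optimal for its own order, shown by gluing constructions. It then identifies the set $A$ where $f_1=f_2$, tabulates all pairs from $A$, lifts to triples and quadruples, and bounds $t$ by showing that no five orders from $A\setminus\{10\}$ work.

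Your weight carries the same single-block information. Indeed $f_1(m)-\tfrac53(m-1)=D((m-1)\bmod 9)$, so $m\in A$ exactly when $w(m)=D((m-1)\bmod 9)$. The difference is that your single inequality $\sum_i|w(n_i)|\le\tfrac53$ replaces both the heredity lemma and the pair table. It bounds the number and the orders of exceptional blocks a priori, which makes completeness of the case check transparent once the budget is computed correctly.
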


\begin{proof} 
For simplicity, we define $f_1(n)=ex(n,\mathcal{C}_{1\bmod 3})$ for $n\geq 2$, and 
$$f_2(n)=\left\{\begin{array}{ll}
         1, & n=2;  \\
         ex_2(n,\mathcal{C}_{1\bmod 3}), & n=3\mbox{ or }n\geq 5. 
\end{array}\right.$$
Note that $f_2(n)\le f_1(n)$ and $f_1(n)+f_1(10)=f_1(n+9)$ for all $n\geq 2$.
Table \ref{Taf1f2} shows the values of $f_1(n)$ and $f_2(n)$ (see Theorem~\ref{thm:1mod3}) for $n\leq 17$.

\begin{table}[htbp]
\centering 
\begin{tabular}{c||c|c|c|c|c|c|c|c|c|c|c|c|c|c|c|c}
\hline 
$n$      & 2 & 3 & 4 & 5 & 6 & 7 & 8 & 9 & 10 & 11 & 12 & 13 & 14 & 15 & 16 & 17 \\\hline
$f_1(n)$ & $\underline{1}$ & $\underline{3}$ & 4 & 6 & $\underline{7}$ & $\underline{9}$ & $\underline{10}$ & $\underline{12}$ & $\underline{15}$ & 16 & 18 & $\underline{19}$ & 21 & $\underline{22}$ & 24 & $\underline{25}$ \\\hline
$f_2(n)$ & $\underline{1}$ & $\underline{3}$ & - & 5 & $\underline{7}$ & $\underline{9}$ & $\underline{10}$ & $\underline{12}$ & $\underline{15}$ & 15 & 16 & $\underline{19}$ & 19 & $\underline{22}$ & 23 & $\underline{25}$ \\\hline
\end{tabular}
\caption{The values of $f_1(n)$ and $f_2(n)$ for $n\leq 17$, with equal values underlined.}
\label{Taf1f2}
\end{table} 

We first prove the following two facts.

    \begin{fact}\label{Faf1n1nk}
        Let $n_1,n_2,\ldots,n_k$ be positive integers. If $\sum_{i=1}^kf_1(n_i)=f_1(\sum_{i=1}^kn_i-k+1)$, then for any subset $I\subseteq[k]$, it holds $\sum_{i\in I}f_1(n_i)=f_1(\sum_{i\in I}n_i-|I|+1)$.
    \end{fact}

    \begin{proof}
        Let $F_i$ be an extremal graph for $ex(n_i,\mathcal{C}_{1\bmod 3})$, $i\in[k]$, with $u_i\in V(F_i)$. Note that $n(F_i)=n_i$ and $e(F_i)=f_1(n_i)$. Let $F$ be a graph obtained from $\bigcup_{i\in I}F_i$ by identifying all $u_i$ to a common vertex. It follows that $n(F)=\sum_{i\in I}n_i-|I|+1$ and $e(F)=\sum_{i\in I}f_1(n_i)$. Thus we have $\sum_{i\in I}f_1(n_i)\leq f_1(\sum_{i\in I}n_i-|I|+1)$. Let $F'$ be an extremal graph for $ex(\sum_{i\in I}n_i-|I|+1,\mathcal{C}_{1\bmod 3})$ with $u'\in V(F')$, and let $F''$ be a graph obtained from $F'\cup\bigcup_{i\in[k]\backslash I}F_i$ by identifying $u'$ and all $u_i$ to a common vertex. It follows that $n(F'')=\sum_{i=1}^kn_i-k+1$ and $e(F'')=f_1(\sum_{i\in I}n_i-|I|+1)+\sum_{i\in[k]\backslash I}f_1(n_i)$. Thus we have $f_1(\sum_{i\in I}n_i-|I|+1)+\sum_{i\in[k]\backslash I}f_1(n_i)\leq f_1(\sum_{i=1}^kn_i-k+1)$. This implies that $f_1(\sum_{i\in I}n_i-|I|+1)\leq\sum_{i\in I}f_1(n_i)$.
    \end{proof}

    \begin{fact}\label{FaAnn1nt}
        Set $A=\{2,3,6,7,8,9,10,13,15,17\}$. We have
        \begin{itemize}
            \item[$(1)$] $f_1(n)=f_2(n)$ if and only if $n\in A$;
            \item[$(2)$] if $n_1,n_2\in A\backslash\{10\}$, then $f_1(n_1)+f_1(n_2)=f_1(n_1+n_2-1)$ if and only if (up to symmetry) $n_1\in\{2,3,6,7,13,15\}$, $n_2=3$, or $(n_1,n_2)=(2,7)$;
            \item[$(3)$] if $n_1,n_2,n_3\in A\backslash\{10\}$, then $\sum_{i=1}^3f_1(n_i)=f_1(\sum_{i=1}^3n_i-2)$ if and only if (up to symmetry)  $(n_1,n_2,n_3)=(2,3,3)$ or $(3,3,3)$ or $(13,3,3)$;
            \item[$(4)$] if $n_1,n_2,n_3,n_4\in A\backslash\{10\}$, then $\sum_{i=1}^4f_1(n_i)=f_1(\sum_{i=1}^4n_i-3)$ if and only if (up to symmetry) $(n_1,n_2,n_3,n_4)=(2,3,3,3)$ or $(3,3,3,3)$;
            \item[$(5)$] if $n_1,n_2,n_3,n_4,n_5\in A\backslash\{10\}$, then $\sum_{i=1}^5f_1(n_i)<f_1(\sum_{i=1}^5n_i-4)$.
        \end{itemize}
    \end{fact}

    \begin{proof}
        (1) If $n\geq 18$, then one can compute $f_2(n)=\left\lceil\frac{3}{2}n\right\rceil-1<f_1(n)=15q+\left\lfloor\frac{3}{2}r\right\rfloor$, where $n-1=9q+r,\ 0\leq r\leq 8$. For $n\leq 17$, the assertion can be deduced by Table \ref{Taf1f2}.

    (2) We list the values $f_1(n_1)+f_1(n_2)$ and $f_1(n_1+n_2-1)$ in Table \ref{Taf1n1n2}, by which the assertion can be deduced. 

    (3) Suppose that $\sum_{i=1}^3 f_1(n_i)=f_1(\sum_{i=1}^3 n_i-2)$. By Fact \ref{Faf1n1nk}, $f_1(n_i)+f_1(n_j)=f_1(n_i+n_j-1)$ for $\{i,j\}\in\binom{[3]}{2}$. If $n_1\neq 3$ 
    and $n_2\neq 3$, then, up to symmetry, $(n_1,n_2)=(2,7)$ and $n_3=3$ by (2). But now $f_1(2)+f_1(7)+f_1(3)=13<f_1(10)=15$, a contradiction. So we can conclude that $n_2=n_3=3$. By (2) we have $n_1\in\{2,3,6,7,13,15\}$. One can compute that $f_1(n_1)+2f_1(3)\neq f_1(n_1+4)$ for  $n_1\in\{6,7,15\}$. On the other hand, one can compute that $f_1(2)+2f_1(3)=7=f_1(6)$, $3f_1(3)=9=f_1(7)$, and $f_1(13)+2f_1(3)=25=f_1(17)$.

    (4) Suppose that $\sum_{i=1}^4f_1(n_i)=f_1(\sum_{i=1}^4n_i-3)$. By Fact \ref{Faf1n1nk}, $\sum_{i\in I}f_1(n_i)=f_1(\sum_{i\in I}n_i-|I|+1)$ for $I\subseteq[4]$. By (3) we have, up to symmetry, $n_1=2$, $3$, or $13$ and $n_2=n_3=n_4=3$. On the other hand, one can compute that $f_1(2)+3f_1(3)=10=f_1(8)$, $4f_1(3)=12=f_1(9)$, and $f_1(13)+3f_1(3)=28<f_1(19)=30$.

    (5) Suppose that $\sum_{i=1}^5f_1(n_i)=f_1(\sum_{i=1}^5n_i-4)$. By Fact \ref{Faf1n1nk}, $\sum_{i\in I}f_1(n_i)=f_1(\sum_{i\in I}n_i-|I|+1)$ for $I\subseteq[5]$. By (4) we have, up to symmetry, $n_1=2$ or 3, and $n_2=n_3=n_4=n_5=3$. However, one can compute that $f_1(2)+4f_1(3)=13<f_1(10)=15$ and $5f_1(3)=15<f_1(11)=16$.
    \end{proof}

\begin{table}[htbp]
\centering 
\begin{tabular}{c|c|c|c|c|c|c|c|c|c|c}
\hline 
$n_1 \backslash n_2$ & 2 & 3 & 6 & 7 & 8 & 9 & 10 & 13 & 15 & 17 \\\hline
2 & 2,\ 3 & \underline{4,\ 4} & 8,\ 9 & \underline{10,\ 10} & 11,\ 12 & 13,\ 15 & \underline{16,\ 16} & 20,\ 21 & 23,\ 24 & 26,\ 27 \\\hline
3 & & \underline{6,\ 6} & \underline{10,\ 10} & \underline{12,\ 12} & 13,\ 15 & 15,\ 16 & \underline{18,\ 18} & \underline{22,\ 22} & \underline{25,\ 25} & 28,\ 30 \\\hline
6 & & & 14,\ 16 & 16,\ 18 & 17,\ 19 & 19,\ 21 & \underline{22,\ 22} & 26,\ 27 & 29,\ 31 & 32,\ 34 \\\hline
7 & & & & 18,\ 19 & 19,\ 21 & 21,\ 22 & \underline{24,\ 24} & 28,\ 30 & 31,\ 33 & 34,\ 36 \\\hline
8 & & & & & 20,\ 22 & 22,\ 24 & \underline{25,\ 25} & 29,\ 31 & 32,\ 34 & 35,\ 37 \\\hline
9 & & & & & & 24,\ 25 & \underline{27,\ 27} & 31,\ 33 & 34,\ 36 & 37,\ 39 \\\hline
 10 & & & & & & & \underline{30,\ 30} & \underline{34,\ 34} & \underline{37,\ 37} & \underline{40,\ 40} \\\hline
 13 & & & & & & & & 38,\ 39 & 41,\ 42 & 44,\ 46 \\\hline
 15 & & & & & & & & & 44,\ 46 & 47,\ 49 \\\hline
 17 & & & & & & & & & & 50,\ 52 \\\hline
\end{tabular}
\caption{The values of $f_1(n_1)+f_1(n_2)$ and $f_1(n_1+n_2-1)$ for $n_1,n_2 \in A$, with equal values underlined.} 
\label{Taf1n1n2}
\end{table}

Now let $G$ be an extremal graph for $ex(n,\mathcal{C}_{1\bmod 3})$. We have that $G$ is connected, for otherwise adding an edge between different components of $G$ does not create new cycles. Let $B_1,B_2,\ldots,B_k$ be the blocks of $G$, with $n_i=n(B_i)$. So $\sum_{i=1}^kn_i-k+1=n$ and the assertion (1) holds. 

If $n_i=2$, then clearly $B_i\cong K_2$. We have $n_i\neq 4$ for otherwise $B_i$ contains a 4-cycle. If $n_i\geq 3$, then we have $f_1(n_i)=f_2(n_i)$ and $B_i\in\mathcal{H}_{n_i}$; otherwise we can use an extremal graph for $ex(n_i,\mathcal{C}_{1\bmod 3})$ instead of $B_i$ to obtain a graph without $(1\bmod 3)$-cycles and with more edges than $G$. It follows that the assertion (3) holds; and by Fact \ref{FaAnn1nt}, $n_i\in A$ for all $i\in[k]$.

Since $G$ is extremal for $ex(n,\mathcal{C}_{1\bmod 3})$, we have $\sum_{i=1}^kf_1(n_i)=f_1(n)$. Let $I$ be an arbitrary nonempty subset of $[k]$. By Fact \ref{Faf1n1nk}, we see that $\sum_{i\in I}f_1(n_i)=f_1(\sum_{i\in I}n_i-|I|+1)$. 
By Fact \ref{FaAnn1nt}, all but at most four $n_i$'s are equal to $10$. Rearranging the blocks of $G$, we can assume that $n_i\neq 10$ for $i\in[1,t]$ and $n_i=10$ for $i\in[t+1,k]$, where $t\in[0,4]$. Now the assertion (2) can be deduced by Fact \ref{Faf1n1nk}. 
\end{proof}

\section{Some results needed in our proofs}
\label{sec:lemmas and theorems}

To find a $(1\bmod 3)$-cycle, a common and effective approach is to seek two internally vertex-disjoint paths between given vertices and then analyze their lengths to obtain a cycle satisfying the desired length condition. The lemmas below will play an important role in our proofs.

We denote by $N_2(G)$ the set of vertices with degree $2$ in $G$. 

\begin{lemma}[Bai {\it et al.} \cite{B+202503}]\label{lemma: 2 paths mod 3}
Let $G$ be a graph on at least $4$ vertices and $x,y$ two distinct vertices of $G$. If
\begin{itemize}
    \item[$(1)$] $G+xy$ is $2$-connected;
    \item[$(2)$] $N_2(G)\backslash\{x,y\}$ is an independent set; and
    \item[$(3)$] $G$ contains no $4$-cycles,
\end{itemize}
then $G$ contains two $(x,y)$-paths $P_1,P_2$ with $\ell(P_1)\not\equiv \ell(P_2) \pmod 3$. 
\end{lemma}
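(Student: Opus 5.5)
We argue by induction on $n(G)+e(G)$, taking a counterexample $G$ with $x,y$ that is minimal with respect to this quantity. Suppose, for contradiction, that all $(x,y)$-paths in $G$ have lengths in one common residue class $c$ modulo $3$.

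\textbf{Reductions.} We first remove degenerate cases. If $G$ has a cut-vertex $v$, then by (1) every block of $G$ lies on a block chain from $x$ to $y$; all cut-vertices of $G$ lie on this chain, since $G+xy$ is $2$-connected. We would like to apply induction to a single block $B$ of the chain containing its two attachment vertices $a,b$. Each $(x,y)$-path is then composed of fixed-residue segments. If one block $B$ yields two $(a,b)$-paths of different residues, then two $(x,y)$-paths of different residues follow. That block inherits (2) and (3): its degree-$2$ vertices other than $a,b$ have degree $2$ in $G$ too, unless they are cut-vertices, and cut-vertices are $a$ or $b$. So each block must either be small (fewer than $4$ vertices, i.e.\ $K_2$ or a triangle) or be a counterexample of smaller size. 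A triangle already gives paths of lengths $1$ and $2$ between $a$ and $b$, so every block is $K_2$. In that case $G$ is a path and $G+xy$ is a cycle. Condition (2) then forces $n\le 4$, and with $n\ge 4$ a path on four vertices has interior degree-$2$ vertices that are adjacent, which contradicts (2). Hence we may assume $G$ is $2$-connected.

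\textbf{Ear structure.} Next we take a suitable cycle $C$ through $x$ and $y$, or through $x$ alone, and attach ears. Since $G$ is $2$-connected, fix two internally disjoint $(x,y)$-paths $P,Q$ with $C=P\cup Q$; by assumption $\ell(P)\equiv\ell(Q)\equiv c$. Because $N_2(G)\setminus\{x,y\}$ is independent and $C_4\not\subseteq G$, the cycle $C$ is not all of $G$: some vertex of $C$ other than $x,y$ has degree at least $3$. Choose a $C$-ear $R$, that is, a path with ends $u,v$ on $C$ and interior outside $C$; such an ear exists by $2$-connectivity. Rerouting $P$ or $Q$ through $R$ gives new $(x,y)$-paths. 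If $u,v$ both lie on $P$, the rerouted path replaces the segment $P[u,v]$ by $R$, which forces $\ell(R)\equiv \ell(P[u,v])$. If $u\in P$ and $v\in Q$, we obtain the two crossing paths $xPuRvQy$ and $xQvRuPy$. Comparing these with $P$ and $Q$ yields the linear congruences $\ell(P[x,u])+\ell(R)+\ell(Q[v,y])\equiv c$, and similarly for the other. Adding them shows $2\ell(R)\equiv \ell(P[u,y])+\ell(Q[x,v])-\ell(P[x,u])-\ell(Q[v,y])+\dots$, which pins down $\ell(R)$ modulo $3$.

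\textbf{Main obstacle.} The hard part is showing that these congruence constraints eventually conflict. The plan is to pick $C$ and the ear $R$ extremally: for instance $R$ short, with $u,v$ at distance as small as possible along $C$. We then look at a second ear or an internal vertex of degree $2$. Each "forced residue" condition makes the segment structure very rigid, with all ears between nearby vertices having length congruent to the corresponding arc. When $u,v$ are at distance $1$ or $2$ on $C$, the forced lengths together with the ban on $C_4$ and the independence of degree-$2$ vertices leave only a few local configurations, and we would rule these out by hand. Alternatively, when the rigidity propagates, we contract an ear to a single edge or delete a degree-$2$ vertex, obtaining a smaller graph that still satisfies (1)–(3) and has the same residue behaviour; this contradicts minimality. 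Verifying that such a reduction preserves condition (2), without creating adjacent degree-$2$ vertices, and condition (3), without creating a $4$-cycle, is where I expect most of the case analysis to lie.
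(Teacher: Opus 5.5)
The genuine gap is that your argument stops exactly where the $2$-connected case begins. Two things up to that point are fine. Your reduction to the case that $G$ itself is $2$-connected is sound: a block on at least four vertices is a smaller instance, a triangle yields residues $1$ and $2$, and a path on at least four vertices violates (2). Your single-ear congruences are also correct, though adding the two crossing congruences gives $2\ell(R)+\ell(P)+\ell(Q)\equiv 2c$, i.e.\ simply $\ell(R)\equiv 0\pmod 3$, not the expression you wrote.

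The ``Main obstacle'' paragraph fixes no extremal choice, lists no configurations, and rules nothing out. This is not a routine omission. The ear constraints can never produce a contradiction by themselves. A theta graph whose three $(x,y)$-threads all have length $\equiv c$ satisfies (1) and has all $(x,y)$-paths in one residue class, so it satisfies every congruence you derive. It is excluded only by (2) and (3): a thread of length at least $3$ has two adjacent degree-$2$ vertices, and two threads of length $2$ form a $4$-cycle. So the contradiction must come from a global use of (2) and (3). In the $2$-connected case you use them only to see that $C\neq G$. Your crossing-ear fact $\ell(R)\equiv 0$ (hence $\ell(R)\ge 3$, for ends in the interiors of $P$ and $Q$) would contradict (2) if $R$ were a thread. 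However, nothing in your plan produces a crossing ear, let alone one that is a thread.

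The proposed fallback also fails as stated. Contracting an ear $R$ to an edge changes the length of every $(x,y)$-path through $R$ by $\ell(R)-1$ and leaves all other paths unchanged. So two paths of different residues in the contracted graph need not come from two such paths in $G$ unless $\ell(R)\equiv 1\pmod 3$. Contraction can also create parallel edges or $4$-cycles, and deleting a degree-$2$ vertex can destroy (1) or create adjacent degree-$2$ vertices.

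What is needed are reductions that genuinely exploit minimality in the $2$-connected case. For instance:
\begin{itemize}
\item $xy\notin E(G)$, because $G-xy$ again satisfies (1)--(3).
\item Suppose $\{s,t\}$ is a $2$-cut and a component $D$ of $G-\{s,t\}$ with $|D|\ge 2$ avoids $x$ and $y$. Then $G[D\cup\{s,t\}]$ is a smaller instance, and its two $(s,t)$-paths of different residues can be exchanged inside an $(x,y)$-path through $D$. Such a path exists because $xy$ and an edge at $D$ lie on a common cycle of $G+xy$.
\item A degree-$2$ vertex outside $\{x,y\}$ lies on no triangle.
\item $G-e$ must violate (1) or (2) for every edge $e$.
\end{itemize}
Reductions of this kind bring $G+xy$ down to essentially a simple subdivision of a $3$-connected graph, and a concrete argument in that setting is still required. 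The paper itself does not prove this lemma; it imports it from Bai et al., arXiv:2503.03504. As written, your proposal does not prove the statement.
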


A vertex cut $S$ of $G$ is {\em essential} if $G-S$ has at least two nontrivial components. We call a connected graph {\em essentially $k$-connected} if it contains no essential vertex cut of size at most $k-1$. 
The following lemma follows directly from the proof of Theorem 2 in \cite{B+202503}. 

\begin{lemma}\label{lem:H10:H10-}
Let $G$ be an essentially $3$-connected graph without $(1\bmod 3)$-cycles.
If $G$ contains two disjoint cycles, then $G$ is isomorphic to $H_{10}$ or $H_{10}^-$.
\end{lemma}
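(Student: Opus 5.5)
The statement to prove is Lemma~\ref{lem:H10:H10-}: an essentially $3$-connected graph $G$ with no $(1\bmod 3)$-cycles that contains two disjoint cycles must be $H_{10}$ or $H_{10}^-$. The idea is to repeat the structural argument behind Theorem~\ref{thm:1mod3} in \cite{B+202503}. Use the two disjoint cycles to locate a Petersen-like configuration, then show that essential $3$-connectivity leaves no room for anything else.

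\textbf{Step 1: choose a good pair of cycles and reduce degree-$2$ vertices.} Let $C_1,C_2$ be disjoint cycles with $|C_1|+|C_2|$ minimal. Since $G$ has no $(1\bmod 3)$-cycles, it has no $4$-cycles, so each $C_i$ has length $3$ or at least $5$. Essential $3$-connectivity implies that every vertex of degree $2$ lies in a trivial side of some cut. We first show that $N_2(G)$ is independent, since two adjacent degree-$2$ vertices would give a $2$-cut separating nontrivial parts. We then argue that $\delta(G)\ge 3$ unless $G$ is small. Once this holds, Lemma~\ref{lemma: 2 paths mod 3} applies to suitable subgraphs obtained by deleting a cycle or an edge.

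\textbf{Step 2: connect the two cycles.} Essential $3$-connectivity (or Menger applied after contracting the cycles) gives three disjoint $C_1$--$C_2$ paths $R_1,R_2,R_3$. Their endpoints split $C_1$ into three arcs and $C_2$ into three arcs. Consider the theta-like cycles that use two of the $R_j$, one arc of $C_1$ and one arc of $C_2$. Each choice of arcs yields a cycle, and the length differences between the two arcs of $C_i$ joining a given pair of endpoints are controlled. Forbidding length $1\bmod 3$ for all of these cycles forces the arc lengths modulo $3$, and minimality then forces the exact arc lengths. The intended conclusion is that each $C_i$ is a $5$-cycle, the $R_j$ are single edges, and the arc patterns match those of the Petersen graph. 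Since three attachments alone do not fill out the structure, this step also has to bring in a fourth and fifth connecting path. These come from Lemma~\ref{lemma: 2 paths mod 3}: applied to an arc endpoint pair in $G-E(C_i)$, it produces two paths of different residues. That gives two different residues through one route, and combining with the complementary route forces a $(1\bmod 3)$-cycle unless the exact Petersen matching is present.

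\textbf{Step 3: no extra vertices or edges.} Let $P\subseteq G$ be the spanning-ish Petersen or Petersen-minus-edge structure found in Step 2. Any vertex outside $P$ lies on a $P$-path (an ear) by $2$-connectivity. Using the distance structure of the Petersen graph, in which every pair of vertices is joined by paths of all residues mod $3$ (a checkable property), any ear closes a $(1\bmod 3)$-cycle. The same holds for any added chord not present in the Petersen graph. Hence $V(G)=V(P)$. The edge set lies between $E(H_{10}^-)$ and $E(H_{10})$, so $G\cong H_{10}$ or $H_{10}^-$.

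\textbf{Main obstacle.} The hard step is Step 2: turning "two disjoint cycles" into the full $5+5$ Petersen configuration. It requires a careful residue case analysis over arc lengths and over where further connecting paths attach. The main tool I would use is Lemma~\ref{lemma: 2 paths mod 3}, applied to the pairs of attachment points.
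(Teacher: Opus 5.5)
Your plan matches the paper's only in the sense that the paper also gives no argument of its own here: it defers to Claims~5--15 of the proof of Theorem~2 in Bai et al. Your sketch does not reconstruct that argument, and several claims it relies on are false.

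\textbf{Step~2.} This step carries the entire content of the lemma, and it is announced rather than derived. Three things are asserted without proof: that residues force the arc lengths, that minimality forces two $5$-cycles, and that the $R_j$ are single edges. Minimality of $|C_1|+|C_2|$ says nothing by itself about the lengths of the $R_j$. As you note yourself, three connecting paths do not pin down the structure.

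The extra connections are supposed to come from Lemma~\ref{lemma: 2 paths mod 3} applied in $G-E(C_i)$. However, a vertex of $C_i$ of degree $3$ (resp.\ $2$) in $G$ has degree $1$ (resp.\ $0$) in $G-E(C_i)$. So in exactly the Petersen-like situation you are aiming for, $G-E(C_i)+xy$ is not $2$-connected, and hypothesis~(1) of that lemma fails.

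Even the three disjoint $C_1$--$C_2$ paths need an argument. If $C_1$ is a triangle $abw$ with $d(w)=2$, then $\{a,b\}$ meets every $C_1$--$C_2$ path. Essential $3$-connectivity does not forbid this, because $\{w\}$ is a trivial component of $G-\{a,b\}$.

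\textbf{Step~1.} The reduction to $\delta(G)\ge 3$ cannot be made. $H_{10}^-$ satisfies every hypothesis and has two vertices of degree $2$. So degree-$2$ vertices must be carried through the whole analysis rather than reduced away.

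\textbf{Step~3.} This fails as written, because its ``checkable property'' cannot hold. In any graph without $(1\bmod 3)$-cycles, adjacent vertices are never joined by a path of length $0\bmod 3$ (Fact~\ref{Fa3Extension}), so the property fails for every edge of $P$. Consequently an ear of length $4$ between adjacent vertices closes no $(1\bmod 3)$-cycle; the resulting graph is $L_3(H_{10})\in\mathcal{H}_{13}$. So ``any ear closes a $(1\bmod 3)$-cycle'' is wrong.

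What actually rules out such ears is essential $3$-connectivity, which your Step~3 never invokes. Take a component $K$ of $G-V(P)$.
\begin{itemize}
\item If $K$ has only two attachments $u,v$, then $|K|=1$, since otherwise $\{u,v\}$ is an essential cut. The resulting $2$-ear closes a cycle of length $7$ if $uv\in E(P)$, and of length $4$ otherwise.
\item If $K$ has at least three attachments, two of them are nonadjacent because $P$ is triangle-free. In the Petersen graph, nonadjacent pairs are joined by paths of lengths $2,3,4$, so an ear through $K$ between them closes a $(1\bmod 3)$-cycle.
\end{itemize}
When $P$ is only $H_{10}^-$, these residue facts, and the exclusion of extra chords, must be rechecked in $H_{10}^-$ rather than borrowed from $H_{10}$.

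\textbf{2-connectivity.} You use $2$-connectivity, which essential $3$-connectivity as defined does not give. The Petersen graph plus a pendant vertex meets the literal hypotheses of the lemma but is neither $H_{10}$ nor $H_{10}^-$. State $2$-connectivity as an assumption; this is harmless, since every application in the paper is to a $2$-connected graph.
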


\begin{proof}
See Claim 5 to Claim 15 of the proof of Theorem 2 in \cite{B+202503}. 
\end{proof}

 For $p\geq 3$ and $i\in[0,3]$, let $K^{(i)}_{3,p}$ be a graph obtained from $K_{3,p}$ by adding exactly  $i$ edges between vertices in the partite set of size three. We remark that $K_{3,p}^{(0)}=K_{3,p}$, $K_{3,p}^{(1)}=(K_1\cup K_2)\vee pK_1$, $K_{3,p}^{(2)}=P_3\vee pK_1$, and $K_{3,p}^{(3)}=K_3\vee pK_1$. 
Let $W_p$ be the wheel of order $p+1$, which is the graph obtained from a cycle $C_p$ by adding a new vertex $u$ and $p$ edges from $u$ to all vertices of the cycle $C_p$. That is, $W_p=K_1\vee C_p$.
The following theorem characterizes 3-connected graphs without two disjoint cycles.

\begin{theorem}[Lov\'{a}sz \cite{Lov65}]\label{thm: Lovasz}
    Let $G$ be a $3$-connected graph. 
    If $G$ does not contain two disjoint cycles, 
    then $G$ is isomorphic to $K_5^-,K_5,W_p$ or $K^{(i)}_{3,p}$, where $p\geq 3$ and $i\in[0,3]$.
\end{theorem}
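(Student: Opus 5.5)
The statement is Lovász's classical theorem, cited from \cite{Lov65}; in the paper it is used as a black box. If I had to prove it, I would argue by induction on $n(G)$, using contraction of edges to keep $3$-connectivity.

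\textbf{Base case.} If $n(G)\le 5$, the only $3$-connected graphs are $K_4=W_3$, $K_5^-$, $K_5$ and $W_4$. For these one checks directly that no two disjoint cycles exist and that each lies on the list (here $K_{3,2}$-type graphs are not $3$-connected, so nothing else appears).

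\textbf{Inductive step.} Let $n(G)\ge 6$. By Tutte's wheel theorem, or the simpler fact that every $3$-connected graph other than $K_4$ has an edge $e$ with $G/e$ $3$-connected, choose such an edge $e=uv$. Then $G/e$ still has no two disjoint cycles, since two disjoint cycles in $G/e$ lift to two disjoint cycles in $G$. By induction, $G/e$ is one of $K_5^-$, $K_5$, $W_p$ or $K^{(i)}_{3,p}$. It remains to reconstruct $G$ by splitting the contracted vertex $w$ into $u,v$ with $uv\in E(G)$, $\deg u,\deg v\ge 3$, and $N(u)\cup N(v)\setminus\{u,v\}=N(w)$.

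\textbf{Case analysis on the splittings.} I would handle each family separately.
\begin{itemize}
\item For $W_p$: if $w$ is the hub, any split of the hub neighbourhood into two arcs of the rim with at least two vertices each gives a triangle at $u$ disjoint from a cycle through $v$ and the rest of the rim. Only degenerate splits survive, and these yield $K^{(i)}_{3,p}$-type or wheel graphs. If $w$ is on the rim, the split produces $W_{p+1}$, or else a disjoint triangle pair appears.
\item For $K^{(i)}_{3,p}$: splitting a vertex of the large side gives a vertex adjacent to two of the three centres, and the new structure either has disjoint cycles or is $K^{(i')}_{3,p+1}$. Splitting a centre forces disjoint cycles once $p\ge 3$, unless the split is nearly trivial, in which case one gets back to the family (for example $K_{3,p}^{(i)}$ with one more edge).
\item For $K_5^-$ and $K_5$: these are finite checks, yielding $K_{3,3}^{(i)}$ or wheels.
\end{itemize}

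\textbf{Main obstacle.} The hard step is this exhaustive splitting analysis, especially for $K^{(i)}_{3,p}$ with small $p$. There, a split centre can give graphs like $K_{3,3}^{(i)}$ or $W_5$, which are isomorphic to members of the list only under relabelling, so they must be recognized correctly. My first move for each potential split would be to exhibit two explicit disjoint cycles, one a triangle or $4$-cycle through $u$ and the other using $v$ and the remaining vertices.
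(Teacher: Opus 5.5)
The paper gives no proof of this statement. It is quoted from Lov\'asz and used only as a black box in the proof of Lemma~\ref{lemma: no 2 cycles}, so there is no in-paper argument to compare with. Your route is a standard and workable way to prove it:
\begin{itemize}
\item induct on $n$ using an edge $e$ with $G/e$ $3$-connected (this is Thomassen's contractible-edge lemma for $n\ge 5$; Tutte's wheel theorem alone allows deletion instead of contraction);
\item note that two disjoint cycles in $G/e$ lift to $G$;
\item classify the admissible splittings of the contracted vertex.
\end{itemize}
Your base case and reduction are correct.

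The splitting analysis you sketch for $K^{(i)}_{3,p}$, however, gets the key mechanism backwards. Splitting a large-side vertex $y_1$ never survives. Both $u$ and $v$ need two neighbours in $\{x_1,x_2,x_3\}$, so they share one, say $x_1$. Then $uvx_1$ is disjoint from $x_2y_2x_3y_3$; this uses $p\ge 3$.

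The family actually grows by splitting a centre $x_1$. Write $A,B$ for the neighbourhoods of $u,v$ outside $\{u,v\}$, and $Y$ for the large side.
\begin{itemize}
\item If $|A\cap Y|,|B\cap Y|\ge 2$ and $p\ge 4$, you get two disjoint $4$-cycles $uy_ax_2y_bu$ and $vy_cx_3y_dv$.
\item If $|A\cap Y|,|B\cap Y|\ge 2$ and $p=3$, you get a triangle $uvy$ plus a $4$-cycle on $x_2,x_3$ and the other two vertices of $Y$.
\item If $B\cap Y=\{y_1\}$, then $B$ contains, say, $x_2$, and $vy_1x_2$ is disjoint from $uy_2x_3y_3$.
\end{itemize}
So the only admissible split is $B=\{x_2,x_3\}$ exactly, which requires $x_1x_2,x_1x_3\in E$. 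Then $v$ becomes a new large-side vertex and $G\cong K^{(i')}_{3,p+1}$.

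Your outcome ``$K^{(i)}_{3,p}$ with one more edge'' cannot occur, since uncontraction adds a vertex. There are two smaller corrections of the same kind:
\begin{itemize}
\item $K_5$ has no admissible split at all. The four neighbours of the split vertex can always be divided two and two, giving disjoint triangles through $u$ and through $v$.
\item Hub splits of $W_p$ survive only for $p=4$, and there they give members of the $K^{(i)}_{3,3}$ family, not wheels.
\end{itemize}
Once the case analysis is redone this way, the induction closes.
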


For a graph $H$, we use $H^*$ to denote the graph obtained from $H$ by subdividing its every edge exactly once. 
Let $\hat{W}_4$ denote the graph obtained from $W_4$ by subdividing every edge of some $K_{2,3}$ in $W_4$ exactly once, $K'_{3,3}$ the graph obtained from $K_{3,3}$ by subdividing every edge of some $C_4$ in $K_{3,3}$ exactly once, and $\hat{K}_{3,p}^{(i)}$ the graph obtained from $K_{3,p}^{(i)}$ by subdividing every edge of a $K_{3,p}$ exactly once, as shown in Figure \ref{FiSubdivideW4K33}. Notice that $K_{3,p}^*=\hat{K}_{3,p}^{(0)}$. The following lemma is needed.

\begin{figure}[htbp]
\centering
\begin{tikzpicture}[scale=0.4]

\begin{scope}[xshift=-12cm, yshift=-0.5cm]
\draw[fill=black] (0,0) {coordinate (o1)} circle (0.1);
\foreach \x in {1,2,3} {\draw[fill=black] (\x*120-30:3.4) {coordinate (x\x)} circle (0.1);
\draw[fill=black] (\x*120-30:1.7) circle (0.1);
\draw[fill=black] (\x*120-90:1.7) circle (0.1);
\draw[thick] (o1)--(x\x);}
\draw[thick] (x1)--(x2)--(x3)--(x1);
\node at (0,-3.5) {$K_4^*=W_3^*$};
\end{scope}

\begin{scope}[xshift=-4cm]
\draw[fill=black] (0,0) {coordinate (o1)} circle (0.1);
\foreach \x in {1,2,...,4} {\draw[fill=black] (\x*90:3) {coordinate (x\x)} circle (0.1);
\draw[fill=black] (\x*90-45:2.12) circle (0.1); \draw[thick] (o1)--(x\x);}
\draw[fill=black] (90:1.5) circle (0.1); \draw[fill=black] (270:1.5) circle (0.1);
\draw[thick] (x1)--(x2)--(x3)--(x4)--(x1);
\node at (0,-4) {$\hat{W}_4$};
\end{scope}

\begin{scope}[xshift=4cm]
\draw[fill=black] (-2.6,-2.6) {coordinate (x1)} circle (0.1);
\draw[fill=black] (0,-2.6) {coordinate (x2)} circle (0.1);
\draw[fill=black] (2.6,-2.6) {coordinate (x3)} circle (0.1);
\foreach \x in {1,3,7,9} \draw[fill=black] (\x*0.65-3.25,0) circle (0.1);
\draw[fill=black] (-2.6,2.6) {coordinate (z1)} circle (0.1);
\draw[fill=black] (0,2.6) {coordinate (z2)} circle (0.1);
\draw[fill=black] (2.6,2.6) {coordinate (z3)} circle (0.1);
\draw[thick] (x1)--(z1) (x2)--(z2) (x3)--(z3);
\draw[thick] (x1) edge [bend right=27] (z3);
\draw[thick] (x3) edge [bend left=27] (z1);
\draw[thick] (x1) edge [bend right=20] (z2);
\draw[thick] (x2) edge [bend left=20] (z1);
\draw[thick] (x2) edge [bend right=20] (z3);
\draw[thick] (x3) edge [bend left=20] (z2);
\node at (0,-4) {$K'_{3,3}$};
\end{scope}

\begin{scope}[xshift=12cm]
\draw[fill=black] (-2.6,-2.6) {coordinate (x1)} circle (0.1);
\draw[fill=black] (0,-2.6) {coordinate (x2)} circle (0.1);
\draw[fill=black] (2.6,-2.6) {coordinate (x3)} circle (0.1);
\foreach \x in {1,2,...,9} \draw[fill=black] (\x*0.65-3.25,0) circle (0.1);
\draw[fill=black] (-2.6,2.6) {coordinate (z1)} circle (0.1);
\draw[fill=black] (0,2.6) {coordinate (z2)} circle (0.1);
\draw[fill=black] (2.6,2.6) {coordinate (z3)} circle (0.1);
\draw[thick] (x1)--(z1) (x2)--(z2) (x3)--(z3);
\draw[thick] (x1) edge [bend right=27] (z3);
\draw[thick] (x3) edge [bend left=27] (z1);
\draw[thick] (x1) edge [bend right=20] (z2);
\draw[thick] (x2) edge [bend left=20] (z1);
\draw[thick] (x2) edge [bend right=20] (z3);
\draw[thick] (x3) edge [bend left=20] (z2);
\node at (0,-4) {$K_{3,3}^*=\hat{K}_{3,3}^{(0)}$};
\end{scope}

\end{tikzpicture}
\caption{Graphs $K_4^*$, $\hat{W}_4$, $K'_{3,3}$ and $\hat{K}_{3,3}^{(0)}$.}
\label{FiSubdivideW4K33}
\end{figure}

\begin{lemma}\label{lemma: no 2 cycles}
Let $G$ be an essentially $3$-connected graph without $(1\bmod 3)$-cycles.
If $G$ does not contain two vertex-disjoint cycles, then $G$ is isomorphic to one of the following graphs,
\begin{equation}\label{members of H}
    H_7,\ H_8,\ H_9,\ H_{11},\ K_4^*,\ \hat{W}_4,\ K'_{3,3}, \, \hat{K}_{3,p}^{(i)} \text{ with } p\geq 3 \text{ and } i\in[0,3].
\end{equation}
\end{lemma}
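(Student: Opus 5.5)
We reduce to Lov\'asz's theorem (Theorem~\ref{thm: Lovasz}) by suppressing degree-$2$ vertices. First we record the basic structure of $G$. Since $G$ is essentially $3$-connected and contains a cycle (otherwise it would be a tree, and we tacitly assume $G$ is $2$-connected, as in the intended application), $G$ is $2$-connected. Moreover, every vertex of degree $2$ has both neighbours of degree at least $3$, unless $G$ is a cycle. Indeed, two adjacent degree-$2$ vertices $u,v$ give a $2$-cut $\{u',v'\}$ consisting of the outer neighbours, which is essential as soon as $G$ is not a short cycle. Cycles are excluded separately: the only cycles without $(1\bmod 3)$-cycles have length $0$ or $2 \bmod 3$, and these are not among the target graphs, so they must be ruled out by the intended hypotheses or treated as a degenerate case.

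Hence $N_2(G)$ is independent, and $G$ is a subdivision of a multigraph $M$ in which each edge is subdivided at most once. We take $M$ to be obtained by suppressing all degree-$2$ vertices. Essential $3$-connectivity of $G$ should imply that $M$ is $3$-connected: a $2$-cut of $M$ would lift to a $2$-cut of $G$ with two nontrivial sides. Parallel edges in $M$ can occur only if at most one of them is unsubdivided, and two subdivided parallel edges would form a $4$-cycle, which is $(1\bmod 3)$. So the underlying simple graph $M'$ is $3$-connected, apart from small exceptions that can be handled directly. Since $G$ has no two disjoint cycles, neither does $M$, and so $M'$ is one of $K_5^-$, $K_5$, $W_p$, or $K_{3,p}^{(i)}$ by Theorem~\ref{thm: Lovasz}. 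Any parallel pair in $M$ is itself a cycle, so such a pair must meet every other cycle; this forces very restricted positions, and we check those few cases by hand.

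The main step is to decide, for each Lov\'asz graph, which sets $S$ of edges may be subdivided so that the result has no cycle of length $\equiv 1 \pmod 3$. A cycle of $M'$ with $a$ edges, of which $b$ are subdivided, has length $a+b$, so we need $a+b\not\equiv 1$ for every cycle. We use triangles, $4$-cycles and $5$-cycles as local constraints. In $K_{3,p}$, a $4$-cycle needs $b\in\{1,2\}\pmod 3$, and $6$-cycles also constrain $b$. Taking two $4$-cycles sharing two edges shows that, within each $C_4$, the subdivided edges behave rigidly, which leads to all edges of $K_{3,p}$ being subdivided (the graphs $\hat K^{(i)}_{3,p}$). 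Then the edges inside the $3$-part must stay unsubdivided, since a triangle with a subdivided edge, or paths through it, would produce length $1 \bmod 3$. For small $p$ (namely $p=3$) extra solutions appear, giving $K'_{3,3}$ and $H_9$. Similarly, $W_3=K_4$ yields $K_4^*$, $H_7$ and $H_8$; $W_4$ yields $\hat W_4$; for $W_p$ with $p\ge 5$ we expect a contradiction by examining the rim and the triangles through the hub; $K_5^-$ yields $H_{11}$; and $K_5$ yields nothing.

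We expect this exhaustive subdivision analysis, especially for the infinite families $W_p$ and $K^{(i)}_{3,p}$, to be the main obstacle. The plan is to first prove a local lemma: in any triangle, exactly $0$ or $2$ edges are subdivided, because lengths $4$ and $6$ are forbidden while $3$ and $5$ are allowed. Similarly, in any $4$-cycle exactly $1$ or $2$ edges are subdivided. We then propagate these constraints along the structure, for example consecutive rim-spoke triangles in $W_p$. A final check is needed to confirm that the surviving graphs indeed contain no cycle of length $1 \bmod 3$.
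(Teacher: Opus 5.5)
Your overall route is the paper's: suppress the degree-$2$ vertices, apply Lov\'asz's theorem to the resulting $3$-connected graph, and decide for each of $K_5^-$, $K_5$, $W_p$, $K^{(i)}_{3,p}$ which edges may be subdivided. Your predicted outcome in each case is also correct. You are right that $C_3$ and $C_5$ satisfy the hypotheses literally; the paper only applies the lemma when $e(G)>n(G)$, and it asserts the reduction to a simple $3$-connected graph without comment.

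However, the part of your proposal meant to carry the proof has two concrete defects. The first is that your local lemma is miscomputed. A triangle with $b$ subdivided edges has length $3+b$, and $6\equiv 0\pmod 3$, so $b=3$ is allowed; the correct constraint is only $b\neq 1$. A $4$-cycle has length $4+b$, and $8\equiv 2\pmod 3$, so $b=4$ is allowed; the correct constraint is $b\notin\{0,3\}$. That matches your earlier ``$b\in\{1,2\}\pmod 3$'', not your later ``exactly $1$ or $2$''. As you state the lemma, it would eliminate $K_4^*$ (every triangle fully subdivided), and also $K_{3,3}^*$ and every $\hat K^{(i)}_{3,p}$ (every $4$-cycle of $K_{3,p}$ fully subdivided). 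These are exactly graphs on your list of survivors.

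The second and more serious defect is that constraints from short cycles cannot settle the wheels, whose cycles have unbounded length. Take $W_6$ with hub $u$, and subdivide all six rim edges and the spokes $uv_2,uv_4,uv_6$. Every triangle then has length $5$, every $4$-cycle has length $6$ or $8$, every $5$-cycle has length $9$, and the rim has length $12$. So nothing you propose to check fails, yet $uv_1v_2v_3v_4v_5u$ has length $10$. The same pattern passes these checks for every even $p\ge 6$ with $p\not\equiv 2\pmod 3$. Hence ``examining the rim and the triangles through the hub'' does not dispose of $W_p$ for $p\ge 5$; you need an argument that controls long cycles uniformly.

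The paper's device works as follows. Take a triangle $C$ of $W_p$ with the minimum number $\tau\in\{0,2,3\}$ of subdivided edges; this pins down the subdivisions near $C$. Then exhibit two rim vertices joined by three internally disjoint paths of lengths $4,5,6$ (when $\tau=0$) or $4,6,8$ (when $\tau=2$, $p\ge5$), all avoiding the remaining rim arc. Whatever that arc's length, one of the three resulting cycles has length $1\bmod 3$. The case $\tau=3$ dies on a $5$-cycle of length $10$.

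For $K^{(i)}_{3,p}$ every cycle of $H$ has length at most $6$, so a local check suffices in principle. Still, two $4$-cycles sharing two edges do not give the rigidity you claim. For $p=4$, subdividing exactly the edges from $\{x_1,x_2\}$ to $\{y_1,y_2,y_4\}$ respects every $4$-cycle and is ruled out only by the $6$-cycle $x_1y_1x_2y_2x_3y_4x_1$, which has length $10$. The paper organizes this case in three steps. It requires each $K_{3,3}$ on $X\cup\{y_i,y_j,y_k\}$ to become $H_9$, $K'_{3,3}$ or $K^*_{3,3}$. It then chooses a claw centred in $Y$ with fewest subdivided edges, which leaves only the all-subdivided pattern when $p\ge 4$. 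Finally, the $8$-paths between $X$-vertices force the edges inside $X$ to stay unsubdivided.
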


The proof of Lemma \ref{lemma: no 2 cycles} will be given in Subsection~\ref{sec:lemma no 2 cycles}.

\section{Proofs}\label{sec:proofs}

An {\em inner-vertex} in a block is a vertex that is not a cut-vertex of $G$.
A graph is a {\em block-chain} if it has exactly two end blocks, where an {\em end block} is a block of $G$ containing exactly one cut-vertex of $G$.

\subsection{Proof of Theorem~\ref{thm:main-1mod3}}
\label{sec:proof-1mod3}

By Fact~\ref{FaHnNoEmptyeH},  $ex_2(n,\mathcal{C}_{1\bmod 3})\geq f(n)$ for $n\geq 3$, $n\neq 4$. 
Now let $G$ be a 2-connected $n$-vertex graph without $(1\bmod 3)$-cycles that has maximum number of edges. Then $e(G)\geq f(n)$. We will show that $e(G)=f(n)$ and $G\in\mathcal{H}_n$ by using induction on $n$. 

If $n=3,5$ or $6$, then a graph contains no $(1\bmod 3)$-cycles if and only if it contains no 4-cycles. We have $e(G)\leq ex(n,C_4)=f(n)$. Note that $H_3,H_5,H_6$ are the only $2$-connected extremal graphs for $ex(n,C_4)$ (see \cite{CAJ1989}), and so $G\cong H_3, H_5$ or $H_6$, as desired. 
So assume that $n\geq 7$. Note that we are done if $G\in\mathcal{H}_n$. So assume also that $G\not\in\mathcal{H}_n$ in the following.

\begin{claim}\label{ClAdjacentDegree2}
    $G$ has a $3$-thread.  
\end{claim}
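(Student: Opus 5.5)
We argue by contradiction: assume $G$ has no $3$-thread, i.e.\ no two adjacent vertices of degree $2$. Then $N_2(G)$ is an independent set. Since $G$ has no $(1\bmod 3)$-cycles, it has no $4$-cycles. The aim is to show that $G$ is then forced into a short list of small graphs, each of which either has too few edges or lies in $\mathcal{H}_n$.

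\textbf{Step 1: $G$ is essentially $3$-connected.} Suppose $S=\{u,v\}$ is an essential vertex cut. Then $G-S$ has a nontrivial component $D_1$, and the rest of $G-S$ contains a nontrivial part $D_2$. Let $G_j=G[D_j\cup S]$. We apply Lemma~\ref{lemma: 2 paths mod 3} to $G_1$ with $(x,y)=(u,v)$, checking its hypotheses as follows.
\begin{itemize}
\item $G_1+uv$ is $2$-connected because $G$ is.
\item $G_1$ has at least $4$ vertices; if $|D_1|=2$ a small separate check is needed.
\item Degree-$2$ vertices of $G_1$ other than $u,v$ are degree-$2$ vertices of $G$, so they form an independent set.
\item $G_1$ has no $4$-cycle.
\end{itemize}
The lemma then gives two $(u,v)$-paths in $G_1$ whose lengths differ modulo $3$. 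The same holds in $G_2$.

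Now combine these paths. Pick any $(u,v)$-path $Q$ in $G_2$, and pair it with whichever of the two $G_1$-paths makes the union a cycle of length $\not\equiv 1$. Doing this for both residues available on each side yields at least two distinct residue classes on each side. Their sums then cover at least three residues modulo $3$, or at least two. I expect the sets to force a cycle of length $\equiv 1\pmod 3$.

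Two residues on each side do not automatically cover everything: for example, $\{0,1\}+\{0,1\}=\{0,1,2\}$ does, but the general case needs an explicit check. If two residues per side turn out to be insufficient, I would fall back on rigidity. Knowing that every cross combination avoids residue $1$ pins down the residue sets, for instance $\{a,a+1\}$ on each side with constrained sums. This contradiction step is the main obstacle. The fallback I would try first is to contract $G_2$ to a suitable $(u,v)$-thread and use the induction hypothesis on the smaller graph to bound edges.

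\textbf{Step 2: the structured case.} Once $G$ is essentially $3$-connected, there are two possibilities.
\begin{itemize}
\item If $G$ contains two disjoint cycles, then Lemma~\ref{lem:H10:H10-} gives $G\cong H_{10}$ or $H_{10}^-$.
\item Otherwise Lemma~\ref{lemma: no 2 cycles} places $G$ in the list~\eqref{members of H}.
\end{itemize}

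\textbf{Step 3: compare edge counts.} For each graph on these lists, compute its edge count and compare it with $f(n)$.
\begin{itemize}
\item $H_{10}\in\mathcal{H}_{10}$, contradicting $G\notin\mathcal{H}_n$.
\item $H_{10}^-$ has $14<f(10)$ edges.
\item $H_7,H_8,H_9,H_{11}$ lie in $\mathcal{H}_n$, again a contradiction.
\item $K_4^*$ has $n=10$ and $12$ edges.
\item $\hat W_4$ has $n=11$ and $14<15$ edges.
\item $K'_{3,3}$ has $n=10$ and $13$ edges.
\item $\hat K^{(i)}_{3,p}$ has $n=3p+3$ and $6p+i$ edges. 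This is less than $\lceil 3n/2\rceil-2$ for $p\ge3$.
\end{itemize}
Each graph on the lists is therefore either in $\mathcal{H}_n$ or has fewer than $f(n)$ edges, contradicting the extremality of $G$. Hence $G$ must have a $3$-thread.
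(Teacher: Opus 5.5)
Your case split and tools are the same as the paper's. You apply Lemma~\ref{lemma: 2 paths mod 3} on both sides of an essential $2$-cut, and otherwise use Lemmas~\ref{lem:H10:H10-} and~\ref{lemma: no 2 cycles} with edge counts. However, you leave the decisive step of the cut case unresolved, and the worry you raise there is unfounded.

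Let $A=\{a,a'\}$ and $B=\{b,b'\}$ be $2$-element subsets of $\mathbb{Z}_3$. Then $a+B$ misses exactly one residue $r$. The set $a'+B$ is the translate of $a+B$ by $a'-a\neq 0$, so it misses $r+(a'-a)\neq r$. Hence $A+B=\mathbb{Z}_3$ (this is Cauchy--Davenport for $\mathbb{Z}_3$).

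A $(u,v)$-path in $G_1$ and one in $G_2$ share only $u$ and $v$, so all four unions are cycles. One of them therefore has length $\equiv 1\pmod 3$. This single observation is exactly the paper's finishing line (``three cycles of different lengths modulo 3''). Your rigidity and contraction/induction fallbacks are unnecessary, and your remark that two residues per side need not cover everything is false. Also, $D_1$ being nontrivial already gives $n(G_1)\ge 4$, so no separate check is needed when $|D_1|=2$.

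Step 3 contains an arithmetic error. $\hat K^{(i)}_{3,p}$ has $(3+p)+3p=4p+3$ vertices, not $3p+3$. The inequality $6p+i<\lceil 3n/2\rceil-2$ you state is false either way:
\begin{itemize}
\item with the correct $n$ it reads $6p+i<6p+3$, which fails for $i=3$;
\item with your $n=3p+3$ it fails for every $p\ge 3$.
\end{itemize}
The right comparison is with $f(4p+3)=\lceil 3n/2\rceil-1=6p+4$, valid since $n\ge 15$. This exceeds $6p+i$ for every $i\le 3$. With these two repairs your argument is the paper's proof.
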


\begin{proof}
    Suppose first that $G$ is essentially 3-connected. If $G$ has two vertex-disjoint cycles, then by Lemma \ref{lem:H10:H10-}, $G\cong H_{10}$ or $H_{10}^-$. However, if $G\cong H_{10}^-$, then $e(G)=14<f(n)$, a contradiction. This contradicts our assumption that $G\notin\mathcal{H}_n$. If $G$ has no two vertex-disjoint cycles, then by Lemma \ref{lemma: no 2 cycles}, $G\in\{H_7,H_8,H_9,H_{11},K_4^*,\hat{W}_4,K_{3,3}'\}\cup\{\hat{K}_{3,p}^{(i)}: p\geq 3, i\in[0,3]\}$. One can check that $e(K_4^*)=12<f(10)$, $e(\hat{W}_4)=14<f(11)$, $e(K_{3,3}')=13<f(10)$ and $e(\hat{K}_{3,p}^{(i)})=6p+i<f(4p+3)=6p+4$ for $p\geq 3$. So we have that $G\cong H_7,H_8,H_9$ or $H_{11}$, as desired. 

    Suppose now that $G$ has an essential 2-cut $\{x,y\}$. Let $Q_1,Q_2$ be two nontrivial components of $G-\{x,y\}$, and let $G_i=G[V(Q_i)\cup\{x,y\}]$, $i=1,2$. Then $G_i+xy$ is 2-connected. If $G$ has no $3$-thread, then $N_2(G_i)\backslash\{x,y\}$ is an independent set. By Lemma \ref{lemma: 2 paths mod 3}, $G_i$ has two $(x,y)$-paths of different lengths modulo 3 for $i=1,2$. It follows that $G$ has three cycles of different lengths modulo 3, one of which is a $(1\bmod 3)$-cycle, a contradiction.
\end{proof}

By Claim \ref{ClAdjacentDegree2}, let $xx'y'y$ be a $3$-thread of $G$. Let $G'=G-\{x',y'\}$. Then $n(G')=n-2$ and $e(G')=e(G)-3\geq f(n)-3$. 

\begin{claim}\label{ClG'2connected}
    $G'$ is $2$-connected.
\end{claim}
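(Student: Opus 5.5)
We argue by contradiction: assume $G'$ is not $2$-connected. Since $G$ is $2$-connected and the only vertices of $G$ missing from $G'$ are the internal vertices $x',y'$ of the thread $xx'y'y$, the graph $G'$ is connected. It then has a block-chain structure in which $x$ and $y$ are inner-vertices of the two distinct end blocks. Indeed, any end block of $G'$ avoiding both $x$ and $y$ except at its cut-vertex would yield a cut-vertex of $G$. Let the blocks along the chain be $B_1,\dots,B_m$ with $m\ge 2$, where $x\in B_1$ and $y\in B_m$ are inner-vertices, and let $c_i=B_i\cap B_{i+1}$ be the cut-vertices. Then $G$ is obtained from the chain by adding the path $xx'y'y$ of length $3$.

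The plan is to bound $e(G)$ by an edge count on the blocks and compare it with $f(n)$. We have $n=\sum_i n(B_i)-(m-1)+2$ and $e(G)=\sum_i e(B_i)+3$. Each block $B_i$ is either $K_2$ or a $2$-connected graph with no $(1\bmod 3)$-cycles, so it has at most $f(n(B_i))$ edges; $n(B_i)=4$ is impossible since such a block would contain a $C_4$.

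The key improvement comes from the cycle through the added thread. That cycle has the form $P_1\cup\cdots\cup P_m\cup xx'y'y$, where $P_i$ is a path in $B_i$ between its two attachment vertices. Each $P_i$ can be chosen freely among all such paths in $B_i$, and every choice must avoid total length $\equiv 1 \pmod 3$. If two blocks each offered paths realizing two different residues mod $3$, the combined set of residues would cover everything and give a $(1\bmod 3)$-cycle. Lemma~\ref{lemma: 2 paths mod 3} provides exactly such pairs of paths whenever a block is large, has no $3$-thread and no $C_4$, and $B_i$ plus the attachment edge is $2$-connected. Consequently, all blocks but at most one have a fixed path residue mod $3$.

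For such a block, the pair of attachment vertices is $i$-extendable in the sense of Section~\ref{sec:extremal-graphs}. This lets us merge blocks. More precisely, we replace the whole configuration by a single $2$-connected graph on fewer or the same number of vertices: either take a block plus a suitable extension or edge (Facts~\ref{Fa3Extension} and~\ref{Fa2Extension}), or observe directly that $e(G)\le \sum_i f(n_i)+3$. We then check arithmetically that this is strictly less than $f(n)$, using the near-additivity from Fact~\ref{Fafn} and the values in Table~\ref{table:smalln}. The essential inequality is that cutting the graph into pieces costs at least one edge relative to $\frac32 n$: roughly $f(a)+f(b)+3\le \frac32(a+b)+1<f(a+b+1)$. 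This contradicts $e(G)\ge f(n)$.

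The main obstacle is the small cases, where $f$ is irregular: $n(B_i)\in\{2,3,5,10\}$, where $f(10)=15$ is large, and chains containing $K_2$ blocks. For these we instead identify $G$ directly. For example, $B_1=H_{10}$ attached via $K_2$ blocks would resemble the graphs in Fact~\ref{FaUniqueHngeq13odd}, but there the $3$-threads lie between $y$ and $z$ in a way that keeps $G'$ $2$-connected. Here I would first try a careful table-based case analysis on $m$ and the block sizes, showing in each case that either a $(1\bmod 3)$-cycle appears or $e(G)<f(n)$.
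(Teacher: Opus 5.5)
Your proposal has two genuine gaps.

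\textbf{Gap 1.} Your plan is to show that in every configuration either a $(1\bmod 3)$-cycle appears or $e(G)<f(n)$. That cannot succeed, because the claim is false without the standing assumption $G\notin\mathcal{H}_n$, which you never use. For $n=13$ let $G=L_3(H_{10})$: the Petersen graph plus a path $u\,a\,b\,c\,v$ between adjacent vertices $u,v$. Then $G$ is $2$-connected, has no $(1\bmod 3)$-cycles, and $e(G)=19=f(13)$. Yet for the $3$-thread $u\,a\,b\,c$, the graph $G-\{a,b\}$ is the Petersen graph with a pendant edge $vc$. Your remark that the $3$-threads in the graphs of Fact~\ref{FaUniqueHngeq13odd} keep $G'$ $2$-connected already fails here; this is the case $k=1$ there. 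The same happens for the $3$-extensions in $\mathcal{H}_{12},\mathcal{H}_{14},\mathcal{H}_{16},\ldots$. So when $x$ and $y$ have a common neighbor $w$, the argument must \emph{identify} $G$ as a member of $\mathcal{H}_n$, and only then contradict $G\notin\mathcal{H}_n$. The paper does this with three ingredients: the induction hypothesis on $n-3$ vertices; the equality case $f(n)=f(n-3)+4$ of Fact~\ref{Fafn}, which holds only for $n=13$ or even $n\ge 12$; and, for even $n\ge 16$, the uniqueness in Fact~\ref{FaUniqueHngeq13odd} together with an analysis of the clique $2$-cut $\{z,w\}$.

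\textbf{Gap 2.} Your ``essential inequality'' is false. For even $a,b\ge 16$ one has $f(a)+f(b)+3=\tfrac32(a+b)+1=f(a+b+1)$. For odd $a,b\ge 15$ one has $f(a)+f(b)+3=f(a+b+1)+1$. So summing block bounds never yields $e(G)<f(n)$. The residue discussion does not repair this: Lemma~\ref{lemma: 2 paths mod 3} does not apply to blocks containing $3$-threads, and the ``merging'' step is never specified.

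The missing idea is to glue the two end blocks by identifying $x$ and $y$ into one vertex $z$.
\begin{itemize}
\item If $x,y$ have no common neighbor, the resulting graph $G''$ is $2$-connected on $n-3$ vertices with $e(G)-3\ge f(n)-3>f(n-3)$ edges, so it has a $(1\bmod 3)$-cycle. That cycle must pass through $z$, so it lifts to an $(x,y)$-path in $G'$ of length $1\bmod 3$. Together with $xx'y'y$ this gives a $(1\bmod 3)$-cycle in $G$.
\item If $x,y$ have a common neighbor $w$, it is unique since $G$ has no $4$-cycle, and $G'$ has exactly two blocks meeting at $w$. Either delete a $K_2$-block, or identify $x,y$ together with the edges $xw,yw$. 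In both cases you get a $2$-connected graph on $n-3$ vertices with no $(1\bmod 3)$-cycle and at least $f(n)-4\ge f(n-3)$ edges. This forces equality and extremality, which is exactly where Gap~1 has to be handled.
\end{itemize}
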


\begin{proof}
    Suppose that $G'$ is not 2-connected. Since $G$ is 2-connected, we see that $G'$ is a block-chain such that $x,y$ are inner-vertices of different end-blocks of $G'$. Let $B_1$, $B_2$ be the two end-blocks of $G'$ such that $x,y$ are the inner-vertices of $B_1, B_2$, respectively. 
    
    We claim that $x,y$ have exactly one common neighbor in $G$ (and then in $G'$). If $x,y$ have two common neighbors in $G$, then $G$ has a $4$-cycle, a contradiction. Suppose that $x,y$ have no common neighbors. Let $G''$ be the graph obtained from $G'$ by identifying $x,y$ and denote the new vertex by $z$. Then $G''$ is 2-connected, $n(G'')=n-3$ and $e(G'')=e(G)-3\geq f(n)-3>f(n-3)$ (by Fact \ref{Fafn}). It follows that $G''$ contains a $(1\bmod 3)$-cycle $C$. Recall that $G'$ contains no $(1\bmod 3)$-cycles. The cycle $C$ must pass through $z$ and the predecessor and successor of $z$ are neighbors of $x$ and $y$, respectively, in $G'$. It follows that $G'$ has an $(x,y)$-path of length $1\bmod 3$. Together with $xx'y'y$, we find a $(1\bmod 3)$-cycle of $G$, a contradiction. Thus we conclude that $x,y$ have exactly one common neighbor, say $w$, in $G'$. It follows that $w$ is a cut-vertex of $G'$ and $G'$ has exactly two blocks $B_1,B_2$.

    If one block of $G'$ has only two vertices, say $V(B_2)=\{y,w\}$, then $G''=G'-\{y\}$ is 2-connected without $(1\bmod 3)$-cycles. Note that $n(G'')=n-3$ and $e(G'')=e(G)-4\geq f(n)-4\geq f(n-3)$. By the induction hypothesis, $e(G'')=f(n-3)$, $G''\in\mathcal{H}_{n-3}$ and $G$ is a 3-extension of $G''$. By Fact \ref{Fafn}, we see that $n=13$ or $n\geq 12$ is even, and $G\in\mathcal{H}_n$.

    Now we suppose that both $B_1,B_2$ have at least three vertices. Let $G''$ be the graph obtained from $G'$ by identifying $x,y$ (and identifying the two edges $xw,yw$). Let $z$ be the vertex of $G''$ obtained from identifying $x,y$. Then $G''$ is 2-connected, $n(G'')=n-3$ and $e(G'')=e(G)-4\geq f(n)-4\geq f(n-3)$. If $G''$ contains a $(1\bmod 3)$-cycle, then the cycle passes through $z$ and $G'$ contains an $(x,y)$-path of length $1\bmod 3$. It follows that $G$ contains a $(1\bmod 3)$-cycle, a contradiction. Thus we have that $G''$ contains no $(1\bmod 3)$-cycles. It follows that $G''\in\mathcal{H}_{n-3}$ and $n=13$ or $n\geq 12$ is even by Fact \ref{Fafn}.
    
    Note that $\{z,w\}$ is a clique-$2$-cut of $G''$ (that is, a 2-cut which is also a clique of $G''$). If $n=12$, then $G''\cong H_9$; if $n=13$, then $G''\cong H_{10}$; if $n=14$, then $G''\cong H_{11}$. For all three cases, $G''$ has no clique-$2$-cuts, a contradiction. So assume that $n\geq 16$ is even, and thus $n(G'')\geq 13$ is odd. By Fact \ref{FaUniqueHngeq13odd}, $G''$ is the unique graph in $\mathcal{H}_{n-3}$ (see Figure \ref{FiGraphHnoddgeq13}). Note that $\{z,w\}$ is the only clique-$2$-cut of $G''$. It follows that (up to symmetry) $B_1$ is a Petersen graph and $B_2$ is the graph consisting of one $2$-thread and $k$ $3$-threads between two fixed vertices, where $k=\frac{1}{2}(n-14)$. If $n=16$, then $B_2$ is a 5-cycle. Let $G_1=G-(V(B_2)\backslash\{y,w\})$. Then $G_1\in\mathcal{H}_{13}$ and $G$ is a 3-extension of $G_1$. For $n\geq 18$, let $x_1,y_1$ be two internal vertices of a $3$-thread in $B_2$. Then $G_1=G-\{x_1,y_1\}$ is a 2-connected graph without $(1\bmod 3)$-cycles and $e(G_1)=e(G)-3$. It follows that $G_1\in\mathcal{H}_{n-2}$ and $G$ is a 2-extension of $G_1$. In both cases, we have $G\in\mathcal{H}_n$, as desired.    
\end{proof}

By Fact \ref{Fafn}, $e(G')=e(G)-3\geq f(n)-3\geq f(n-2)$ if $n\neq 12$. By Claim \ref{ClG'2connected}, $e(G')\leq f(n-2)$. It follows that $n\notin\{7,10,12,13,16\}$, $G'\in\mathcal{H}_{n-2}$ and $G$ is a $2$-extension of $G'$. That is, $G\in\mathcal{H}_n$. Now we consider the exceptional case $n=12$. 

\begin{claim}\label{ClPropertyH'10}
    Let $H$ be a $2$-connected graph of order $10$ without $(1\bmod 3)$-cycles. If $e(H)\geq 13$ and $H$ has a $2$-extendable pair, then $e(H)=13$ and $H\in\mathcal{H}'_{10}$.
\end{claim}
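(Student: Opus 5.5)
The plan is to use the 2-extendable pair to build a larger graph and then invoke the induction hypothesis at order $12$. This runs parallel to the main argument.

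Let $\{x,y\}$ be a $2$-extendable pair of $H$, and let $L$ be the $2$-extension of $H$ at $\{x,y\}$. By Fact~\ref{FaExtensionedge}, $L$ has no $(1\bmod 3)$-cycles, $n(L)=12$, and $e(L)=e(H)+3\ge 16=f(12)$. Since $H$ is $2$-connected, adding an $(x,y)$-path of length $3$ keeps $L$ $2$-connected. The induction hypothesis applies, because we are inside the proof of Theorem~\ref{thm:main-1mod3} for $n=12$ and may assume the theorem for smaller orders. Here we need the bound for order $12$ itself, so I would arrange the claim to be invoked while proving the case $n=12$ with $G$ maximal. Concretely, $e(L)\le e(G)$, and we aim to show $e(G)=16$ with $G\in\mathcal{H}_{12}$.

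The cleaner route is to argue directly rather than through $L$. Suppose $e(H)\ge 14$. Then $e(H)\ge 14$ already contradicts nothing, since $f(10)=15$, so the bound must come from the structure forced by the $2$-extendable pair. I would split on whether $H$ has a $3$-thread.

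\textbf{Case 1: $H$ has a $3$-thread $uu'v'v$.} Set $H_0=H-\{u',v'\}$, which has order $8$ and $e(H_0)=e(H)-3$.
\begin{itemize}
\item If $H_0$ is $2$-connected, induction gives $e(H_0)\le f(8)=10$, so $e(H)\le 13$. Equality forces $H_0\in\mathcal{H}_8$, and then $H\in\mathcal{L}_2(\mathcal{H}_8)\subseteq\mathcal{H}'_{10}$.
\item If $H_0$ is not $2$-connected, I would repeat the block-chain argument from Claim~\ref{ClG'2connected}. Identifying $u$ and $v$ (and merging edges through a common neighbour if one exists) gives a $2$-connected graph of order $7$ with at least $e(H)-4$ edges. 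This yields $e(H)\le f(7)+4=13$, with equality only when $H$ is a $3$-extension of $H_7$, i.e.\ $H\in\mathcal{L}_3(H_7)$.
\end{itemize}

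\textbf{Case 2: $H$ has no $3$-thread.} Run the argument of Claim~\ref{ClAdjacentDegree2}. If $H$ has an essential $2$-cut, Lemma~\ref{lemma: 2 paths mod 3} produces a $(1\bmod 3)$-cycle, a contradiction. So $H$ is essentially $3$-connected, and Lemmas~\ref{lem:H10:H10-} and~\ref{lemma: no 2 cycles} show that $H$ is one of $H_{10}$, $H_{10}^-$, $K_4^*$, or $K'_{3,3}$ (these are the order-$10$ members). Now use the $2$-extendable pair.
\begin{itemize}
\item By Fact~\ref{Fa2Extension2}, $H_{10}$ and $H_{10}^-$ have no $3$-thread, hence no $2$-extendable pair. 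This part is a finite check.
\item $K_4^*$ has $12$ edges and $K'_{3,3}$ has $13$ edges. For $K'_{3,3}$ I would check directly that no pair is $2$-extendable. Its $2$-paths run between the degree-$2$ vertices, and every pair admits a $(1\bmod 3)$-path, which is a short case check using the symmetry of $K_{3,3}$.
\end{itemize}

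The main obstacle is Case~1 when $H_0$ is not $2$-connected. Identifying $u$ and $v$ must preserve the absence of $(1\bmod 3)$-cycles, and the order-$7$ extremal structure (unique $H_7$) must pin down $H$. The final verification that equality cases lie exactly in $\mathcal{H}'_{10}$ reduces to Figure~\ref{FiClassH'10} together with Fact~\ref{Fa2Extension2}.
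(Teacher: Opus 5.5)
Your argument is correct and is essentially the paper's proof, with the two cases presented in the opposite order. The paper first shows that $H$ has a $3$-thread: if there is none, an essential $2$-cut contradicts the two-paths lemma, and in the essentially $3$-connected case the only order-$10$ candidates $H_{10}$, $H_{10}^-$, $K_4^*$ and $K'_{3,3}$ fail either the edge count or the existence of a $2$-extendable pair. It then deletes the interior of the thread and uses induction at order $8$ if the remainder is $2$-connected, and the block-chain identification down to order $7$ otherwise.

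Discard your opening detour through the $2$-extension $L$: it would invoke the order-$12$ case that is being proved, so it is circular. Also, your claim that equality holds only for a $3$-extension of $H_7$ depends on excluding the subcase where both blocks have at least three vertices. As in the paper, this works because the identified graph would then have a clique $2$-cut, and $H_7$ has none.
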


\begin{proof}
    We first prove the following subclaims.

    \begin{subclaim}\label{ClH3Thread}
        $H$ has a $3$-thread.
    \end{subclaim}

    \begin{proof}
        Suppose first that $H$ is essentially $3$-connected. If $H$ has two vertex-disjoint cycles, then by Lemma \ref{lem:H10:H10-}, $H\cong H_{10}$ or $H_{10}^-$. However, both $H_{10}$ and $H_{10}^-$ have no $2$-extendable pairs (see Fact \ref{Fa2Extension2}), a contradiction. If $H$ has no two vertex-disjoint cycles, then by Lemma \ref{lemma: no 2 cycles}, $H\in\{H_7,H_8,H_9,H_{11},K_4^*,\hat{W}_4,K_{3,3}'\}\cup\{\hat{K}_{3,p}^{(i)}: p\geq 3, i\in[0,3]\}$. 
        Since $n(H)=10$ and $e(H)\ge 13$, $H \cong K_{3,3}'$.
        However, $K_{3,3}'$ has no $2$-extendable pairs, a contradiction.
        
        Suppose now that $H$ has an essential $2$-cut $\{x,y\}$. Let $Q_1,Q_2$ be two nontrivial components of $H-\{x,y\}$, and let $H_i=H[V(Q_i)\cup\{x,y\}]$, $i=1,2$. Then $H_i+xy$ is $2$-connected. If $H$ has no $3$-thread, then $N_2(H_i)\backslash\{x,y\}$ is an independent set. By Lemma \ref{lemma: 2 paths mod 3}, $H_i$ has two $(x,y)$-paths of different lengths modulo $3$ for $i=1,2$. It follows that $H$ has a $(1\bmod 3)$-cycle, a contradiction. Thus we conclude that $H$ has a $3$-thread.
    \end{proof}

    By Subclaim \ref{ClH3Thread}, let $xx'y'y$ be a $3$-thread of $H$ and $H'=H-\{x',y'\}$. Then $n(H')=8$ and $e(H')=e(H)-3\geq 10$.

    \begin{subclaim}\label{ClH'2connected}
        $H'$ is $2$-connected.
    \end{subclaim}

    \begin{proof}
        Suppose that $H'$ is not 2-connected. Since $H$ is 2-connected, we see that $H'$ is a block-chain such that $x,y$ are inner-vertices of different end-blocks of $H'$. Let $B_1$, $B_2$ be the two blocks of $H'$ such that $x,y$ are the inner-vertices of $B_1, B_2$, respectively. 
    
        We claim that $x,y$ have exactly one common neighbor in $H$ (and then in $H'$). First $x,y$ have no two common neighbors since $H$ contains no 4-cycles. Suppose that $x,y$ have no common neighbors. Let $H''$ be the graph obtained from $H'$ by identifying $x,y$. Then $H''$ is 2-connected, $n(H'')=7$ and $e(H'')=e(H)-3\geq 10>f(7)$. It follows that $H''$ contains a $(1\bmod 3)$-cycle, and thus $H'$ has an $(x,y)$-path of length $1\bmod 3$. Together with $xx'y'y$, we find a $(1\bmod 3)$-cycle of $H$, a contradiction. Thus we conclude that $x,y$ have exactly one common neighbor, say $w$, in $H'$. It follows that $w$ is a cut-vertex of $H'$ and $H'$ has exactly two blocks $B_1,B_2$.

        If one block of $H'$ has only two vertices, say $V(B_2)=\{y,w\}$, then $H''=H'-\{y\}$ is 2-connected without $(1\bmod 3)$-cycles. Note that $n(H'')=7$ and $e(H'')=e(H)-4\geq 9$. By the induction hypothesis, $H''=H_7$ and $H$ is a 3-extension of $H''$. That is, $H\in\mathcal{H}'_{10}$.

        Now we suppose that both $B_1,B_2$ have at least three vertices. Let $H''$ be the graph obtained from $H'$ by identifying $x,y$ (and identifying the two edges $xw,yw$). Let $z$ be the vertex of $H''$ obtained from identifying $x,y$. Then $H''$ is 2-connected, $n(H'')=7$ and $e(H'')=e(H)-4\geq 9$. If $H''$ contains a $(1\bmod 3)$-cycle, then so does $H$, a contradiction. Thus we have that $H''$ contains no $(1\bmod 3)$-cycles. It follows that $H''\cong H_7$. Note that $\{z,w\}$ is a clique-$2$-cut of $H''$. However, $H_7$ has no clique-$2$-cuts, a contradiction.    
    \end{proof}

    By Subclaim~\ref{ClH'2connected}, $H'$ is $2$-connected and without $(1\bmod 3)$-cycles. Note that $n(H')=8$ and $e(H')\geq 10$. By induction hypothesis, $H'\in\mathcal{H}_8$ and $H$ is a $2$-extension of $H'$. That is, $H\in\mathcal{H}'_{10}$.    
\end{proof}

For $n=12$, we have that $n(G')=10$, $e(G')\geq 13$. By Claim \ref{ClG'2connected}, $G'$ is 2-connected without $(1\bmod 3)$-cycles. By Claim~\ref{ClPropertyH'10}, $G'\in\mathcal{H}'_{10}$, and $G$ is a 2-extension of $G'$. That is, $G\in\mathcal{H}_{12}$.
This completes the proof.

\subsection{Proof of Lemma \ref{lemma: no 2 cycles}}
\label{sec:lemma no 2 cycles}

A \textit{simple} subdivision of a graph $H$ is obtained from $H$ by subdividing each edge of $H$ at most once. Suppose $G$ is a simple subdivision of $H$. In what follows, when no confusion arises, we simply say that an edge is subdivided to mean that an edge of $H$ is subdivided in $G$. For a subgraph $H'$ of $H$, we use $\tau(H')$ to denote the number of edges in $H'$ that are subdivided. If $G'$ is the subgraph of $G$ obtained by subdividing each edge of $H'$ at most once, then we say that $H'$ \textit{converts} to $G'$. 

We notice that if $C$ is a triangle in $H$ and $G$ is a simple subdivision of $H$ without $(1\bmod 3)$-cycles, then $\tau(C)\neq 1$; and if $C$ is a 4-cycle, then $\tau(C)\neq 0$ or 3. We first prove the following fact.

\begin{fact}\label{FaSimpleK4K33}
    $(1)$ Every simple subdivision of $K_4$, except $H_7$, $H_8$ and $K_4^*$, contains a $(1\bmod 3)$-cycle.   
    $(2)$ Every simple subdivision of $K_{3,3}$, except $H_9$, $K'_{3,3}$ and $K_{3,3}^*$, contains a $(1\bmod 3)$-cycle.
\end{fact}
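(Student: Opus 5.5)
Both parts reduce to a finite case analysis organized by the two local constraints just noted: every triangle $C$ of $H$ has $\tau(C)\neq 1$, and every 4-cycle $C$ has $\tau(C)\notin\{0,3\}$. I would also use the general rule that a cycle of $H$ of length $m$ converts to a cycle of length $m+\tau(C)$, so it must avoid $m+\tau(C)\equiv 1 \pmod 3$. I would first derive all subdivision patterns compatible with these constraints. Then I would check, for each surviving pattern, that the remaining cycles (Hamiltonian 4-cycles in $K_4$; 4- and 6-cycles in $K_{3,3}$) also satisfy the condition. Finally, I would identify the survivors with the named graphs up to isomorphism.

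\textbf{Part (1).} $K_4$ has four triangles, each with $\tau\in\{0,2,3\}$, and three 4-cycles, each with $\tau\in\{1,2,4\}$. Let $S$ be the set of subdivided edges. Every edge lies in exactly two triangles, so the triangle values sum to $2|S|$. I would split by $|S|$.
\begin{itemize}
\item $|S|=0$ gives a 4-cycle with $\tau=0$, which is forbidden.
\item $|S|=1,2$: with $|S|=1$ some triangle has $\tau=1$. With $|S|=2$, both edges must lie in a common triangle (else some triangle has $\tau=1$), and then another triangle containing exactly one of them has $\tau=1$.
\item $|S|=3$: the triangle condition forces $S$ to be either a triangle or a star $K_{1,3}$. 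If $S$ is a triangle, the other triangles each contain exactly one edge of $S$, which is forbidden. If $S$ is a star, every triangle through the center has $\tau=2$ and the opposite triangle has $\tau=0$. Each 4-cycle has $\tau=2$, giving length $6$, so this is $H_7$.
\item $|S|=4$: the complement consists of two edges. If they are disjoint, $S$ is a $C_4$, every triangle has $\tau=2$, and the 4-cycles have $\tau\in\{2,4\}$; this is $H_8$. If they are adjacent, the triangle they span has $\tau=1$.
\item $|S|=5$: the two triangles through the unsubdivided edge have $\tau=2$ and the others $\tau=3$. But a 4-cycle avoiding that edge has $\tau=4$ and one through it has $\tau=3$, which is forbidden.
\item $|S|=6$ gives $K_4^*$, with all cycles of length $6$ or $8$.
\end{itemize}

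\textbf{Part (2).} $K_{3,3}$ has no triangles, so the constraints are: every 4-cycle has $\tau\in\{1,2,4\}$, and every 6-cycle (length $6+\tau$) has $\tau\not\equiv 1\pmod 3$, that is, $\tau\in\{0,2,3,5,6\}$. I would encode $S$ as a $3\times 3$ 0/1 matrix $M$ with rows and columns indexed by the two parts. A 4-cycle corresponds to a $2\times 2$ submatrix, so every $2\times 2$ submatrix has sum in $\{1,2,4\}$. A 6-cycle corresponds to the complement of a permutation matrix, so its $\tau$ equals $|S|-\sum_i M_{i\pi(i)}$.
\begin{itemize}
\item Sum 4 in a $2\times 2$ submatrix forces every other submatrix sharing two of its entries to avoid sum 3. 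Sum 0 and sum 3 are both excluded.
\item I would first show $|S|\ge 3$ and rule out patterns in which a row contains both a $0$ and a $1$ inconsistently. Then I would enumerate by $|S|$ up to row and column permutations.
\item $|S|=3$ as a perfect matching gives $H_9$. Every $2\times 2$ submatrix has sum 1 or 2, and every permutation diagonal meets $S$ in $0$, $1$ or $3$ entries, so the 6-cycles have $\tau\in\{3,2,0\}$, all allowed.
\item Four subdivided entries forming a $2\times 2$ block, i.e.\ a $C_4$, give $K'_{3,3}$.
\item All nine entries give $K_{3,3}^*$.
\item Every other pattern yields either a forbidden $2\times 2$ sum or a permutation $\pi$ with $|S|-\sum_i M_{i\pi(i)}\equiv 1\pmod 3$.
\end{itemize}

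\textbf{Main obstacle.} The hardest step is keeping this second enumeration clean. I would reduce it with the $2\times 2$ constraint first: no $2\times 2$ submatrix may have sum $0$ or $3$. This severely limits how zeros and ones can be arranged, so the 6-cycle check only needs to be run on a handful of residual patterns.
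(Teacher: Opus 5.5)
Your Part (1) is correct and complete. It differs from the paper only in bookkeeping: you split on $|S|$, while the paper picks a triangle of minimum $\tau$. It also confirms that $H_7$, $H_8$ and $K_4^*$ really are free of $(1\bmod 3)$-cycles.

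The gap is in Part (2). Your encoding is right: $4$-cycles are $2\times 2$ submatrices needing $\tau\in\{1,2,4\}$, and $6$-cycles are complements of permutation matrices needing $\tau\notin\{1,4\}$. The classification you state is also true. But the sentence ``every other pattern yields either a forbidden $2\times 2$ sum or a permutation $\pi$ with \ldots'' is the entire content of (2), and it is asserted, not proved. For instance, you never show that a $3$-element $S$ must be a perfect matching. Your preparatory steps do not supply this. ``Rule out patterns in which a row contains both a $0$ and a $1$ inconsistently'' is not a well-defined step, since the permutation matrix and the $2\times 2$ block both have mixed rows. The bullet about sum $4$ merely restates the constraint. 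Moreover, the $2\times 2$ test alone does \emph{not} finish the job, so you need to know exactly which patterns survive it.

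A short way to close this is to let $R_i\subseteq\{1,2,3\}$ be the support of row $i$ and apply the $2\times 2$ condition to pairs of rows. One checks the following compatibilities:
\begin{itemize}
\item $\{1,2,3\}$ is compatible only with $\emptyset$ and itself;
\item $\{1,2,3\}\setminus\{s\}$ is compatible only with $\emptyset$, $\{s\}$ and itself;
\item $\{p\}$ is compatible only with $\{q\}$ for $q\neq p$ and with $\{1,2,3\}\setminus\{p\}$;
\item $\emptyset$ is compatible only with sets of size at least $2$.
\end{itemize}
Choosing three pairwise compatible rows then leaves, up to row and column permutations, exactly six patterns:
\begin{itemize}
\item the permutation matrix ($H_9$);
\item the $2\times 2$ block ($K'_{3,3}$);
\item the all-ones matrix ($K_{3,3}^*$);
\item the $2\times 2$ block plus the entry in the remaining row and column ($|S|=5$);
\item all ones except for one zero row or one zero column ($|S|=6$).
\end{itemize}
In the $|S|=5$ pattern, a permutation avoiding the isolated entry meets $S$ once. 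In the $|S|=6$ patterns, every permutation meets $S$ twice. Either way some $6$-cycle has $\tau=4$, so $G$ has a $10$-cycle.

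With this inserted, your matrix route is a valid alternative to the paper's argument. The paper instead fixes a claw of minimum $\tau$ and works through $4$-, $7$- and $10$-cycles plus one three-paths argument.
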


\begin{proof}
    (1) Let $H\cong K_4$ and $G$ be a simple subdivision of $H$ without $(1\bmod 3)$-cycles. Set $V(H)=\{v_1,v_2,v_3,v_4\}$. Let $C$ be a triangle of $H$ with $\tau:=\tau(C)$ as small as possible, say $C=v_1v_2v_3v_1$. Note that $\tau\neq 1$ since $G$ contains no $(1\bmod 3)$-cycles.

    Suppose first that $\tau=0$. If $\tau(v_1v_4v_2)=0$, then $v_1v_4v_2v_3v_1$ converts to a 4-cycle; if $\tau(v_1v_4v_2)=1$, then $v_1v_4v_2v_1$ converts to a 4-cycle. Both case we have a contradiction. It follows that $\tau(v_1v_4v_2)=2$, and both two edges $v_1v_4,v_2v_4$ are subdivided. By a similar analysis, we see that $v_3v_4$ is subdivided as well. Thus $G\cong H_7$, as desired.

    Suppose second that $\tau=2$, say $v_1v_2$, $v_2v_3$ are subdivided. By the choice of $C$, we have that $v_1v_4$ and $v_3v_4$ are subdivided. If $v_2v_4$ is subdivided, then $v_1v_4v_2v_3v_1$ converts to a 7-cycle, a contradiction. Thus $v_2v_4$ is not subdivided and $G\cong H_8$, as desired.

    Suppose third that $\tau=3$. By the choice of $C$, each edge of $H$ is subdivided and $G\cong K_4^*$, as desired.

    (2) Let $H\cong K_{3,3}$ and $G$ be a simple subdivision of $H$ without $(1\bmod 3)$-cycles. Let $X=\{x_1,x_2,x_3\}$ and $Y=\{y_1,y_2,y_3\}$ be the partition sets of $H$. Let $K$ be a claw ($K_{1,3}$) of $H$ with $\tau:=\tau(K)$ as small as possible, say $x_1$ is the center of $K$ (see Figure \ref{FiFaK33}).

\begin{figure}[htbp]
\centering
\begin{tikzpicture}[scale=0.4]

\begin{scope}[xshift=-15cm]
\draw[fill=black] (-2,2.5) {coordinate (x2)} circle (0.1);
\draw[fill=black] (2,2.5) {coordinate (x3)} circle (0.1);
\draw[fill=black] (-3,-1.5) {coordinate (y1)} circle (0.1);
\draw[fill=black] (0,-1.5) {coordinate (y2)} circle (0.1);
\draw[fill=black] (3,-1.5) {coordinate (y3)} circle (0.1);
\draw[fill=black] (0,-3.5) {coordinate (x1)} circle (0.1);
\draw[thick] (x2)--(y1) (x3)--(y3);
\draw[thick] (x1)--(y1) (x1)--(y2) (x1)--(y3);
\node[below] at (x1) {$x_1$}; \node[above] at (x2) {$x_2$}; \node[above] at (x3) {$x_3$};
\node[left] at (y1) {$y_1$}; \node[above] at (y2) {$y_2$}; \node[right] at (y3) {$y_3$};
\draw[thick] (x2) edge [bend right=19] (y2);
\draw[thick] (x3) edge [bend left=19] (y2);
\draw[thick] (x2) edge [bend right=21] (y3);
\draw[thick] (x3) edge [bend left=21] (y1);
\foreach \x in {1,3,4,6} \draw[fill=black] (\x-3.5,0.5) circle (0.1);
\node at (0,-5) {$\tau=0$};
\end{scope}

\begin{scope}[xshift=-5cm]
\draw[fill=black] (-2,2.5) {coordinate (x2)} circle (0.1);
\draw[fill=black] (2,2.5) {coordinate (x3)} circle (0.1);
\draw[fill=black] (-3,-1.5) {coordinate (y1)} circle (0.1);
\draw[fill=black] (0,-1.5) {coordinate (y2)} circle (0.1);
\draw[fill=black] (3,-1.5) {coordinate (y3)} circle (0.1);
\draw[fill=black] (0,-3.5) {coordinate (x1)} circle (0.1);
\draw[thick] (x2)--(y1) (x3)--(y3);
\draw[thick] (x1)--(y1) (x1)--(y2) (x1)--(y3);
\node[below] at (x1) {$x_1$}; \node[above] at (x2) {$x_2$}; \node[above] at (x3) {$x_3$};
\node[left] at (y1) {$y_1$}; \node[above] at (y2) {$y_2$}; \node[right] at (y3) {$y_3$};
\draw[thick] (x2) edge [bend right=19] (y2);
\draw[thick] (x3) edge [bend left=19] (y2);
\draw[thick] (x2) edge [bend right=21] (y3);
\draw[thick] (x3) edge [bend left=21] (y1);
\draw[fill=black] (0,-2.5) circle (0.1);
\foreach \x in {1,6} \draw[fill=black] (\x-3.5,0.5) circle (0.1);
\node at (0,-5) {$\tau=1$};
\end{scope}

\begin{scope}[xshift=5cm]
\draw[fill=black] (-2,2.5) {coordinate (x2)} circle (0.1);
\draw[fill=black] (2,2.5) {coordinate (x3)} circle (0.1);
\draw[fill=black] (-3,-1.5) {coordinate (y1)} circle (0.1);
\draw[fill=black] (0,-1.5) {coordinate (y2)} circle (0.1);
\draw[fill=black] (3,-1.5) {coordinate (y3)} circle (0.1);
\draw[fill=black] (0,-3.5) {coordinate (x1)} circle (0.1);
\draw[thick] (x2)--(y1) (x3)--(y3);
\draw[thick] (x1)--(y1) (x1)--(y2) (x1)--(y3);
\node[below] at (x1) {$x_1$}; \node[above] at (x2) {$x_2$}; \node[above] at (x3) {$x_3$};
\node[left] at (y1) {$y_1$}; \node[above] at (y2) {$y_2$}; \node[right] at (y3) {$y_3$};
\draw[thick] (x2) edge [bend right=19] (y2);
\draw[thick] (x3) edge [bend left=19] (y2);
\draw[thick] (x2) edge [bend right=21] (y3);
\draw[thick] (x3) edge [bend left=21] (y1);
\foreach \x in {1,3} \draw[fill=black] (\x*1.5-3,-2.5) circle (0.1);
\foreach \x in {2,5} \draw[fill=black] (\x-3.5,0.5) circle (0.1);
\node at (0,-5) {$\tau=2$};
\end{scope}

\begin{scope}[xshift=15cm]
\draw[fill=black] (-2,2.5) {coordinate (x2)} circle (0.1);
\draw[fill=black] (2,2.5) {coordinate (x3)} circle (0.1);
\draw[fill=black] (-3,-1.5) {coordinate (y1)} circle (0.1);
\draw[fill=black] (0,-1.5) {coordinate (y2)} circle (0.1);
\draw[fill=black] (3,-1.5) {coordinate (y3)} circle (0.1);
\draw[fill=black] (0,-3.5) {coordinate (x1)} circle (0.1);
\draw[thick] (x2)--(y1) (x3)--(y3);
\draw[thick] (x1)--(y1) (x1)--(y2) (x1)--(y3);
\node[below] at (x1) {$x_1$}; \node[above] at (x2) {$x_2$}; \node[above] at (x3) {$x_3$};
\node[left] at (y1) {$y_1$}; \node[above] at (y2) {$y_2$}; \node[right] at (y3) {$y_3$};
\draw[thick] (x2) edge [bend right=19] (y2);
\draw[thick] (x3) edge [bend left=19] (y2);
\draw[thick] (x2) edge [bend right=21] (y3);
\draw[thick] (x3) edge [bend left=21] (y1);
\foreach \x in {1,2,3} \draw[fill=black] (\x*1.5-3,-2.5) circle (0.1);
\foreach \x in {1,2,...,6} \draw[fill=black] (\x-3.5,0.5) circle (0.1);
\node at (0,-5) {$\tau=3$};
\end{scope}

\end{tikzpicture}
\caption{Constructions of $G$ in Fact \ref{FaSimpleK4K33} (2)}\label{FiFaK33}
\end{figure}

    Suppose first that $\tau=0$. If there are two edges incident to $x_2$ that are not subdivided, say $x_2y_1,x_2y_2$, then $x_1y_1x_2y_2x_1$ converts to a 4-cycle, a contradiction. It follows that at least two edges incident to $x_2$ are subdivided, say $x_2y_1,x_2y_3$. If $\tau(y_1x_3y_3)=0$, then $x_1y_1x_3y_3x_1$ converts to a 4-cycle. If $\tau(y_1x_3y_3)=1$, then $x_2y_1x_3y_3x_2$ converts to a 7-cycle. Both cases we have a contradiction. It follows that $\tau(y_1x_3y_3)=2$, and both two edges $x_3y_1,x_3y_3$ are subdivided. If $x_2y_2$ is subdivided, then $x_1y_3x_3y_1x_2y_2x_1$ converts to a 10-cycle, a contradiction. Thus we conclude that $x_2y_2$, and similarly, $x_3y_2$, is not subdivided. Thus $G\cong K'_{3,3}$, as desired. 

    Suppose second that $\tau=1$, say $x_1y_2$ is subdivided. If $x_2y_2$ is subdivided, then $x_2y_1$ and $x_2y_3$ are not subdivided; for otherwise $x_1y_1x_2y_2x_1$ or $x_1y_2x_2y_3x_1$ converts to a 7-cycle. But now $x_1y_1x_2y_3x_1$ converts to a 4-cycle, a contradiction. Thus we conclude that $x_2y_2$, and similarly, $x_3y_2$, is not subdivided. If $\tau(y_1x_2y_3)=0$, then $x_1y_1x_2y_3x_1$ converts to a 4-cycle, a contradiction. If $\tau(y_1x_2y_3)=2$, then $y_1x_1y_3$, $y_1x_2y_3$, $y_1x_1y_2x_2y_3$ convert to three $(y_1,y_3)$-paths of lengths 2, 4, 6. All the three paths are internally vertex-disjoint with the path that $y_1x_3y_3$ converts to. Thus $G$ contains a $(1\bmod 3)$-cycle, a contradiction. Now we conclude that $\tau(y_1x_2y_3)=1$, say $x_2y_1$ is subdivided. If $x_3y_1$ is subdivided, then $x_1y_1x_3y_2x_2y_3x_1$ converts to a 7-cycle, a contradiction. Thus we conclude that $x_3y_1$ is not subdivided. By the choice of $K$, $x_3y_3$ is subdivided and $G\cong H_9$, as desired.

    Suppose third that $\tau=2$, say $x_1y_1, x_1y_3$ are subdivided. By the choice of $K$, $x_2y_2$ and $x_3y_2$ are subdivided. If $x_2y_1$ is subdivided, then $x_1y_1x_2y_2x_1$ converts to a 7-cycle, a contradiction. Thus we conclude that $x_2y_1$ is not subdivided, and similarly, $x_2y_3, x_3y_1, x_3y_3$ are not subdivided. But now $x_2y_1x_3y_3x_2$ converts to a 4-cycle, a contradiction.

    Suppose fourth that $\tau=3$. By the choice of $K$, each edge of $H$ is subdivided and $G\cong K_{3,3}^*$, as desired.
\end{proof}

Now let $G$ be an essentially $3$-connected graph without $(1\bmod 3)$-cycles such that $G$ does not contain two vertex-disjoint cycles. Let $H$ be the graph obtained from $G$ by contracting one of the two edges incident to each vertex in $N_2(G)$. Then $H$ is 3-connected without two vertex-disjoint cycles, and $G$ is a simple subdivision of $H$. By Theorem \ref{thm: Lovasz}, $H$ is isomorphic to $K_5^-$, $K_5$, $W_{p}$ or $K^{(i)}_{3,p}$, where $p\geq 3$ and $0\leq i\leq 3$. 

\setcounter{case}{0}
\begin{case}\label{CaK5-}
    $H\cong K_5^-$.
\end{case}

Set $V(H)=\{v_1,\ldots,v_5\}$, and let $v_1v_2\notin E(H)$. Let $\tau=\tau(v_3v_4v_5v_3)$ (see Figure \ref{FiCaK5-}). Note that $\tau\neq 1$ and that $H-v_1\cong H-v_2\cong K_4$. By Fact \ref{FaSimpleK4K33}, both $H-v_1$ and $H-v_2$ convert to an $H_7$, $H_8$ or $K_4^*$.

\begin{figure}[htbp]
\centering
\begin{tikzpicture}[scale=0.4]

\begin{scope}[xshift=-10cm]
\draw[fill=black] (-2,2.5) {coordinate (v1)} circle (0.1);
\draw[fill=black] (2,2.5) {coordinate (v2)} circle (0.1);
\draw[fill=black] (-3,-1.5) {coordinate (v3)} circle (0.1);
\draw[fill=black] (0,-1.5) {coordinate (v4)} circle (0.1);
\draw[fill=black] (3,-1.5) {coordinate (v5)} circle (0.1);
\draw[thick] (v1)--(v3) (v2)--(v5);
\draw[thick] (v3)--(v4)--(v5); \draw[thick] (v3) edge [bend right=45] (v5);
\node[above] at (v1) {$v_1$}; \node[above] at (v2) {$v_2$};
\node[left] at (v3) {$v_3$}; \node[below] at (v4) {$v_4$}; \node[right] at (v5) {$v_5$};
\draw[thick] (v1) edge [bend right=19] (v4);
\draw[thick] (v2) edge [bend left=19] (v4);
\draw[thick] (v1) edge [bend right=21] (v5);
\draw[thick] (v2) edge [bend left=21] (v3);
\foreach \x in {1,2,...,6} \draw[fill=black] (\x-3.5,0.5) circle (0.1);
\node at (0,-3.5) {$\tau=0$};
\end{scope}

\begin{scope}
\draw[fill=black] (-2,2.5) {coordinate (v1)} circle (0.1);
\draw[fill=black] (2,2.5) {coordinate (v2)} circle (0.1);
\draw[fill=black] (-3,-1.5) {coordinate (v3)} circle (0.1);
\draw[fill=black] (0,-1.5) {coordinate (v4)} circle (0.1);
\draw[fill=black] (3,-1.5) {coordinate (v5)} circle (0.1);
\draw[thick] (v1)--(v3) (v2)--(v5);
\draw[thick] (v3)--(v4)--(v5); \draw[thick] (v3) edge [bend right=45] (v5);
\node[above] at (v1) {$v_1$}; \node[above] at (v2) {$v_2$};
\node[left] at (v3) {$v_3$}; \node[below] at (v4) {$v_4$}; \node[right] at (v5) {$v_5$};
\draw[thick] (v1) edge [bend right=19] (v4);
\draw[thick] (v2) edge [bend left=19] (v4);
\draw[thick] (v1) edge [bend right=21] (v5);
\draw[thick] (v2) edge [bend left=21] (v3);
\foreach \x in {1,3} \draw[fill=black] (\x*1.5-3,-1.5) circle (0.1);
\foreach \x in {2,4,6} \draw[fill=black] (\x-3.5,0.5) circle (0.1);
\node at (0,-3.5) {$\tau=2$};
\end{scope}

\begin{scope}[xshift=10cm]
\draw[fill=black] (-2,2.5) {coordinate (v1)} circle (0.1);
\draw[fill=black] (2,2.5) {coordinate (v2)} circle (0.1);
\draw[fill=black] (-3,-1.5) {coordinate (v3)} circle (0.1);
\draw[fill=black] (0,-1.5) {coordinate (v4)} circle (0.1);
\draw[fill=black] (3,-1.5) {coordinate (v5)} circle (0.1);
\draw[thick] (v1)--(v3) (v2)--(v5);
\draw[thick] (v3)--(v4)--(v5); \draw[thick] (v3) edge [bend right=45] (v5);
\node[above] at (v1) {$v_1$}; \node[above] at (v2) {$v_2$};
\node[left] at (v3) {$v_3$}; \node[below] at (v4) {$v_4$}; \node[right] at (v5) {$v_5$};
\draw[thick] (v1) edge [bend right=19] (v4);
\draw[thick] (v2) edge [bend left=19] (v4);
\draw[thick] (v1) edge [bend right=21] (v5);
\draw[thick] (v2) edge [bend left=21] (v3);
\draw[fill=black] (0,-2.75) circle (0.1);
\foreach \x in {1,3} \draw[fill=black] (\x*1.5-3,-1.5) circle (0.1);
\foreach \x in {1,2,...,6} \draw[fill=black] (\x-3.5,0.5) circle (0.1);
\node at (0,-3.5) {$\tau=3$};
\end{scope}

\end{tikzpicture}
\caption{Constructions of $G$ in Case \ref{CaK5-}}\label{FiCaK5-}
\end{figure}

Suppose first that $\tau=0$. Then both $H-v_1$ and $H-v_2$ convert to an $H_7$, implying that all edges between $\{v_1,v_2\}$ and $\{v_3,v_4,v_5\}$ are subdivided. That is, $G\cong H_{11}$, as desired.

Suppose second that $\tau=2$. Then both $H-v_1$ and $H-v_2$ convert to an $H_7$ or $H_8$. In each case $\tau(v_3v_1v_4)=\tau(v_4v_2v_5)=1$. Now $v_3v_1v_4v_2v_5v_3$ converts to a 7-cycle, a contradiction.

Suppose third that $\tau=3$. Then both $H-v_1$ and $H-v_2$ converts to a $K_4^*$. This implies that all edges of $H$ are subdivided. Now $v_3v_1v_4v_2v_5v_1$ converts to a 10-cycle, a contradiction.

\begin{case}\label{CaK5}
    $H\cong K_5$.
\end{case}

For each edge $e$ in $H$, $H-e$ is a $K_5^-$. By the analysis of Case \ref{CaK5-}, we see that $H-e$ converts to an $H_{11}$ for each $e\in E(H)$. Thus there are two edges $e_1,e_2\in E(H)$ such that $e_1$ is subdivided and $e_2$ is not. Now $H-e_1$ and $H-e_2$ converts to two graphs with different edge numbers, one of which is not isomorphic to $H_{11}$, a contradiction.

\begin{case}\label{CaWp}
    $H\cong W_p$ with $p\geq 3$.
\end{case}

If $p=3$, then $H\cong K_4$. By Fact \ref{FaSimpleK4K33}, $G\in\{H_7, H_8, K_4^*\}$. Now we assume that $p\geq 4$. Let $u$ be the center of $H$ and $v_1v_2\ldots v_pv_1$ be the $p$-cycle of $H$. Let $C$ be a triangle in $H$ with $\tau:=\tau(C)$ as small as possible, say $C=uv_1v_pu$ (see Figure \ref{FiCaWp}). Note that $\tau\neq 1$. 

\begin{figure}[htbp]
\centering
\begin{tikzpicture}[scale=0.4]

\begin{scope}[xshift=-15cm]
\draw[thick] (0,0) circle (3); \draw[fill=black] (0,0) {coordinate (u)} circle (0.1);
\foreach \x in {0,1,2,7} {\draw[fill=black] (\x*45-112.5:3) {coordinate (v\x)} circle (0.1);
\draw[thick] (u)--(v\x);
\coordinate (x\x) at (\x*45-112.5:3.6);}
\node[above] at (u) {$u$}; \node[left] at (v7) {$v_{p-1}$}; 
\node at (x1) {$v_1$}; \node at (x2) {$v_2$}; \node at (x0) {$v_p$};
\foreach \x in {1,2,3,4} \draw[fill=black] (\x*0.5-1.25,1.8) circle (0.05);
\foreach \x in {2,7} \draw[fill=black] (\x*45-112.5:1.8) circle (0.1);
\foreach \x in {0,2} \draw[fill=black] (\x*45-135:3) circle (0.1);
\node at (0,-4.8) {$\tau=0$};
\end{scope}

\begin{scope}[xshift=-5cm]
\draw[thick] (0,0) circle (3); \draw[fill=black] (0,0) {coordinate (u)} circle (0.1);
\foreach \x in {0,1,2,7} {\draw[fill=black] (\x*45-112.5:3) {coordinate (v\x)} circle (0.1);
\draw[thick] (u)--(v\x);
\coordinate (x\x) at (\x*45-112.5:3.6);}
\node[above] at (u) {$u$}; \node[left] at (v7) {$v_{p-1}$}; 
\node at (x1) {$v_1$}; \node at (x2) {$v_2$}; \node at (x0) {$v_p$};
\foreach \x in {1,2,3,4} \draw[fill=black] (\x*0.5-1.25,1.8) circle (0.05);
\foreach \x in {0,1,2} \draw[fill=black] (\x*45-112.5:1.8) circle (0.1);
\foreach \x in {0} \draw[fill=black] (\x*45-135:3) circle (0.1);
\node at (0,-4.8) {$\tau=2$, $\tau(v_1v_p)=0$};
\end{scope}

\begin{scope}[xshift=5cm]
\draw[thick] (0,0) circle (3); \draw[fill=black] (0,0) {coordinate (u)} circle (0.1);
\foreach \x in {0,1,2,6,7} {\draw[fill=black] (\x*45-112.5:3) {coordinate (v\x)} circle (0.1);
\draw[thick] (u)--(v\x);
\coordinate (x\x) at (\x*45-112.5:3.6);}
\node[above] at (u) {$u$}; \node[left] at (v6) {$v_{p-2}$}; \node[left] at (v7) {$v_{p-1}$}; 
\node at (x1) {$v_1$}; \node at (x2) {$v_2$}; \node at (x0) {$v_p$};
\foreach \x in {1,2,3,4} \draw[fill=black] (\x*0.5-1.25,1.8) circle (0.05);
\foreach \x in {0,2,6} \draw[fill=black] (\x*45-112.5:1.8) circle (0.1);
\foreach \x in {-1,0,1,2} \draw[fill=black] (\x*45-135:3) circle (0.1);
\node at (0,-4.8) {$\tau=2$, $\tau(v_1v_p)=1$};
\end{scope}

\begin{scope}[xshift=15cm]
\draw[thick] (0,0) circle (3); \draw[fill=black] (0,0) {coordinate (u)} circle (0.1);
\foreach \x in {0,1,2,7} {\draw[fill=black] (\x*45-112.5:3) {coordinate (v\x)} circle (0.1);
\draw[thick] (u)--(v\x);
\coordinate (x\x) at (\x*45-112.5:3.6);}
\node[above] at (u) {$u$}; \node[left] at (v7) {$v_{p-1}$}; 
\node at (x1) {$v_1$}; \node at (x2) {$v_2$}; \node at (x0) {$v_p$};
\foreach \x in {1,2,3,4} \draw[fill=black] (\x*0.5-1.25,1.8) circle (0.05);
\foreach \x in {0,1,2,7} \draw[fill=black] (\x*45-112.5:1.8) circle (0.1);
\foreach \x in {0,1,2} \draw[fill=black] (\x*45-135:3) circle (0.1);
\node at (0,-4.8) {$\tau=3$};
\end{scope}

\end{tikzpicture}
\caption{Constructions of $G$ in Case \ref{CaWp}}\label{FiCaWp}
\end{figure}

Suppose first that $\tau=0$. If $\tau(uv_2v_1)=0$, then $uv_pv_1v_2u$ converts to a 4-cycle; if $\tau(uv_2v_1)=1$, then $uv_1v_2u$ converts to a 4-cycle, both contradictions. So we conclude that $\tau(uv_2v_1)=2$, and similarly, $\tau(uv_{p-1}v_p)=2$. That is, all four edges $uv_2,v_1v_2,uv_{p-1},v_{p-1}v_p$ are subdivided. Now $v_{p-1}uv_2$, $v_{p-1}v_pv_1v_2$, $v_{p-1}v_puv_1v_2$ convert to three $(v_{p-1},v_2)$-paths of length 4, 5, 6, respectively. All the three paths are internally vertex-disjoint with the path that $v_2v_3\ldots v_{p-1}$ converts to. Thus $G$ contains a $(1\bmod 3)$-cycle, a contradiction.

Suppose second that $\tau=2$. If $\tau(v_1v_p)=0$, then $uv_1,uv_p$ are subdivided. In this case $\tau(uv_2v_1)\neq 2$; for otherwise $uv_pv_1v_2u$ converts a 7-cycle. It follows that $\tau(uv_2v_1)=1$, and similarly $\tau(uv_{p-1}v_p)=1$. Now $uv_{p-1}v_pv_1v_2u$ converts a 7-cycle, a contradiction. So we conclude that $\tau(v_1v_p)=1$ (i.e., $v_1v_p$ is subdivided), and assume without loss of generality that $uv_p$ is subdivided. If $\tau(uv_{p-1}v_p)=2$, then $uv_{p-1}v_pv_1u$ converts a 7-cycle, a contradiction. Thus we conclude that $\tau(uv_{p-1}v_p)=1$. If $uv_{p-1}$ is subdivided, then we can get a contradiction by the analysis above. So we conclude that $v_{p-1}v_p$ is subdivided. By the choice of $C$, $uv_{p-2}$, $v_{p-2}v_{p-1}$, $v_1v_2$ and $uv_2$ are subdivided. If $p=4$, then $v_{p-2}=v_2$ and $G\cong\hat{W}_4$, as desired. Now assume that $p\geq 5$. It follows that $v_{p-2}uv_2$, $v_{p-2}v_{p-1}uv_1v_2$, $v_{p-2}v_{p-1}v_pv_1v_2$ convert to three $(v_{p-2},v_2)$-paths of lengths 4, 6 and 8, respectively. All the three paths are internally vertex-disjoint with the path that $v_2v_3\ldots v_{p-2}$ converts to. Thus $G$ contains a $(1\bmod 3)$-cycle, a contradiction.

Suppose third that $\tau=3$. By the choice of $C$, each edge of $H$ is subdivided, and $uv_1v_2v_3v_4u$ converts a 10-cycle, a contradiction.

\begin{case}\label{CaK3pi}
    $H\cong K_{3,p}^{(i)}$ with $p\geq 3$ and $i\in[0,3]$.
\end{case}

We let $X=\{x_1,x_2,x_3\}$ and $Y=\{y_1,y_2,\ldots,y_p\}$ be the partition sets of $H$, where $e(H[X])=i$. 

\begin{subcase}
    $p=3$.
\end{subcase}

If $i=0$, then $H\cong K_{3,3}$. By Fact \ref{FaSimpleK4K33}, $G\in\{H_9, K'_{3,3},K_{3,3}^*\}$. Now assume that $i\geq 1$.  Let $H'=H-E(H[X])$ and $G'$ be the subgraph of $G$ that $H'$ converts to. By Fact \ref{FaSimpleK4K33}, $G'\in\{H_9, K'_{3,3},K_{3,3}^*\}$. If $G'\cong H_9$, then each two vertices in $X$ are connected by a 2-path and a 3-path in $G'$. If $G'\cong K'_{3,3}$, then each two vertices in $X$ are connected by a 2-path and a 3- or 6-path in $G'$. Since there is an edge $e\in E(H[X])$, no matter $e$ is subdivided or not, $G$ has a $(1\bmod 3)$-cycle, a contradiction. So we conclude that $G'\cong K_{3,3}^*$, implying that all edges between $X$ and $Y$ are subdivided. Now each two vertices in $X$ are connected by a 8-path in $G$. If some edge in $E(H[X])$ is subdivided, then $G$ contains a 10-cycle, a contradiction. So we conclude that every edge in $E(H[X])$ is not subdivided, and $G\cong\hat{K}_{3,3}^{(i)}$.

\begin{subcase}
    $p\geq 4$.
\end{subcase}

For simplicity we denote by $H_{ijk}$, $1\leq i<j<k\leq p$, the subgraph of $H$ induced by the edges between $X$ and $\{y_i,y_j,y_k\}$, and let $G_{ijk}$ be the subgraph of $G$ that $H_{ijk}$ converts to. By Fact \ref{FaSimpleK4K33}, we have $G_{ijk}\in\{H_9,K'_{3,3}, K_{3,3}^*\}$. Let $K$ be a claw of $H$ with center in $Y$ with $\tau:=\tau(K)$ as small as possible, say $y_1$ is the center of $K$.

Suppose first that $\tau=0$. Then $G_{1jk}\cong K'_{3,3}$ for $2\leq j<k\leq p$. It follows that each vertex in $Y\backslash\{y_1\}$ is incident to exactly two subdivided edges. Now $G_{234}$ is not isomorphic to $H_9$, $K'_{3,3}$ and $K_{3,3}^*$, a contradiction.

Suppose second that $\tau=1$. Then $G_{1jk}\cong H_9$ for $2\leq j<k\leq p$. It follows that each vertex in $Y\backslash\{y_1\}$ is incident to exactly one subdivided edge. Thus two subdivided edges between $X$ and $Y$ are adjacent, say $x_1y_2,x_1y_3$. Now $G_{234}$ is not isomorphic to $H_9$, $K'_{3,3}$ and $K_{3,3}^*$, a contradiction.

Suppose third that $\tau=2$. Then $G_{1jk}\cong K'_{3,3}$ for $2\leq j<k\leq p$. It follows that some vertex in $Y$ is not incident to any subdivided edge, contradicting the choice of $K$.

Suppose fourth that $\tau=3$. By the choice of $K$, all edges between $X$ and $Y$ are subdivided. Note that each two vertices in $X$ are connected by a 8-path in $G$. This implies that all edges in $E(H[X])$ is not subdivided. Thus $G\cong\hat{K}_{3,p}^{(i)}$, as desired.

\section{Concluding remarks}
\label{sec:concluding}

We conclude with some remarks on graphs without $(2\bmod 4)$-cycles.
Gao {\it et al.} \cite{G+24} showed that if $G$ contains no $(2\bmod 4)$-cycle, then $e(G)\leq \frac{5}{2} (n-1)$ and, for $4\mid (n-1)$, the equality holds if and only if each block of $G$ is isomorphic to $K_5$. 
We consider the 2-connected setting and determine the corresponding maximum number of edges. For $k\geq 0$, we let $F_k=K_2\vee(K_2\cup kK_1)$ (see Figure \ref{FiGraphFk}). Note that $F_0=K_4$. The following results are needed in our proof.

\begin{figure}[htbp]
\centering
\begin{tikzpicture}[scale=0.4]

\foreach \x in {1,2} \draw[fill=black] (\x*2-3,0) {coordinate (x\x)} circle (0.1);
\foreach \y in {1,2} \draw[fill=black] (\y*2-3,2) {coordinate (y\y)} circle (0.1);
\foreach \z in {1,2,4,5} \draw[fill=black] (\z*2-6,-2.5) {coordinate (z\z)} circle (0.1);
\draw[thick] (x1)--(x2) (y1)--(y2);
\foreach \x in {1,2} \foreach \y in {1,2} \draw[thick] (x\x)--(y\y);
\foreach \x in {1,2} \foreach \z in {1,2,4,5} \draw[thick] (x\x)--(z\z);
\foreach \x in {1,2,3,4} \draw[fill=black] (\x*0.6-1.5,-2.5) circle (0.05);
\node[left] at (x1) {$x_1$}; \node[right] at (x2) {$x_2$};
\node[left] at (y1) {$y_1$}; \node[right] at (y2) {$y_2$};
\node[below] at (z1) {$z_1$}; \node[below] at (z2) {$z_2$};
\node[below] at (z4) {$z_{k-1}$}; \node[below] at (z5) {$z_k$};

\end{tikzpicture}
\caption{Graph $F_k$.}
\label{FiGraphFk}
\end{figure}

\begin{theorem}[Fan \cite{Fan84}]\label{thm:Fan}
  Let $G$ be a $2$-connected graph on $n\ge 3$ vertices such that for any two vertices of distance two, there is one with degree at least $\frac{n}{2}$. Then $G$ contains a Hamilton cycle.
\end{theorem}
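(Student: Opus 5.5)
Since this is Fan's classical theorem, cited rather than proved here, I would give the standard longest-cycle argument and not try to be original. Suppose $G$ is not Hamiltonian. Let $C$ be a longest cycle of $G$ with a fixed orientation, and write $v^+$ and $v^-$ for the successor and predecessor of $v$ on $C$. Let $R$ be a component of $G-V(C)$, and let $x_1,\dots,x_k$ be the vertices of $C$ that have a neighbor in $R$, listed in cyclic order. Since $G$ is $2$-connected, $k\ge 2$.

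The first step collects the usual facts about a longest cycle.
\begin{enumerate}
\item No $x_i^+$ has a neighbor in $R$, and similarly for $x_i^-$.
\item The vertices $x_i^+$ are pairwise nonadjacent, and so are the $x_i^-$.
\item The Ore-type crossing lemma: for $i\neq j$ and any path $P$ through $R$ from $x_i$ to $x_j$, no vertex $w$ on $C$ has $x_i^+w\in E$ and $x_j^+w^+\in E$. Otherwise rerouting through $P$ gives a longer cycle.
\end{enumerate}
From these facts I will derive the basic inequality: if two of the vertices $x_i^+$ (or two of the $x_i^-$) are nonadjacent, lie at distance two, and each have degree at least $n/2$, then counting their neighbors on $C$ together with the vertex sets of $R$ and $C$ gives $d(x_i^+)+d(x_j^+)\le n-1$. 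This is a contradiction.

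The second step turns the distance-two hypothesis into such a pair. For an attachment $x_i$ and a neighbor $u\in R$ of $x_i$, the vertices $u$ and $x_i^+$ are at distance two via $x_i$. So either $d(x_i^+)\ge n/2$ or $d(u)\ge n/2$, and the same holds for $x_i^-$. In addition, $x_i^+$ and $x_j^+$ are at distance two whenever they have a common neighbor. I split into cases.
\begin{itemize}
\item If two vertices $x_i^+,x_j^+$ (or $x_i^-,x_j^-$) are large, the crossing argument finishes the proof. When these two vertices are not at distance two, I would instead use them directly as an Ore pair: they are nonadjacent, and the crossing count only needs the degree sum, so distance two is not required there.
\item Otherwise at most one $x_i^+$ and at most one $x_i^-$ is large. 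Then for almost all attachments $x_i$, every $R$-neighbor of $x_i$ has degree at least $n/2$.
\end{itemize}

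The third step, which I expect to be the main obstacle, handles this remaining case: vertices of $R$ have large degree while most successors and predecessors of attachments are small. First, the neighbors of a vertex $u\in R$ lie in $R\cup\{x_1,\dots,x_k\}$. Second, the attachments $x_i$ are separated on $C$ by the small vertices $x_i^\pm$, which are forbidden to be neighbors of $R$. Together these give $k\le |C|/2$. Hence $d(u)\ge n/2$ forces $|R|$ to be large, roughly $|R|>n/2-k$. Now take $u,v\in R$ with $u\sim x_i$ and $v\sim x_j$, joined by a longest $(u,v)$-path inside $R$. Since $G$ is $2$-connected, such a pair exists with $i\neq j$. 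Comparing the length of that path with the $C$-segment between $x_i$ and $x_j$ should produce a cycle longer than $C$. The ends of a longest path in $R$ have all their $R$-neighbors on the path, which again gives length about $d(u)-k$. I would first attempt this by choosing $R$, $u$, $v$ extremally, maximizing the length of the path in $R$. If that fails, I would follow Fan's original strategy of choosing $C$ and $R$ lexicographically, first maximizing $|C|$ and then $|R|$, and rerun the counting.
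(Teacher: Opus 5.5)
The paper does not prove this statement: it quotes it from Fan and uses it only once, for $n=6$ in the proof of Theorem~\ref{thm:main-2mod4}. Your proposal therefore has to stand on its own, and it does not. There is a genuine gap. Steps 1--2 are the standard longest-cycle setup and are essentially fine. However, the case you postpone to Step 3 is exactly where the content of Fan's theorem lies, and it is left unproved.

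Steps 1--2 can in fact be sharpened to isolate that case. The injection $x_l\mapsto x_l^+$ from $N_C(u)$ into $V(C)\setminus N_C(x_j^+)$ works for every $u\in R$, not just for $x_i^+$. It gives $d(u)+d(x_j^{\pm})\le n-1$ for all $u\in R$ and all $j$. So if some $x_j^{\pm}$ has degree $\ge n/2$, then all of $R$ is small. Hence every $x_l^{+}$ is big, and two of them form an Ore pair, a contradiction. What remains is the following configuration:
\begin{itemize}
\item every $x_j^{\pm}$ has degree $<n/2$, so $x_j^-x_j^+\in E$, and $x_j^+,x_l^+$ have no common neighbour for $j\neq l$;
\item every vertex of $R$ with a neighbour on $C$ has degree $\ge n/2$.
\end{itemize}
Nothing you prove excludes this configuration.

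Your sketch for this case does not work as written.
\begin{itemize}
\item The property ``all $R$-neighbours of an end lie on the path'' holds for the ends of a longest path of $R$. Such ends need not have neighbours on $C$, so they need not be big. Conversely, the ends of a longest $(u,v)$-path with $u\in N_R(x_i)$, $v\in N_R(x_j)$ are big but do not have that property.
\item Vertices of $R$ away from $C$ may have small degree; Fan's hypothesis, unlike Dirac's or Ore's, allows this. So no lower bound of order $d(u)-k$ follows for the length of an $R$-path between neighbours of two distinct attachments.
\item You would also need such a path $P$ between attachments whose shorter $C$-arc has length less than $\ell(P)+2$, and you give no argument for this.
\item Deferring to ``Fan's original strategy'' is not an argument.
\end{itemize}
The missing idea is a genuine long-path (or rotation) argument inside $R$ under the distance-two condition.

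A smaller point: your Fact 3 as stated is false on the arc from $x_i^+$ to $x_j$. There the forbidden pair is $x_j^+w,\ x_i^+w^+$, not $x_i^+w,\ x_j^+w^+$. For example, take $C=c_0c_1\cdots c_7c_0$, $R=\{r\}$ with $r\sim c_0,c_4$, and add the chord $c_3c_5$. Then $C$ is a longest cycle, yet $w=c_2$ has $x_i^+w=c_1c_2\in E$ and $x_j^+w^+=c_5c_3\in E$. The Ore count in Step 1 still goes through with the corrected, arc-dependent version.
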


\begin{theorem}[Bai {\it et al.} \cite{B+202511}]\label{thm:previous-2mod4}
    Let $G$ be a $2$-connected graph with minimum degree at least $k-1$ and order at least $k+2$, where $k\ge 4$ is even. Then $G$ contains cycles of lengths $\ell$ modulo $k$ for all even $\ell$.
\end{theorem}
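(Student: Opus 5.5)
Since this result is quoted from \cite{B+202511}, I only sketch how I would reprove it. I would use a longest-path argument of Bondy--Vince type, combined with $2$-connectivity, to produce $k/2$ cycles whose lengths are consecutive even integers $2m, 2m+2, \ldots, 2m+k-2$. Such a family meets every even residue class modulo $k$, which is exactly the claim. So the whole proof reduces to producing an arithmetic progression of even cycle lengths with common difference $2$ and $k/2$ terms.

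\textbf{Step 1 (one endpoint).} Take a longest path $P=v_0v_1\ldots v_m$ in $G$. By maximality every neighbour of $v_0$ lies on $P$, so $v_0$ has neighbours $v_{i_1},\ldots,v_{i_d}$ with $d\ge k-1$ and $1=i_1<\cdots<i_d$. For each $j\ge 2$ the cycle $C_j=v_0v_1\ldots v_{i_j}v_0$ has length $i_j+1$.
\begin{itemize}
\item If $G$ is bipartite, all $i_j$ are odd, so these cycles already give $k/2-1$ or more distinct even lengths. The only defect is possible gaps larger than $2$.
\item If $G$ is not bipartite, the parities of the $i_j$ are mixed. Here I would use the ``chord-crossing'' trick: replace the segment $v_0v_1\ldots v_{i_j}$ by $v_0v_{i_{j'}}v_{i_{j'}-1}\ldots v_1\ldots$. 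This produces cycles of lengths $i_j-i_{j'}+2$ as well, giving many lengths of both parities.
\end{itemize}

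\textbf{Step 2 (closing the gaps with the other end).} To close gaps I would use $2$-connectivity, following Fan's method for paths of consecutive lengths. Apply the same analysis at $v_m$, whose neighbours also lie on $P$. Since $G$ is $2$-connected, either the chords from $v_0$ and from $v_m$ cross, or some $P$-avoiding path joins the two ``ends''. In the crossing configuration, a neighbour $v_{i}$ of $v_0$ and a neighbour $v_{i'}$ of $v_m$ with $i'<i$ yield cycles $v_0\ldots v_{i'}v_m v_{m-1}\ldots v_i v_0$ whose lengths vary in a controlled way. The target is a statement of the form: there are two vertices $x,y$ and $(x,y)$-paths of lengths $\ell,\ell+2,\ldots,\ell+k-2$, all internally disjoint from a fixed $(x,y)$-path $Q$. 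Closing each of them with $Q$ then gives the desired progression of cycle lengths.

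\textbf{Step 3 (the hypothesis $n\ge k+2$).} This condition excludes $K_k$ and $K_{k-1,k-1}$-type obstructions at the boundary. I would check it by hand in the extremal case $\delta=k-1$, where the neighbourhood of $v_0$ is forced to be tightly packed along $P$. There a direct cycle count should give the residues, for instance by exhibiting a large near-complete piece, or Hamiltonicity via Theorem~\ref{thm:Fan}.

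\textbf{Main obstacle.} I expect the hardest step to be Step 2: forcing consecutive even differences when the neighbours of $v_0$ are spread out along $P$. My first attempt would be induction on the number of blocks of $G-v_0$ (or of $G$ with the path structure fixed). The aim is to find an end block $B$ of $G-v_0$ in which every vertex other than the cut vertex has degree at least $k-2$ in $B$. One then applies the path version of the statement inside $B$, between the cut vertex and a neighbour of $v_0$, which returns through $v_0$ to close the cycles.
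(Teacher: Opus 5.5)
The paper gives no proof of this statement. It is quoted from \cite{B+202511} and used only with $k=4$, to get a vertex of degree $2$ in the proof of Theorem~\ref{thm:main-2mod4}. So your sketch has to stand on its own, and it does not. The one step with real content is producing $x,y$, a closing path $Q$, and $k/2$ $(x,y)$-paths internally disjoint from $Q$ with lengths $\ell,\ell+2,\ldots,\ell+k-2$. That step is only announced as a target, never proved.

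Step~1 contributes nothing toward it. If every neighbour of $v_0$ sits at a position $i_j\equiv 1\pmod 4$, then all the lengths $i_j+1$ and $i_j-i_{j'}+2$ are $\equiv 2\pmod 4$. For $k=4$ you then see a single residue class, so the ``gaps'' you postpone are the whole problem. Also, non-bipartiteness of $G$ is a global property and does not force the $i_j$ to have mixed parities. Step~2 does not say which lengths the crossing configurations produce or why they are spaced by exactly $2$. Its dichotomy (``the chords cross or a $P$-avoiding path joins the ends'') is left imprecise.

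The reduction in your last paragraph cannot supply the missing step as formulated. Deleting $v_0$ lowers its neighbours' degrees, so $G-v_0$ no longer satisfies the hypotheses. If $G-v_0$ is $2$-connected there is no end block at all. You therefore need a rooted path lemma.

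Any such lemma whose hypothesis records only that non-root vertices have degree at least $k-2$ in $B$ is false. For $k=4$ that hypothesis is vacuous for a $2$-connected block. Take $B=C_5$: it has exactly two $(c,u)$-paths, of lengths $a$ and $5-a$. Their difference is odd, so they never differ by $2$. The argument must keep the several edges from $v_0$ into $B$, or vary the closing path through $v_0$, and that is precisely the missing idea.

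For general $k$, I believe the usual Bondy--Vince/Fan-type path lemmas need off-root degree about one more than the number of paths they output. For non-bipartite blocks they only guarantee common difference $1$. From degree $k-2$ this gives at most about $k-3$ consecutive lengths, of which at most $k/2-1$ have the required parity — short of the $k/2$ you need.

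Finally, Step~3 misreads the order hypothesis. The condition $n\ge k+2$ is simply necessary, since a cycle of length $\equiv 2\pmod k$ has length at least $k+2$. It rules out every graph of order at most $k+1$ (e.g.\ $K_k$, $K_{k+1}$). It does not rule out $K_{k-1,k-1}$, which has order $2k-2\ge k+2$, contains even cycles of every length $4,\ldots,2k-2$, and so is no obstruction.
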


\begin{theorem}\label{thm:main-2mod4}
For $n\geq 3$, 
\[ ex_2(n,\mathcal{C}_{2\bmod 4})
~=~\begin{cases}
    \binom{n}{2}, &  \text{if } n\in[3,5]; \\
    2n-2, & \text{if } n\ge 6.
\end{cases} \]
The extremal graphs are $K_n$ for $n\in[3,5]$ and $F_{n-4}$ for $n\ge 6$. 
\end{theorem}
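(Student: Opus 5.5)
For $n\in[3,5]$ the plan is a direct check. A $(2\bmod 4)$-cycle of length at most $5$ has length $2$, which is impossible in a simple graph, so $K_3,K_4,K_5$ contain no such cycle. Hence $ex_2(n,\mathcal{C}_{2\bmod 4})=\binom{n}{2}$ with unique extremal graph $K_n$.

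For $n\ge 6$ the lower bound comes from $F_{n-4}$. It has $n$ vertices and $1+1+4+2(n-4)=2n-2$ edges, and it is $2$-connected. To exclude $(2\bmod 4)$-cycles, note that every $z_j$ has neighborhood $\{x_1,x_2\}$. So a cycle uses at most two of the $z_j$'s, and each $z_j$ it uses contributes a path $x_1z_jx_2$. I would list all cycles: those inside $K_4=F_0$ have lengths $3,4$; a cycle through one $z_j$ has length $3$ (with edge $x_1x_2$) or $4$ (via one $y$); a cycle through $y_1y_2$ and one $z_j$ has length $5$; and a cycle through two $z$'s has length $4$. None has length $6$ or more, so all lengths lie in $\{3,4,5\}$.

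For the upper bound, take a $2$-connected $G$ on $n\ge 6$ vertices with no $(2\bmod 4)$-cycle and $e(G)\ge 2n-2$. I would induct on $n$ by deleting low-degree vertices.
\begin{itemize}
\item \emph{Minimum degree at least $3$.} Apply Theorem~\ref{thm:previous-2mod4} with $k=4$: $G$ has minimum degree $\ge 3=k-1$ and order $\ge 6=k+2$, so it contains a $(2\bmod 4)$-cycle, a contradiction. Hence $\delta(G)=2$.
\item \emph{Removing a degree-$2$ vertex $v$.} If $G-v$ is $2$-connected, then $e(G-v)\ge 2(n-1)-2$. For $n-1\ge 6$, induction gives equality and $G-v\cong F_{n-5}$. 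One then checks that the only way to attach a degree-$2$ vertex to $F_{n-5}$ without creating a $6$-cycle or a $2$-cycle modulo $4$ is to join it to $\{x_1,x_2\}$, giving $F_{n-4}$.
\item \emph{Base case $n-1=5$.} Here $e(G-v)\ge 8$ on $5$ vertices, i.e.\ $K_5$ minus at most two edges. Adding $v$ of degree $2$ almost always creates a $6$-cycle, because such a dense $5$-vertex graph has Hamiltonian paths between most pairs; I would do a finite check, using Theorem~\ref{thm:Fan} if convenient.
\item \emph{$G-v$ not $2$-connected.} Then the neighbors $a,b$ of $v$ are inner vertices of the two end blocks of the block-chain $G-v$. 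Suppress $v$ to the edge $ab$ if $a,b$ are nonadjacent, or contract along the path $avb$. One then argues, as in Claim~\ref{ClG'2connected}, that the edge count forces a contradiction or the structure $F_{n-4}$. Long threads also need handling: if $v$ lies on a thread of length $\ge 3$, then deleting the thread interior loses few edges, giving a contradiction for large $n$.
\end{itemize}

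The main obstacle is the degree-$2$ reduction when $G-v$ fails to be $2$-connected, or when every degree-$2$ vertex has a degree-$2$ neighbor, because the edge count then drops by only about $2$ per vertex removed. My fallback is a counting argument. Let $D$ be the set of degree-$2$ vertices. If $|D|$ is small, then $e(G)\ge 2n-2$ forces a dense core of minimum degree $\ge 3$ after suppressing threads, and Theorem~\ref{thm:previous-2mod4} applies to that core. If $|D|$ is large, every degree-$2$ vertex is attached by threads of length $2$ between a few high-degree vertices. I would then argue that two hubs $x_1,x_2$ must receive almost all threads, and that these two hubs must be adjacent with the remaining two vertices forming an edge joined to both hubs, which is $F_{n-4}$.
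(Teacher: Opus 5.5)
Most of your proposal is sound: the case $n\le 5$, the lower bound from $F_{n-4}$, using Theorem~\ref{thm:previous-2mod4} to get a degree-$2$ vertex $v$, the case where $G-v$ is $2$-connected, and the finite check at $n=6$. At $n=6$, $e(G-v)\ge 8$ on five vertices already forces $G-v\in\{K_5,K_5^-,W_4,F_1\}$, so $G-v$ is automatically $2$-connected.

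\textbf{The gap.} The case where $G-v$ is not $2$-connected is not handled, and neither route you suggest works.

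\begin{itemize}
\item Claim~\ref{ClG'2connected} works because it identifies the ends of a $3$-thread. Its length is $0\bmod 3$, so residues of cycles through the merged vertex are preserved.
\item Here the thread $avb$ has length $2$. Note also that $ab\notin E(G)$ automatically, since $a,b$ are inner vertices of different blocks.
\item Suppressing $v$ into an edge $ab$ shifts every cycle through $ab$ by $1$. Identifying $a$ with $b$ shifts every cycle through the new vertex by $2$.
\item The reduced graph has $n'=n-1$ (resp.\ $n-2$) vertices and at least $2n'-1>2n'-2$ edges. So by induction it contains a $(2\bmod 4)$-cycle.
\item But that cycle may pass through $ab$ (resp.\ the merged vertex). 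It then corresponds only to a $(3\bmod 4)$-cycle (resp.\ $(0\bmod 4)$-cycle) of $G$, which is no contradiction.
\end{itemize}

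Your fallback fails for the same reason. Theorem~\ref{thm:previous-2mod4} applied to a core obtained by suppressing threads gives cycles of every even residue in the core. These do not lift to cycles of a prescribed residue in $G$, because each suppressed thread changes lengths by an uncontrolled amount. Deleting the interior of a long thread gives a contradiction only when the remainder is $2$-connected, which is the same unresolved situation. The ``two hubs'' step is not yet an argument.

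\textbf{The missing idea.} You need a direct analysis of the block-chain $G'=G-v$. Let its blocks be $H_1,\dots,H_t$ with $t\ge 2$, its cut-vertices $x_1,\dots,x_{t-1}$, and let $x_0,x_t$ be the neighbours of $v$.

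\begin{itemize}
\item By induction and small cases, each block not isomorphic to $K_5$ or $K_5^-$ has $e(H_i)\le 2n_i-2$, with equality only for $F_{n_i-4}$ or $W_4$.
\item If no block is $K_5$ or $K_5^-$, summing gives $e(G')\le 2n-4$. Equality is forced, so every block is some $F_k$ or $W_4$.
\item Between any two of their vertices these graphs have paths of three consecutive lengths. So two blocks already give at least four consecutive $(x_0,x_t)$-path lengths. One of these is $0\bmod 4$ and closes with $x_tvx_0$ into a $(2\bmod 4)$-cycle.
\item A block $K_5$, or a $K_5^-$ containing the edge $x_{r-1}x_r$, gives path lengths $1,2,3,4$ directly.
\item Suppose instead $H_r\cong K_5^-$ with $x_{r-1}x_r$ missing; it gives lengths $2,3,4$. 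Every other block must then avoid $\{K_5,K_5^-,W_4\}\cup\{F_k\}$, so it has at most $2n_i-3$ edges. This forces $t=2$ and $e(H_2)=2n_2-3$.
\item Finally, show that $x_1x_2\notin E(H_2)$ and that $H_2+x_1x_2$ has no $(2\bmod 4)$-cycle. Then $H_2+x_1x_2\in\{W_4\}\cup\{F_k\}$. Every edge of these lies in a triangle, which produces a $6$-cycle in $G$.
\end{itemize}

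Without an argument of this kind, the upper bound is not established.
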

 
\begin{proof}
We apply induction on $n$. For $n\in[3,5]$, since $K_n$ contains no $(2\bmod 4)$-cycles, the assertion then holds clearly. Assume that $n\geq 6$. Let $G$ be a $2$-connected $n$-vertex graph without $(2\bmod 4)$-cycles and with the maximum number of edges subject to this condition. In view of $F_{n-4}$, we have $e(G)\geq 2n-2$. We will show that $e(G)=2n-2$ and $G\cong F_{n-4}$.
 
For $n=6$, by Theorem \ref{thm:Fan} and $G$ containing no 6-cycle, there exist $u,v$ with distance two in $G$ and $\max\{d_G(u),d_G(v)\}<\frac{n}{2}=3$. By $G$ being 2-connected, $d_G(u)=d_G(v)=2$. It follows that $e(G)=4+e(G-\{u,v\})\leq 4+\binom{4}{2}=10$ and the equality holds exactly when $G-\{u,v\}\cong K_4$. By $e(G)\ge 10$, we have $e(G)=10$. If $N_G(u)\neq N_G(v)$, then $G$ clearly contains a 6-cycle. So $N_G(u)=N_G(v)$ and $G\cong F_2$, as desired. 

Assume that $n\ge 7$. Then there is a vertex $v$ of degree $2$ in $G$ by Theorem~\ref{thm:previous-2mod4}. Let $G'=G-v$. Then $e(G')=e(G)-2\geq 2n-4$. If $G'$ is 2-connected, then by the induction hypothesis, $G'\cong F_{n-5}$. We label the vertices of $G'$ as in Figure \ref{FiGraphFk}. We notice that each two vertices of $G'$, except $x_1,x_2$, are connected by a 4-path. Since $G$ contains no $(2\bmod 4)$-cycles, we have $N_G(v)=\{x_1,x_2\}$, and $G\cong F_{n-4}$, as desired. So we conclude that $G'$ is not 2-connected.

Since $G$ is 2-connected and $d_G(v)=2$, we see that $G'$ is a block-chain, and $N_G(v)$ consists of two inner-vertices of different end-blocks of $G'$. Let $H_1,H_2,\ldots,H_t$ be the blocks of $G'$, and let $x_i$, $i\in[1,t-1]$, be the cut-vertex of $G'$ common to $H_i$ and $H_{i+1}$. Let $x_0,x_t$ be the two neighbors of $v$ in $H_1$ and $H_t$, respectively.

For any $H_i$ isomorphic to neither $K_5$ nor $K_5^-$, one can see that $e(H_i)\leq 2n_i-2$, and the equality holds if and only if $n_i\geq 4$, $H_i=F_{n_i-4}$, or $n_i=5$, $H_i=W_4$. If $H_i$ is isomorphic to neither $K_5$ nor $K_5^-$ for any $i\in[1,t]$, then by the induction hypothesis, 
\[ 
2n-4\le e(G')=\sum_{i=1}^{t}e(H_i)\le\sum_{i=1}^{t}(2n_i-2)=2(n-1+t-1)-2t=2n-4.
\]
Thus $e(G')=2n-4$. It follows that $n_i\ge 4$ and $H_i$ is isomorphic to $W_4$ or $F_k$ for any $i\in[1,t]$. We notice that each two vertices of a $W_4$ or $F_k$ are connected by three paths of consecutive lengths. Thus $G'$ has five $(x_0,x_t)$-paths of consecutive lengths, one of which, together with $x_tvx_0$, form a $(2\bmod 4)$-cycle in $G$, a contradiction. So we can assume that $H_r\cong K_5$ or $K_5^-$ for some $r\in[1,t]$. 

If $H_r\cong K_5$ or $H_r\cong K_5^-$ and $x_{r-1}x_r\in E(H_r)$, then $H_r$ contains four $(x_{r-1},x_r)$-paths of lengths 1, 2, 3, 4, respectively, one of which, together with an $(x_r,x_{r-1})$-path with all internal vertices outside $H_r$, form a $(2\bmod 4)$-cycle in $G$, a contradiction. So we conclude that $H_r\cong K_5^-$ and $x_{r-1}x_r\notin E(H_r)$.

Now we see that $H_r$ contains three $(x_{r-1},x_r)$-paths of lengths 2, 3, 4, respectively. If there exists $i\in[1,t]\backslash\{r\}$ such that $H_i\in\{K_5,K_5^-,W_4\}\cup\{F_k: k\geq 0\}$, then $H_i$ contains three $(x_{i-1},x_i)$-paths of consecutive lengths. It follows that $G'$ contains five $(x_0,x_t)$-paths of consecutive lengths, and $G$ contains a $(2\bmod 4)$-cycle, a contradiction. So we conclude that $H_i\notin\{K_5,K_5^-,W_4\}\cup\{F_k: k\geq 0\}$ for any $i\in[1,t]\backslash\{r\}$. Thus $e(H_i)\leq 2n_i-3$ for any $i\in[1,t]\backslash\{r\}$ by the induction hypothesis. 

Now we have
\[ 
2n-4\le e(G')=\sum_{i=1}^{t}e(H_i)\le 9+\sum_{i\in[1,t]\backslash\{r\}}(2n_i-3)=\sum_{i=1}^{t}(2n_i-3)+2=2n-t-2.
\]
It follows that $t=2$, i.e., $G'$ has exactly two blocks $H_1,H_2$. We can assume without loss of generality that $H_r=H_1$. Since the equality holds in the above inequality, we see that $e(H_2)=2n_2-3$.

If $x_1x_2\in E(H_2)$, then an $(x_0,x_1)$-path in $H_1$ of length 3, together with $x_1x_2vx_0$, form a 6-cycle in $G$, a contradiction. So we conclude that $x_1x_2\notin E(H_2)$. If $H_2+x_1x_2$ contains a $(2\bmod 4)$-cycle, then the cycle passes through $x_1x_2$ and $H_2$ contains an $(x_1,x_2)$-path of lenth $(1\bmod 4)$. Together with $x_2vx_0$ and an $(x_0,x_1)$-path in $H_1$ of length 3, we find a $(2\bmod 4)$-cycle in $G$, a contradiction. So we conclude that $H_2+x_1x_2$ contains no $(2\bmod 4)$-cycles. 

By the induction hypothesis, $n_2\geq 4$ and $H_2+x_1x_2\in\{W_4\}\cup\{F_k: k\geq 0\}$. We notice that each edge of $W_4$ or $F_k$ is contained in a triangle. This implies that $H_2$ contains an $(x_1,x_2)$-path of length 2. Together with $x_2vx_0$ and an $(x_0,x_1)$-path in $H_1$ of length 2, we find a 6-cycle in $G$, a contradiction. 
\end{proof}

Finally, we propose the following conjecture for the general cases.
\begin{conjecture}
For $k\geq 3$ and $n$ large enough, 
    \[
        ex_2(n,\mathcal{C}_{2\bmod 2k})=\binom{k}{2}+k(n-k)+1.
    \] 
The extremal graph is $K_k\vee (K_2\cup (n-k-2)K_1)$.
\end{conjecture}

\section*{Acknowledgement}

The research of Yandong Bai and Binlong Li was supported by the National Key Research and Development Program of China (No. 2026YFE0151700), National Natural Science Foundation of China (Nos. 12542043, 12242111, 12131013), and Guangdong Basic and Applied Basic Research Foundation (No. 2023A1515030208). 
Hojin Chu was supported by a KIAS Individual Grant (CG101801) at Korea Institute for Advanced Study. Boram Park was supported by the National Research Foundation of Korea (NRF) grant funded by the Korea government (No. RS-2025-00523206), and supported by the New Faculty Startup Fund from Seoul National University.

\end{spacing}
\end{document}